 \theoremstyle{plain}
 \newtheorem{theorem}{Theorem}[section]
 \newtheorem{lemma}[theorem]{Lemma}
 \newtheorem{corollary}[theorem]{Corollary}
 \theoremstyle{definition}
 \newtheorem{definition}[theorem]{Definition}
 \theoremstyle{remark}
 \newtheorem*{remark}{Remark}
 \title{Gravitational instantons with faster than quadratic curvature decay (III)}
 \author{Gao Chen and Xiuxiong Chen}
\begin{document}

 \maketitle

 \tableofcontents

 \section{Introduction}
  This is our third paper in a series on the gravitational instantons.  In this paper, we classify ALG and ALH gravitational instantons. In ALG case, we extend Hein's construction slightly and show that it's the only ALG gravitational instanton. In ALH case, we prove a Torelli-type theorem.

  There are lots of different definitions of gravitational instantons. As in our previous work \cite{FirstPaper} \cite{SecondPaper}, we choose the following one: A noncompact complete hyperk\"ahler manifold $M$ of real dimension 4 is called a gravitational instanton if the curvature at $x$ satisfies $|\mathrm{Rm}(x)|=O(r(x)^{-2-\tau})$, where $r(x)$ denotes the distance to a fixed point in $M$, $\tau$ is any small positive number. It's worthwhile to notice that in real dimension 4, the hyperk\"ahler condition $\mathrm{hol}\subset\mathrm{Sp}(1)$ is equivalent to the Calabi-Yau condition $\mathrm{hol}\subset\mathrm{SU}(2)$.

  It's quite easy to prove the following theorem:
  \begin{theorem}
  For gravitational instanton $M$, the following conditions are equivalent:

  (1) $M$ is flat;

  (2) $M$ has trivial holonomy;

  (3) $M$ splits as $\mathbb{R}^{4-k}\times \mathbb{T}^k$, $k=0,1,2,3$.
  \label{Flat-gravitational-instanton}
  \end{theorem}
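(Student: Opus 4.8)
The plan is to establish the cycle $(2)\Rightarrow(1)\Rightarrow(3)\Rightarrow(2)$; only $(1)\Rightarrow(3)$ carries real content, and it will also hand back $(1)\Rightarrow(2)$ as a byproduct. For $(2)\Rightarrow(1)$ I would simply invoke the Ambrose--Singer holonomy theorem: the curvature operators $\mathrm{Rm}(X,Y)$ take values in the Lie algebra of the holonomy group, so trivial holonomy forces $\mathrm{Rm}\equiv 0$. For $(3)\Rightarrow(2)$: a Riemannian product of a Euclidean space with a flat torus is flat, and parallel transport around any loop is the identity (on each factor it is a Euclidean translation, acting trivially on tangent vectors), so the holonomy is trivial; in particular such $M$ are flat, hence genuinely are gravitational instantons, so $(3)$ really describes a family inside the class under consideration.

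The main step, $(1)\Rightarrow(3)$, I would run as follows. By the Killing--Hopf theorem a connected complete flat Riemannian $4$--manifold is isometric to $\mathbb{R}^4/\Gamma$, with $\Gamma\subset\mathrm{Isom}(\mathbb{R}^4)=\mathbb{R}^4\rtimes\mathrm{O}(4)$ acting freely and properly discontinuously. Pull the hyperk\"ahler triple back to $\mathbb{R}^4$: the three complex structures are parallel, hence constant, and satisfy the quaternion relations, so after an orthogonal linear change of coordinates they become the standard structures $I_0,J_0,K_0$ on $\mathbb{R}^4\cong\mathbb{H}$ given by left multiplication by $i,j,k$. Since $\Gamma$ preserves them, the linear part of each $\gamma\in\Gamma$ centralizes $\{I_0,J_0,K_0\}$ inside $\mathrm{O}(4)$, and that centralizer is exactly the group $\mathrm{Sp}(1)$ of right multiplications $R_{\bar q}\colon x\mapsto x\bar q$, $q\in\mathrm{Sp}(1)$.

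Now comes the key observation. Writing $\gamma(x)=R_{\bar q}x+b$ with $q\in\mathrm{Sp}(1)$: if $q\ne1$ then $R_{\bar q}$ has no nonzero fixed vector --- from $x\bar q=x$ with $x\ne0$ we would get $\bar q=x^{-1}x=1$ --- so $1$ is not an eigenvalue of $R_{\bar q}$, hence $R_{\bar q}-\mathrm{Id}$ is invertible and $\gamma$ has the fixed point $x_0=(R_{\bar q}-\mathrm{Id})^{-1}(-b)$, contradicting freeness of the action. Therefore $q=1$ for every $\gamma\in\Gamma$, i.e.\ $\Gamma=\Lambda$ is a discrete subgroup of $(\mathbb{R}^4,+)$ of some rank $k$. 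Noncompactness of $M=\mathbb{R}^4/\Lambda$ forces $k\le3$, and setting $V=\Lambda\otimes\mathbb{R}$ the orthogonal splitting $\mathbb{R}^4=V\oplus V^\perp$ descends to an isometric splitting $M\cong(V/\Lambda)\times V^\perp\cong\mathbb{T}^k\times\mathbb{R}^{4-k}$. Since $\Gamma$ now has trivial linear part, the holonomy of $M$ is trivial, which also yields $(1)\Rightarrow(2)$.

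I expect the only genuine obstacle to be the rigidity step: correctly identifying the stabilizer of the flat hyperk\"ahler structure on $\mathbb{R}^4$ with $\mathbb{R}^4\rtimes\mathrm{Sp}(1)$, and then exploiting that a nonidentity element of this rotational $\mathrm{Sp}(1)$ fixes no nonzero vector, so a screw motion with nontrivial rotational part always has an honest fixed point and is excluded by freeness of the deck action. Everything else (Killing--Hopf, Ambrose--Singer) is standard. One point to flag in the write-up: the splitting in $(3)$ should be asserted only as a splitting of smooth (equivalently Riemannian) manifolds, since for $k=1,2,3$ the constant complex structures need not preserve $V$, so the hyperk\"ahler structure itself does not split off as a product.
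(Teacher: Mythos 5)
Your proposal is correct and follows essentially the same route as the paper: both reduce to $M=\mathbb{R}^4/\Gamma$ and show the deck transformations must be pure translations because their rotational parts preserve the constant hyperk\"ahler triple, hence lie in $\mathrm{Sp}(1)$, which acts freely on $\mathbb{R}^4\setminus\{0\}$, so a nontrivial rotational part would force a fixed point. The only difference is organizational: the paper cites the argument of Theorem 3.4 of its first paper for ``flat $\Rightarrow$ trivial holonomy'' and then concludes the splitting, whereas you prove $(1)\Rightarrow(3)$ self-containedly (Killing--Hopf, centralizer $=\mathrm{Sp}(1)$, fixed-point argument) and obtain trivial holonomy as a byproduct; your closing caveat that the splitting is only Riemannian, not hyperk\"ahler, is a correct and worthwhile remark.
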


  It will be proved in Section 3. For simplicity, in this paper, we will exclude the flat gravitational instantons.

  In our first paper \cite{FirstPaper}, according to different kinds of asymptotic geometries, we classified gravitational instantons into the following categories: ALE (Asymptotically Locally Euclidean), ALF-$A_k$, ALF-$D_k$ (Asymptotically Locally Flat), ALG and ALH (``G" and ``H" are the letters after ``E" and ``F"). Its unique tangent cone at infinity is $\mathbb{C}^2/\Gamma$, $\mathbb{R}^3$, $\mathbb{R}^3/\mathbb{Z}_2$, a flat cone with cone angle $2\pi\beta$ or $\mathbb{R}^+$, respectively.

  In the ALE case, after Bando-Kasue-Nakajima's work \cite{BandoKasueNakajima} about the improvement of asymptotic rate, Kronheimer \cite{Kronheimer1} \cite{Kronheimer2} proved that any ALE gravitational instanton must be diffeomorphic to the minimal resolution $\widetilde{\mathbb{C}^2/\Gamma}$ of the quotient singularity $\mathbb{C}^2/\Gamma$, where $\Gamma$ is a finite subgroup of $\mathrm{SU}(2)$. Moreover, the Torelli theorem holds for ALE gravitational instantons.

  $H_2(\widetilde{\mathbb{C}^2/\Gamma},\mathbb{Z})$ is generated by holomorphic curves with self intersection number -2. Let $k$ be the number of generators. Then, their intersection patterns can be classified into $A_k(k\ge 1), D_k(k\ge 4), E_k(k=6,7,8)$ Dynkin diagrams. They correspond to different types of $\Gamma$.

  Later, Minerbe \cite{Minerbe} proved that the multi-Taub-NUT metric is the only ALF-$A_k$ gravitational instanton. When $k=0$, it's called the Taub-NUT metric. The Taub-NUT metric is diffeomorphic to $\mathbb{C}^2$. When $k\ge1$, the ALF-$A_k$ gravitational instanton is diffeomorphic to the ALE-$A_k$ gravitational instanton.

  In ALF-$D_k$ case, Biquard and Minerbe \cite{BiquardMinerbe} proved that $k$ must be nonnegative. Ivanov and Ro\v{c}ek \cite{IvanovRocek} conjectured a formula using generalized Legendre transform developed by Lindstr\"om and Ro\v{c}ek \cite{LindstromRocek}. This conjecture was proved by Cherkis and Kapustin \cite{CherkisKapustin}. A more explicit formula was computed by Cherkis and Hitchin \cite{CherkisHitchin}. In our second paper \cite{SecondPaper}, we proved that it's the only possible ALF-$D_k$ gravitational instanton. When $k=0$, it's called the Atiyah-Hitchin metric \cite{AtiyahHitchin}. When $k=2$, it's called the Page-Hitchin metric \cite{Page} \cite{HitchinPage}. When $k\ge 4$, the ALF-$D_k$ gravitational instanton is diffeomorphic to the ALE-$D_k$ gravitational instanton. As a corollary of the classification result in ALF case, the Torelli theorem holds for ALF gravitational instantons \cite{SecondPaper}.

  In the ALG and ALH cases, we \cite{FirstPaper} proved a compactification result and thus confirmed a conjecture of Yau \cite{Yau}:

  \begin{theorem}
  (\cite{FirstPaper})
  For any ALG or ALH gravitational instanton $M$, there exists a compact elliptic surface $\bar M$ with a meromorphic function $z:\bar M\rightarrow\mathbb{CP}^1$ whose generic fiber has genus 1. The fiber $D=\{z=\infty\}$ is regular if $M$ is ALH, while it's either regular or of type I$_0^*$, II, II$^*$, III, III$^*$, IV, IV$^*$ if $M$ is ALG. There exists an $(a_1,a_2,a_3)$ in $\mathbb{S}^2$ such that when we use $a_1I+a_2J+a_3K$ as the complex structure, $M$ is biholomorphic to $\bar M\setminus D$.
  \label{Compactification}
  \end{theorem}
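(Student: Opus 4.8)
The plan is to pass from the metric description of the end to a holomorphic one, to identify the end with a neighborhood of a Kodaira fiber minus that fiber, and then to glue. The input is the refined asymptotic structure of the end: $M$ has a single end $E$, which in the ALG case is a $2$-torus fibration over the complement of a compact set in a flat cone of angle $2\pi\beta$ (carrying a semi-flat model metric), and in the ALH case is asymptotic to a flat cylinder $[1,\infty)\times T^3$; in both cases the metric converges to this flat model at a polynomial rate, $M$ has finite topology, and the geometry is bounded off $E$. The flat model distinguishes one member $\mathcal I$ of the hyperk\"ahler sphere $\{a_1I+a_2J+a_3K:(a_1,a_2,a_3)\in\mathbb{S}^2\}$ --- the one for which both the torus fibers and the base are holomorphic --- together with a holomorphic function $w$, an affine coordinate on the base, whose fibers are elliptic curves. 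The first and hardest step is to select, within the hyperk\"ahler sphere of $M$, the complex structure $\mathcal I$ asymptotic to the model one, and then to show that $w$ perturbs to an honest $\mathcal I$-holomorphic map $w:E\to\Delta^*$ with elliptic fibers: one writes the candidate complex structure as a decaying perturbation of the flat model, solves the resulting integrability ($\bar\partial$-type) equation on $E$ using weighted H\"older estimates adapted to the collapsing end, and integrates $dw$. This realizes $E$ as a holomorphic torus fibration over the punctured disk.

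The periods of the fibers of $w$ then define a holomorphic multivalued function into the upper half-plane whose monodromy, as $w$ encircles $0$, lies in $\mathrm{SL}(2,\mathbb{Z})$. The hypothesis $|\mathrm{Rm}|=O(r^{-2-\tau})$ rules out parabolic (unipotent) monodromy, since such an end would be of ALG$^*$ type, whose curvature does not decay faster than quadratically; hence the monodromy has finite order $m\in\{1,2,3,4,6\}$, and $E$ is biholomorphic to $U\setminus D$, where $U$ is a standard neighborhood of a Kodaira fiber $D$ --- a smooth elliptic curve when $m=1$ (the ALH case, and the borderline ALG case) and a fiber of type $\mathrm{I}_0^*,\mathrm{II},\mathrm{III},\mathrm{IV},\mathrm{IV}^*,\mathrm{III}^*,\mathrm{II}^*$ when $m=2,6,4,3,3,4,6$, with $\beta=\tfrac12,\tfrac16,\tfrac14,\tfrac13,\tfrac23,\tfrac34,\tfrac56$ respectively, the value of $\beta$ being read off from the rate at which the holomorphic symplectic form degenerates along $D$. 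Gluing $U$ onto $M$ along $E$ produces a compact complex surface $\bar M$ containing $D$ with $D^2=0$; since $D$ is a fiber of a local fibration, the linear system $|nD|$ for suitable $n$ defines a genus-$1$ fibration $z:\bar M\to\mathbb{CP}^1$ extending $w$, so $\bar M$ is an elliptic surface and $M=\bar M\setminus D=\bar M\setminus\{z=\infty\}$ by construction. The vector $(a_1,a_2,a_3)$ in the statement is exactly the parameter of $\mathcal I$; it is forced, not free, being pinned down by the limit of the hyperk\"ahler triple at infinity, equivalently by requiring $dw$ to be of type $(1,0)$ with the fiber class holomorphic.

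The principal obstacle is the analytic heart of the first step: upgrading the purely metric ($C^\infty_{\mathrm{loc}}$) convergence of $E$ to the flat model into a holomorphic identification. Producing $w$ is a gluing/perturbation problem on a collapsing four-manifold, and the underlying linear theory --- invertibility of $\bar\partial$, or of the Hodge Laplacian, in weighted function spaces on $E$ --- is delicate precisely because the injectivity radius degenerates along the torus fibers, so the weights and the model operator must be chosen with care. A second essential point, more topological in nature, is the sharp exclusion of parabolic monodromy, which is what separates ALG and ALH from ALG$^*$; this is one place where the polynomial improvement $\tau>0$ over exactly quadratic curvature decay, rather than merely $\tau=0$, is genuinely used.
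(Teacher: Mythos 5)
The theorem you are proving is not actually proved in this paper: it is quoted from \cite{FirstPaper}, so your proposal has to be measured against the construction there. In outline you do follow the same route that \cite{FirstPaper} takes and that the present paper recalls: a hyperk\"ahler rotation singles out the complex structure $a_1I+a_2J+a_3K$ determined by the asymptotic model; weighted elliptic analysis on the end upgrades the metric asymptotics to a holomorphic elliptic fibration asymptotic to the model coordinate $u^{1/\beta}$; the fiber type at infinity is read off from $\beta$ (your table of $\beta$'s and monodromy orders is correct); and the compactification is performed by gluing in Kodaira's explicit local models (the $N_{+m}$ of \cite{KodairaEllipticSurface}, as recalled in the proof of the third part of Theorem \ref{Main-Theorem-2}). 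One remark: with Definition \ref{ALG-definition} as the hypothesis, the end is by assumption asymptotic to the flat model whose monodromy is the finite-order rotation $e^{\pm2\pi i\beta}$, so there is no parabolic case to exclude inside this theorem; ruling out ALG$^*$-type ends is part of the classification of asymptotic models in \cite{FirstPaper}, not of the compactification step, so that portion of your argument is misplaced (though harmless).

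The genuine gap is your last step. Gluing a fibered neighborhood $U$ of the Kodaira fiber $D$ onto the end only yields a compact complex surface $\bar M$ together with a fibration of a \emph{neighborhood} of $D$; the statement requires a global meromorphic function $z:\bar M\rightarrow\mathbb{CP}^1$, and your one-line appeal to the linear system $|nD|$ does not deliver it at this stage. Since $D\cdot K=0$ by adjunction, Riemann--Roch gives $\chi(\mathcal{O}(nD))=\chi(\mathcal{O}_{\bar M})$ for every $n$, so the growth of $h^0(nD)$ is not automatic; base-point freeness, algebraicity, even K\"ahlerness of $\bar M$ are unknown at this point. In fact $b_1(M)=0$ and the rationality of $\bar M$ are only established \emph{afterwards} (Theorem \ref{First-betti-number} and the theorem following it in Section 3), using the compactification theorem as input, so invoking such properties here would be circular. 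The construction of \cite{FirstPaper} avoids this: as the present paper recalls (Definition 2.11 and the proof of Theorem \ref{First-betti-number}), the $I$-holomorphic function $z$ is produced on all of $M$, asymptotic to $u^{1/\beta}$, by the weighted analysis, and it is proper; the elliptic fibration of $\bar M$ is then obtained directly by extending this globally defined $z$ across $D$, with no linear-system or surface-classification argument needed. To repair your proof you would either have to carry out this global analytic construction of $z$ on $M$ (not just on the end), or supply the missing input about $\bar M$ that would make an $|nD|$-type argument run.
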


  \begin{remark}
  The type of $D$ is related to the tangent cone at infinity of $M$. See the table in Definition \ref{ALG-model-definition}.
  \end{remark}

  In this paper, we will start from an improvement of the above theorem:

  \begin{theorem}
  The $(\bar M,z)$ in the above theorem must be a rational elliptic surface (See Definition \ref{Definition-rational-elliptic-surface}). Moreover, in the ALG case, $D$ can't be regular.
  \label{Main-Theorem-1}
  \end{theorem}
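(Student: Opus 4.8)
The whole statement comes down to computing the invariant $\chi(\mathcal{O}_{\bar M})$: the plan is to show $\chi(\mathcal{O}_{\bar M})=1$ with no multiple fibre in the ALH case and in the ALG case with $D$ singular, while the ALG case with $D$ regular forces $\chi(\mathcal{O}_{\bar M})=0$ and hence flatness, which we have excluded. By Theorem~\ref{Compactification}, $z\colon\bar M\to\mathbb{CP}^1$ is a compact elliptic surface with $M=\bar M\setminus D$. First, $\bar M$ is relatively minimal: a $(-1)$-curve inside a fibre over $\mathbb{C}$ would lie in $M$ and contradict adjunction, since the hyperk\"ahler holomorphic volume form $\Omega$ trivialises $K_M$; and $D$, being regular or one of the seven listed Kodaira types, has only $(-2)$-curve components. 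Hence $e(\bar M)=12\,\chi(\mathcal{O}_{\bar M})\ge 0$ (vanishing exactly when $\bar M$ has no singular fibre), and Kodaira's canonical bundle formula applies: $K_{\bar M}\cong z^{*}\mathcal{O}_{\mathbb{CP}^1}(\chi(\mathcal{O}_{\bar M})-2)\otimes\mathcal{O}_{\bar M}(\sum_{j}(m_j-1)F_j)$, where the $m_jF_j$ are the multiple fibres, necessarily over $\mathbb{C}$. Once $\chi(\mathcal{O}_{\bar M})=1$ with no multiple fibre is established, $p_g=q=P_2=0$, so $\bar M$ is rational by Castelnuovo, hence a rational elliptic surface in the sense of Definition~\ref{Definition-rational-elliptic-surface}.

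The crux is the divisor of $\Omega$ on $\bar M$, with respect to the complex structure supplied by Theorem~\ref{Compactification}. Since $\Omega$ is nowhere zero on $M$, $(\Omega)$ is supported on the components $D_i$ of $D$, and $K_{\bar M}=\mathcal{O}_{\bar M}((\Omega))$. The classes $[D_i]$ are linearly independent in $H^2(\bar M;\mathbb{Q})$ (their intersection form is negative semidefinite with kernel spanned by the fibre-multiplicity vector, and $[D]=z^{*}[\infty]\neq 0$), so $(\Omega)$ is determined by its cohomology class and equals $-mD$ for a nonnegative integer $m$; further $m\ge 1$, because $\int_M\Omega\wedge\bar\Omega$ is proportional to the volume of $M$, which is infinite. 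I would then pin down $m$ from the explicit flat asymptotic model of \cite{FirstPaper}: near infinity $\Omega$ agrees to leading order with the covariant-constant holomorphic $2$-form of the model, which is $\tfrac{dz}{z}\wedge d\zeta$ on the ALH model (half-cylinder times $E$: a simple pole along the smooth fibre $D$, so $m=1$), is $dz\wedge d\zeta$ on the ALG model with $D$ regular (the product $\mathbb{C}\times E$: a double pole along the smooth fibre $D$, so $m=2$), and is $d\tilde z\wedge d\zeta$ on the appropriate finite cyclic cover $\mathbb{C}\times E$ of the ALG model with $D$ singular; in that last case one descends the form to the quotient and pulls it back to the minimal resolution that produces $D$, obtaining again $m=1$ (for type $\mathrm{I}_0^*$, say, a short $A_1$-resolution computation gives a double pole on the multiplicity-$2$ component and a simple pole on each multiplicity-$1$ component, i.e.\ exactly $(\Omega)=-D$). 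Thus $K_{\bar M}\cong z^{*}\mathcal{O}(-1)$ in the ALH and singular-ALG cases and $K_{\bar M}\cong z^{*}\mathcal{O}(-2)$ in the regular-ALG case.

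Comparing with the canonical bundle formula and taking degrees along a multisection of $z$ gives $\chi(\mathcal{O}_{\bar M})+\sum_{j}\tfrac{m_j-1}{m_j}=2-m$. When $m=1$ the right-hand side is $1$, and since $\chi(\mathcal{O}_{\bar M})\ge 0$ we get either $\chi(\mathcal{O}_{\bar M})=1$ with no multiple fibre, or $\chi(\mathcal{O}_{\bar M})=0$; the latter makes $\bar M$ free of singular fibres, which is absurd when $D$ is singular, and in the ALH case yields $e(M)=e(\bar M)-e(D)=0$, whence (the Chern-Gauss-Bonnet boundary term vanishing for the cylindrical ALH end, where the boundary slices are flat tori with second fundamental form tending to $0$ and the ambient curvature decays) $\int_M|\mathrm{Rm}|^2=0$ and $M$ is flat, contrary to our convention. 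So $\chi(\mathcal{O}_{\bar M})=1$ with no multiple fibre, and $\bar M$ is a rational elliptic surface. When $m=2$ the right-hand side is $0$, forcing $\chi(\mathcal{O}_{\bar M})=0$, no multiple fibre, hence no singular fibre, so $e(M)=0$; the analogous Chern-Gauss-Bonnet computation for this end (slices $S^1_r\times T^2$, second fundamental form of rank one and size $O(r^{-1})$) again gives $\int_M|\mathrm{Rm}|^2=0$, so $M$ would be flat, excluded. (Equivalently, $\chi(\mathcal{O}_{\bar M})=0$ makes $\bar M$ a principal $E$-bundle over $\mathbb{CP}^1$ and $M\cong\mathbb{C}\times E$ biholomorphically, whose universal cover has Euclidean volume growth and curvature decaying to $0$, hence is flat by \cite{BandoKasueNakajima}.) Therefore $D$ cannot be regular in the ALG case, completing the argument.

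The main obstacle is the determination of $m$ for the singular ALG fibres: this requires carrying the covariant-constant form $d\tilde z\wedge d\zeta$ through the finite quotient and its minimal resolution for each of the seven Kodaira types while tracking component multiplicities, with the $r^2$ volume growth of an ALG end serving as a consistency check on the answer. Lesser points needing care are the verification that the compactification of \cite{FirstPaper} is already relatively minimal and carries no multiple fibre over $\mathbb{C}$ --- i.e.\ that the metric asymptotics near an interior fibre are incompatible with orbifold multiplicity --- and the precise evaluation of the Chern-Gauss-Bonnet boundary contributions used in the degenerate cases.
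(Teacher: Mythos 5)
Your proposal is essentially correct in outline but follows a genuinely different route from the paper. The paper splits the statement in two: it rules out a regular fibre $D$ in the ALG case by an analytic argument (if $\beta=1$ then $\mathrm{d}z$ is asymptotically parallel, an integration-by-parts argument makes it exactly parallel, and a nonzero parallel $1$-form forces trivial holonomy, contradicting non-flatness), and it proves rationality by first showing $H^1(M)=0$ (Melrose's theory in the ALH case, weighted elliptic theory and harmonic expansions in the ALG case), then Mayer--Vietoris to get $H^1(\bar M)=0$, the meromorphic extension of $\omega^+$ with pole along $D$ to get $p_g=0$ and $c_1^2=0$, and finally Kodaira's classification, Castelnuovo, and the Betti-number count $b^+=1$, $b^-=9$. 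You instead extract everything from the polar divisor of $\Omega$ together with Kodaira's canonical bundle formula: relative minimality via adjunction, $(\Omega)=-mD$ with $m=1$ (ALH and singular ALG) or $m=2$ (regular ALG), hence $\chi(\mathcal{O}_{\bar M})+\sum(m_j-1)/m_j=2-m$, and the degenerate cases $\chi(\mathcal{O})=0$ (including the whole regular-ALG case) are killed by Chern--Gauss--Bonnet with vanishing boundary term, i.e.\ $e(M)=0$ forces flatness. This buys you a proof that never needs $b_1(M)=0$ (you get $q=0$ from $\chi(\mathcal{O})=1$ and $p_g=0$), and a purely topological/complex-geometric exclusion of regular $D$, at the price of a heavier reliance on the precise structure of the compactification: your ``main obstacle'' --- verifying that $\omega^+$ extends meromorphically with polar divisor exactly $D$ (with its Kodaira multiplicities) through the quotient-and-resolution construction for all seven types --- is exactly the step the paper also only asserts (``a careful examination of our construction \ldots and Kodaira's paper''), so you are not worse off there, but it is genuinely where the work lies. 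Two smaller repairs: the parenthetical argument that $(\Omega)=-mD$ ``is determined by its cohomology class'' is not right as stated --- use Zariski's lemma together with $K\cdot D_i=0$ (adjunction on the $(-2)$-curve components, resp.\ $p_a=1$ for types II/III/IV components), or simply rely on your direct local computation; and ``taking degrees along a multisection'' presupposes a multisection, which is not automatic before algebraicity is known --- it is cleaner to note that $\mathcal{O}(\sum(m_j-1)F_j)\cong z^*\mathcal{O}(2-m-\chi)$, push forward by $z$, and use that an effective vertical divisor linearly equivalent to a pullback must be a union of full fibres, forcing $m_j=1$ and $\chi=2-m$ directly.
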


  In \cite{Hein}, Hein constructed lots of ALG examples. In this paper, we will slightly modify his construction and then prove that any ALG gravitational instanton must be obtained by the modified Hein's construction:

  \begin{theorem}
  (1) Let $(\bar M,z)$ be a rational elliptic surface with $D=\{z=\infty\}$ of type I$_0^*$, II, II$^*$, III, III$^*$, IV, or IV$^*$. Let $\omega^+=\omega^2+i\omega^3$ be a rational 2-form on $\bar M$ with $[D]=\{\omega^+=\infty\}$. For any K\"ahler form $\omega$ on $\bar M$, there exists a real smooth polynomial growth function $\phi$ on $M=\bar M\setminus D$ such that $(M,\omega^1=\omega+i\partial\bar\partial\phi,\omega^2,\omega^3)$ is an ALG gravitational instanton.

  (2) The form $\omega+i\partial\bar\partial\phi$ in the first part is uniquely determined by its asymptotic geometry.

  (3) Given any ALG gravitational instanton, we can write it as $(M,\omega^1,\omega^2,\omega^3)$ after a hyperk\"ahler rotation which replace $a_1I+a_2J+a_3K$ in Theorem \ref{Compactification} by $I$. Then $\omega^+=\omega^2+i\omega^3$ is a rational 2-form on $\bar M$ with $[D]=\{\omega^+=\infty\}$. There exist a K\"ahler form $\omega$ on $\bar M$ and a real smooth polynomial growth function $\phi$ on $M=\bar M\setminus D$ such that $\omega^1=\omega+i\partial\bar\partial\phi$. When $D$ is of type II$^*$, III$^*$, or IV$^*$, we may need a new choice of $\bar M$ to achieve this.
  \label{Main-Theorem-2}
  \end{theorem}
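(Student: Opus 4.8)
The plan is to reduce all three statements to a single complex Monge--Amp\`ere problem on $M=\bar M\setminus D$. On a complex surface a holomorphic $(2,0)$-form $\omega^+$ satisfies $\omega^+\wedge\omega^+=0$ and $\omega^1\wedge\omega^+=0$ for every $(1,1)$-form $\omega^1$, and a holomorphic $2$-form is automatically closed; hence, once the complex structure is the $I$ of $\bar M$ and $\omega^2+i\omega^3=\omega^+$ is a fixed rational symplectic form, a K\"ahler form $\omega^1$ on $M$ completes a hyperk\"ahler triple precisely when the volume normalization holds. So part (1) amounts to solving, for a given K\"ahler form $\omega$ on $\bar M$,
\[
(\omega+i\partial\bar\partial\phi)^2 \;=\; \tfrac{1}{2}\,\omega^+\wedge\overline{\omega^+}\;=\;e^{F}\,\omega^2,
\]
with $e^{F}$ positive and blowing up along $D$ at a rate fixed by the pole order of $\omega^+$, i.e.\ by the Kodaira type of $D$; that blow-up is exactly what produces the ALG end with cone angle $2\pi\beta$ recorded in Definition~\ref{ALG-model-definition}.

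For part (1) I would first build a reference K\"ahler metric on $M$ with the correct ALG geometry near $D$: the semi-flat metric of Greene--Shapere--Vafa--Yau associated to the fibration $z$ and to the flat fibre metrics normalized by $\omega^+$ on the smooth part, glued near $D$ to the explicit ALG model metric, the finite monodromy around $D$ producing the cone angle. Adjusting $\omega$ within its K\"ahler class so that $\omega-\omega_0=i\partial\bar\partial\psi_0$ with $\psi_0$ of polynomial growth, the equation becomes a Monge--Amp\`ere equation whose inhomogeneity, relative to $\omega_0$, decays polynomially in the ALG distance. I would solve it by the continuity method in weighted H\"older spaces as in Hein's construction \cite{Hein}: openness from linear elliptic theory, closedness from a $C^0$ estimate (Moser iteration or a Tian--Yau barrier, using the polynomial decay of the inhomogeneity and the $r^2$ volume growth) together with Evans--Krylov and Schauder estimates. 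The ``slight modification'' of \cite{Hein} is that this is run for \emph{every} K\"ahler class on $\bar M$ and every rational symplectic $\omega^+$ with polar set $D$; the polynomial growth of $\phi$ comes from the weight at which the linearized operator is invertible.

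Part (2) is a Liouville-type uniqueness. If two Ricci-flat K\"ahler forms $\omega^1_0,\omega^1_1$ on $M$ share $\omega^+$ and the same asymptotic geometry, then unwinding what the asymptotic model encodes --- its finitely many parameters and the sub-leading terms of the asymptotic expansion, which pin down the remaining periods over $H_2(M)$ --- forces $[\omega^1_1]=[\omega^1_0]$ in $H^2(M;\mathbb{R})$. Hence $\omega^1_1=\omega^1_0+i\partial\bar\partial\psi$ with $i\partial\bar\partial\psi$ decaying and, by a weighted $\partial\bar\partial$-lemma, $\psi$ decaying; the two Monge--Amp\`ere equations then force $(\omega^1_0+i\partial\bar\partial\psi)^2=(\omega^1_0)^2$, and pairing with $i\partial\bar\partial\psi$ and integrating by parts over an exhaustion kills the boundary terms by ALG decay, giving $\nabla\psi\equiv0$, so $\psi$ is constant.

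Part (3), the converse, is where the real work lies. Rotate the given ALG instanton so its complex structure is $I$; Theorem~\ref{Compactification} gives $\bar M\supset D$, and Theorem~\ref{Main-Theorem-1} makes $\bar M$ a rational elliptic surface with $D$ of one of the seven non-regular types. Since $K_{\bar M}$ is minus the fibre class, the nowhere-vanishing holomorphic symplectic form $\omega^+$ on $M$ extends to a rational $2$-form on $\bar M$ with divisor supported on $D$, so $[D]=\{\omega^+=\infty\}$ and its local form near $D$ matches the ALG model of the relevant type. It remains to write $\omega^1=\omega+i\partial\bar\partial\phi$ with $\omega$ K\"ahler on $\bar M$ and $\phi$ of polynomial growth: (a) compare $H^2(M;\mathbb{R})$ with $H^2(\bar M;\mathbb{R})$ via the long exact sequence of the pair, using the configuration of components of $D$ fixed by its Kodaira type, to see which classes on $M$ lift and to push a lift of $[\omega^1]$ into the K\"ahler cone by kernel classes; (b) apply an ALG weighted $\partial\bar\partial$-lemma so that, the classes agreeing, $\omega^1-\omega|_M=i\partial\bar\partial\phi$ with $\phi$ polynomially bounded (it grows near $D$, where $\omega^1$ blows up while $\omega|_M$ stays bounded). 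The main obstacle is step (a) in the starred cases II$^*$, III$^*$, IV$^*$: there the fibre configuration is so rigid that the K\"ahler class of $\omega^1$ on $M$ need not extend to a K\"ahler class on the given $\bar M$, and one repairs this by passing to a birational rational elliptic surface isomorphic to $\bar M$ near $D$ on which the class becomes K\"ahler. Once $\bar M$ and $\omega$ are found, parts (1) and (2) identify the instanton with the output of the modified Hein construction, completing the classification.
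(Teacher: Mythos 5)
Your outline for part (1) follows the paper's general route (semi-flat reference metric near $D$, gluing, then a Tian--Yau/Hein Monge--Amp\`ere step), but it skips the two points that are exactly what must be added to Hein's construction to get the stated theorem. First, you assert ``adjusting $\omega$ within its K\"ahler class so that $\omega-\omega_0=i\partial\bar\partial\psi_0$ with $\psi_0$ of polynomial growth'' as if it were routine; this is the content of Theorem \ref{PartialBarPartialLemma}, and it is not automatic: one must choose the section $\sigma'$ of the fibration (so that the fiberwise average of the $\mathrm{d}\bar v$-component of the primitive vanishes) and solve $\bar\partial$-problems fiberwise and on the base to get a \emph{polynomially growing} potential and section. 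Second, and more seriously, you never address the normalization: the theorem demands $(\omega+i\partial\bar\partial\phi)^2=\tfrac12\,\omega^+\wedge\bar\omega^+$ for the \emph{given} $\omega$ and $\omega^+$, and a bounded/decaying solution of the Monge--Amp\`ere equation forces the integral compatibility $\int_M\bigl(\omega_{\mathrm{glued}}^2-\tfrac12\omega^+\wedge\bar\omega^+\bigr)=0$. Hein only obtains $\tfrac{\alpha}{2}\omega^+\wedge\bar\omega^+$ for large $\alpha$; the paper hits $\tfrac12$ exactly by deforming the background through the family $\omega_{s,t}$ (an $i\partial\bar\partial$ of a polynomially growing potential supported on the end) and an intermediate value argument before invoking Tian--Yau and Hein's Proposition 2.9. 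A weighted continuity method aiming directly at a decaying $\phi$, as you propose, is obstructed precisely by this integral. (For part (2), note also that the paper does not, and need not, claim that the asymptotics pin down the cohomology class: both metrics are already of the form $\omega+i\partial\bar\partial\phi_j$ with the same $\omega$, and uniqueness follows from the harmonic expansion of $\phi_7-\phi_8$ plus the maximum principle, showing the difference is a pluriharmonic polynomial; your ``weighted $\partial\bar\partial$-lemma giving a decaying $\psi$'' is unproven as stated.)

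In part (3) your reduction to ``lift $[\omega^1]$ to $H^2(\bar M)$, push into the K\"ahler cone, then apply an ALG weighted $\partial\bar\partial$-lemma'' replaces the paper's analytic core with an unestablished lemma, and your diagnosis of the starred cases is not the actual mechanism. On a noncompact $M$, agreement of classes does not yield $\omega^1-\omega=i\partial\bar\partial\phi$ with polynomially bounded $\phi$ without real work; the paper instead compares \emph{both} $\omega_{\mathrm{ALG}}$ and a K\"ahler form $\omega_0$ on $\bar M$ to semi-flat metrics $\omega_{\mathrm{sf},a}[\sigma'']$ and $\omega_{\mathrm{sf},a}[\sigma']$, and the crux is whether the fiberwise translation $T(z,v)=(z,v+\sigma''(z)-\sigma'(z))$ extends holomorphically across $D$. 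Proving this requires a decay estimate on $\sigma''-\sigma'$ (via harmonic expansions of $\omega_{\mathrm{sf},a}[\sigma']-\omega_{\mathrm{ALG}}$, whose difference is $O(|u|^{-2})$ by the previous papers), then removable singularities on Kodaira's nonsingular models and Hartogs after blowing down; only then does one get the K\"ahler form on $\bar M$ as $\omega_{\mathrm{ALG}}+i\partial\bar\partial((1-\chi)\phi_{11}+t\phi_5)$. The failure for II$^*$, III$^*$, IV$^*$ is not that ``the K\"ahler class need not extend'': it is that when $1/2<\beta<1$ the translation function acquires a pole at $\tilde z=0$ (coming from the admissible term $u^{1/\beta-2}\mathrm{d}u\wedge\mathrm{d}\bar v$ in the difference of the metrics), and the remedy is to \emph{re-compactify} $M$ using the shifted section $\sigma+\sigma''-\sigma'$ as zero section, producing a possibly different $\bar M$ that agrees with the old one on $M$ (not ``near $D$''). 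As written, your step (a)--(b) would not produce the required polynomially bounded potential, and your proposed fix in the starred cases does not address the actual obstruction.
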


  It's interesting to notice that in \cite{BiquardMinerbe}, Biquard and Minerbe constructed ALF-$D_k (k\ge 4)$, ALG (I$_0^*$, II, III, IV) and ALH gravitational instantons on the minimal resolutions of the quotient of Taub-NUT metric by the binary dihedral group, $(\mathbb{R}^2\times\mathbb{T}^2)/\mathbb{Z}_k (k=2,6,4,3)$ or $(\mathbb{R}\times\mathbb{T}^3)/\mathbb{Z}_2$, respectively. The three cases ALG-II$^*$, ALG-III$^*$, ALG-IV$^*$ are all missing.

  When $D$ is of type I$_b (b=1,2,...9)$ or I$_b^* (b=1,2,3,4)$, Hein \cite{Hein} also constructed some hyperk\"ahler metrics on $\bar M\setminus D$. Since they don't have fast enough curvature decay rates, we exclude them from the discussion. However, they are still very important because Cherkis and Kapustin \cite{CherkisKapustinALG} predicted complete hyperk\"ahler metrics on the moduli space of periodic monopoles, which is a rational elliptic surface minus a fiber of type I$_0^*$, I$_1^*$, I$_2^*$, I$_3^*$, I$_4^*$. They are also related to the moduli space of solutions of Hitchin equations on a cylinder. Notice that they are called ALG-$D_4, D_3, D_2, D_1, D_0$ by Cherkis-Kapustin but certainly their definition is different from our definition. Thus, we suggest the notation ALG$^*$ to denote Hein's exceptional examples. In \cite{FoscoloGluing} \cite{FoscoloModulispace}, Cherkis-Kapustin's prediction was partially verified by Foscolo. He proved that the moduli space of periodic monopoles is a non-empty hyperk\"ahler manifold. However, it's still unknown whether the metric is complete or whether it's an elliptic surface.

  It's worthwhile to notice that Biquard and Boalch \cite{BiquardBoalch} proved that the moduli space of meromorphic connections on a curve is a complete hyperk\"ahler manifold. In Boalch's previous work \cite{Boalch}, he related such moduli space to the Painlev\'e equation. Following Okamoto's work \cite{OkamotoI} \cite{OkamotoII} \cite{OkamotoIII} \cite{OkamotoIV}, Sakai \cite{Sakai} related the Painlev\'e equation to a rational elliptic surface $\bar M$ minus a fiber $D$. The type of the fiber $D$ is related to a Dynkin diagram:

    \begin{center}
  \begin{tabular}{|c|c|c|c|c|c|c|c|c|c|c|}
  \hline
  I$_0^*$ & I$_1^*$ & I$_2^*$ & I$_3^*$ & I$_4^*$ & II & II$^*$ & III & III$^*$ & IV & IV$^*$\\

  $D_4$ & $D_3$ & $D_2$ & $D_1$ & $D_0$ & $E_8$ & $A_0$ & $E_7$ & $A_1$ & $E_6$ & $A_2$\\
  \hline
  \end{tabular}
  \end{center}

  It's not known whether the Biquard-Boalch's metric is ALG or ALG$^*$. However, it's known that an open part of Biquard-Boalch's metric is diffeomorphic to the corresponding ALE/ALF gravitational instanton denoted by the same Dynkin diagram. See \cite{BoalchEk} and \cite{BoalchSurvey} for details.

  In the ALH case, as a corollary of Theorem \ref{Compactification} and \ref{Main-Theorem-1}, any ALH gravitational instantons are diffeomorphic to each other. In particular, they are diffeomorphic to the minimal resolution of $(\mathbb{R}\times\mathbb{T}^3)/\mathbb{Z}_2$ by \cite{BiquardMinerbe}. The torus $\mathbb{T}^3=\mathbb{R}^3/\Lambda$ is determined by the lattice $\Lambda=\mathbb{Z}v_1\oplus\mathbb{Z}v_2\oplus\mathbb{Z}v_3$. It's easy to see that $H_{2}(\widetilde{(\mathbb{R}\times\mathbb{T}^3)/\mathbb{Z}_2},\mathbb{R})=\mathbb{R}^{11}$ is generated by three faces $F_{jk}$ spanned by $v_j$ and $v_k$ and eight rational curves $\Sigma_j$ coming from the resolution of eight orbifold points in $(\mathbb{R}\times\mathbb{T}^3)/\mathbb{Z}_2$. Using those notations, we will prove the following classification result of ALH gravitational instantons:

  \begin{theorem}
  (Torelli theorem for ALH gravitational instantons)

  Let $M$ be the smooth 4-manifold which underlies the minimal resolution of $(\mathbb{R}\times\mathbb{T}^3)/\mathbb{Z}_2$.
  Let $[\alpha^1],[\alpha^2],[\alpha^3]\in H^2(M,\mathbb{R})$ be three cohomology classes which satisfy the nondegeneracy conditions:

  (1) The integrals $f_{ijk}$ of $\alpha^i$ on the three faces $F_{jk}$ satisfy
  $$\left|
  \begin{array}{ccc}
  f_{123}&f_{131}&f_{112} \\
  f_{223}&f_{231}&f_{212} \\
  f_{323}&f_{331}&f_{312}
  \end{array}
  \right|>0;$$

  (2) For each $[\Sigma]\in H_2(M,\mathbb{Z})$ with $[\Sigma]^2=-2$, there exists $i\in\{1,2,3\}$ with $[\alpha^i][\Sigma]\not=0$.

  Then there exists on $M$ an ALH hyperk\"ahler structure such that $\Phi$ in Definition \ref{ALH-definition} can be chosen to be the identity map and the cohomology classes of the K\"ahler forms $[\omega^i]$ are the given $[\alpha^i]$. It's unique up to tri-holomorphic isometries which induce identity on $H_2(M,\mathbb{Z})$.

  Moreover, any ALH gravitational instanton must be constructed by this way.
  \label{Main-Theorem-3}
  \end{theorem}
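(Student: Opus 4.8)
The plan is to run a continuity-method / deformation argument in the space of ALH hyperk\"ahler structures on the fixed smooth 4-manifold $M=\widetilde{(\mathbb{R}\times\mathbb{T}^3)/\mathbb{Z}_2}$, with the period map $[\omega^1],[\omega^2],[\omega^3]$ playing the role of the moment map. First I would set up the precise target space: a triple of classes $([\alpha^1],[\alpha^2],[\alpha^3])$ in $H^2(M,\mathbb{R})^{\oplus 3}$ satisfying nondegeneracy conditions (1) and (2) above. Condition (1) forces the periods on the three faces $F_{jk}$ to span the relevant $3\times 3$ configuration positively (so the asymptotic flat $\mathbb{T}^3$ and the splitting direction are genuinely there), and condition (2) rules out the existence of a holomorphic $(-2)$-sphere of vanishing volume in every compatible complex structure, which is exactly the obstruction that would produce an orbifold degeneration. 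The Biquard--Minerbe construction \cite{BiquardMinerbe} provides one base point in this space, giving a nonempty starting set for the continuity argument.

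Next I would establish the three pillars of the continuity method. \emph{Openness}: given an ALH gravitational instanton realizing a triple of period classes, nearby triples are realized; this follows from linearizing the hyperk\"ahler (Monge--Amp\`ere) equations with ALH-weighted H\"older or Sobolev spaces, using that the relevant Laplacian on $M$ with the weighted boundary behavior dictated by Definition \ref{ALH-definition} is an isomorphism — here Theorem \ref{Compactification} and Theorem \ref{Main-Theorem-1} guarantee the compactification is a rational elliptic surface minus a \emph{regular} fiber, so the asymptotic model is an exact flat cylinder $\mathbb{R}\times\mathbb{T}^3$ and the linear analysis is the standard cylindrical-end Fredholm theory. \emph{Closedness}: along a path of period classes staying in the nondegenerate region, one needs uniform curvature and volume bounds to prevent collapse or bubbling; condition (2) prevents a $(-2)$-curve from collapsing, condition (1) controls the asymptotic geometry, and the a priori estimates of \cite{Hein} (adapted to the cylindrical ALH model) give $C^\infty_{\mathrm{loc}}$ and weighted convergence, so the limit is again an ALH gravitational instanton with the limiting periods; the hard input here is ruling out loss of ALH asymptotics in the limit, which I expect to be the main obstacle. \emph{Uniqueness / injectivity of the period map}: if two ALH structures on $M$ have the same three period classes and the same asymptotic model, then one can interpolate the two hyperk\"ahler triples and run a Moser-type or $i\partial\bar\partial$ argument — the difference of K\"ahler potentials decays by the weighted maximal principle, forcing a tri-holomorphic isometry inducing the identity on $H_2(M,\mathbb{Z})$. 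The standard subtlety (as in Kronheimer's ALE Torelli theorem and in our ALF case \cite{SecondPaper}) is that a hyperk\"ahler rotation may be needed to align the complex structures before comparing, and that the mapping class group / monodromy of $H_2$ must be quotiented out, accounting for the "up to tri-holomorphic isometries inducing identity on $H_2$" clause.

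For the final sentence — that \emph{every} ALH gravitational instanton arises this way — I would argue as follows. Given an arbitrary ALH gravitational instanton $N$, Theorems \ref{Compactification} and \ref{Main-Theorem-1} realize it (after a hyperk\"ahler rotation) as a rational elliptic surface minus a regular fiber, hence diffeomorphic to $M$ by \cite{BiquardMinerbe} and the classification of rational elliptic surfaces; transporting the three K\"ahler classes of $N$ to $H^2(M,\mathbb{R})$ via this diffeomorphism $\Phi$ gives a triple $([\omega^1],[\omega^2],[\omega^3])$. This triple automatically satisfies (1) because the asymptotic flat $\mathbb{T}^3$ of $N$ has a nondegenerate metric, and satisfies (2) because $N$ is a smooth (non-orbifold) manifold with bounded geometry, so no holomorphic $(-2)$-sphere can have zero volume in all three K\"ahler forms simultaneously. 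Then the uniqueness half of the theorem identifies $N$ with the structure produced by the existence half for that triple. I would present conditions (1)–(2) as precisely the image of the period map, so the theorem reads as a bijection between nondegenerate period triples modulo the discrete symmetry group and ALH gravitational instantons modulo tri-holomorphic isometry; the bulk of the work, and the place where new estimates beyond \cite{Hein} are needed, is the closedness step together with the verification that the weighted linear theory on the cylindrical end is genuinely Fredholm with the expected cokernel spanned by the period functionals.
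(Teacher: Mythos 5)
Your existence outline is broadly aligned with the paper's Section 7: a continuity path of classes with the cross-section $\mathbb{T}^3$ fixed (Theorem \ref{Cross-section}), openness via the analytic existence theorem of \cite{HaskinsHeinNordstrom}, and closedness by excluding degeneration, with condition (2) ruling out bubbling of $(-2)$-classes and condition (1) controlling the asymptotic torus. But the paper reduces each segment to a single complex Monge--Amp\`ere problem (fixing $I,\omega^2,\omega^3$ and varying only the $I$-K\"ahler class), and the genuinely hard content of closedness --- the energy bound, the Demailly--Peternell--Schneider volume-concentration lemma, the Ruan-type local estimate on $\mathrm{tr}_{\omega_i}\omega_0$, a diameter bound, and a weighted Moser iteration to get the $L^\infty$ and then weighted $C^\infty$ bounds --- is exactly what you defer as ``the main obstacle'' without supplying an argument, so the existence half is an outline rather than a proof.

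The genuine gap is in the uniqueness half and in the ``moreover'' direction. Your proposed Moser-type or $i\partial\bar\partial$ interpolation cannot prove the Torelli statement: two ALH hyperk\"ahler structures on $M$ with equal period classes do not a priori share a complex structure (nor a common asymptotic identification), so their difference is not of the form $i\partial\bar\partial\phi$ for any potential, and what a weighted maximum principle yields is only uniqueness of the Monge--Amp\`ere potential for a \emph{fixed} $I$, fixed $\omega^2,\omega^3$ and fixed asymptotics --- a much weaker statement than injectivity of the period map up to tri-holomorphic isometry. The paper's mechanism is entirely different: it glues $M$ with itself along the cylindrical end to produce K3 surfaces (Theorem \ref{Gluing}), proves a quantitative local Torelli theorem on the glued manifold (Theorem \ref{Quantitative-local-Torelli}), matches the periods of the two glued families by adjusting the gluing parameters $(\rho_k,\Theta_k)$, invokes the compact K3 Torelli theorem (Theorem \ref{Torelli-K3}) to obtain tri-holomorphic isometries of the K3 surfaces inducing the identity on $H_2$, and then passes to the limit by Arzel\`a--Ascoli and a diagonal argument, using $\int_M|\mathrm{Rm}|^2=96\pi^2$ (Corollary \ref{Curvature-L2-bound}) to force the isometries to carry compact parts to compact parts. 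The same gluing plus K3 Torelli argument is also what proves that an \emph{arbitrary} ALH gravitational instanton satisfies condition (2); your justification --- that no holomorphic $(-2)$-sphere can have zero volume --- conflates homology classes of square $-2$ with holomorphic curves and does not establish $[\omega^i][\Sigma]\neq 0$ for some $i$. Without the doubling construction, or some substitute supplying a global Torelli input, both the uniqueness statement and the necessity of condition (2) remain unproved in your proposal.
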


  \begin{remark}
  Recently, Haskins, Hein and Nordstr\"om \cite{HaskinsHeinNordstrom} classified asymptotically cylindrical Calabi-Yau manifolds of complex dimension at least 3. In dimension 2, their analytic existence theorem (Theorem 4.1 of \cite{HaskinsHeinNordstrom}) still holds. However, when $\mathbb{T}^3$ doesn't split isometrically as $\mathbb{S}^1\times\mathbb{T}^2$, their geometric existence theorem (Theorem D of \cite{HaskinsHeinNordstrom}) fails due to the lack of background K\"ahler form in the cohomology class.
  \end{remark}

  \begin{remark}
  In \cite{Hein}, Hein proved that the space of ALH gravitational instantons module isometries is 30 dimensional. After adding 3 parameters of hyperk\"ahler rotations, the space of ALH gravitational instantons module tri-holomorphic isometries which induce identity on $H_{2}(M,\mathbb{Z})$ is 33 dimensional. Our Theorem \ref{Main-Theorem-3} is consistence with Hein's computation.
  \end{remark}

  It's interesting to compare Theorem \ref{Main-Theorem-3} with the Torelli theorem for ALE gravitational instantons \cite{Kronheimer1} \cite{Kronheimer2}, ALF gravitational instantons \cite{SecondPaper} as well as K3 surfaces, which was proved by Burns-Rapoport \cite{BurnsRapoport}, Todorov \cite{Todorov}, Looijenga-Peters \cite{LooijengaPeters} and Siu \cite{Siu}. It was reformulated by Besse in Section 12.K of \cite{Besse}. Anderson \cite{AndersonTorelli} also proved a version of Torelli theorem for K3 surfaces which allows orbifold singularities.

  \begin{theorem}
  (\cite{Besse})(Torelli theorem for K3 surfaces)

  Let $M$ be the smooth 4-manifold which underlies the minimal resolution of $\mathbb{T}^4/\mathbb{Z}_2$.
  Let $\Omega$ be the space of three cohomology classes $[\alpha^1],[\alpha^2],[\alpha^3]\in H^2(M,\mathbb{R})$ which satisfy the following conditions:

  (1) (Integrability) $$\int_{M}\alpha^i\wedge\alpha^j=2\delta_{ij}V.$$

  (2) (Nondegeneracy) For any $[\Sigma]\in H_2(M,\mathbb{Z})$ with $[\Sigma]^2=-2$, there exists $i\in\{1,2,3\}$ with $[\alpha^i][\Sigma]\not=0$.

  $\Omega$ has two components $\Omega^+$ and $\Omega^-$. For any $([\alpha^1],[\alpha^2],[\alpha^3])\in\Omega^+$, there exists on $M$ a hyperk\"ahler structure for which the cohomology classes of the K\"ahler forms $[\omega^i]$ are the given $[\alpha^i]$. It's unique up to tri-holomorphic isometries which induce identity on $H_2(M,\mathbb{Z})$.

  Moreover, any hyperk\"ahler structure on K3 surface must be constructed by this way.
  \label{Torelli-K3}
  \end{theorem}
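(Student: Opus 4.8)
\section*{Proof proposal for Theorem \ref{Torelli-K3}}

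The plan is to reduce the hyperk\"ahler statement to the classical complex-geometric global Torelli theorem and the surjectivity of the period map for K3 surfaces, together with Yau's solution of the Calabi conjecture \cite{Yau}. The bridge is the standard equivalence, valid in real dimension four, between a hyperk\"ahler structure and a triple consisting of a complex structure, a holomorphic symplectic form, and a Ricci-flat K\"ahler metric. First I would reinterpret the data: writing $\Omega:=\alpha^2+i\alpha^3\in H^2(M,\mathbb{C})$, the integrability conditions $\int_M\alpha^i\wedge\alpha^j=2\delta_{ij}V$ are exactly the Riemann-type relations $\int_M\Omega\wedge\Omega=0$ and $\int_M\Omega\wedge\bar\Omega>0$, together with the statement that $[\alpha^1]$ is orthogonal to $\alpha^2,\alpha^3$ and has positive square. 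Hence $[\Omega]$, up to scale, is a point of the K3 period domain $\{[\sigma]\in\mathbb{P}(H^2(M,\mathbb{C})):\int_M\sigma\wedge\sigma=0,\ \int_M\sigma\wedge\bar\sigma>0\}$. The two components $\Omega^+$ and $\Omega^-$ are distinguished by the orientation of the positive-definite three-plane $\mathrm{span}(\alpha^1,\alpha^2,\alpha^3)\subset H^2(M,\mathbb{R})$ (recall $H^2$ has signature $(3,19)$); exactly one orientation, say $\Omega^+$, is compatible with the fixed orientation of $M$ coming from the Kummer model $\widetilde{\mathbb{T}^4/\mathbb{Z}_2}$.

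Next I would produce the complex structure. By surjectivity of the period map (\cite{Todorov}, \cite{LooijengaPeters}, \cite{Siu}), there is a marked K3 surface whose period is $[\Omega]$; since all K3 surfaces are diffeomorphic and the marking fixes the lattice identification, this endows the fixed manifold $M$ with a complex structure $I$ carrying a holomorphic two-form whose class is proportional to $\Omega$. It then remains to promote $[\alpha^1]$ to a K\"ahler class for $I$. The K\"ahler cone of this complex K3 is the chamber of the positive cone inside the $(1,1)$-classes orthogonal to $\Omega$ cut out by the hyperplanes perpendicular to the effective $(-2)$-classes. The nondegeneracy condition (2) guarantees that $[\alpha^1]$ lies on no such hyperplane, and together with the positive-square and $\Omega^+$ orientation data one concludes that $[\alpha^1]$ lies in the K\"ahler cone, after, if necessary, composing the marking with the element of the Weyl (reflection) group carrying $[\alpha^1]$ into the fundamental chamber; such reflections act by isometries and do not affect the intrinsic uniqueness statement on $H_2(M,\mathbb{Z})$.

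With $I$ fixed, a holomorphic symplectic form in class $\Omega$, and $[\alpha^1]$ a K\"ahler class, Yau's theorem \cite{Yau} gives a unique Ricci-flat K\"ahler metric with K\"ahler form in $[\alpha^1]$; in real dimension four this is hyperk\"ahler, and after normalizing the holomorphic two-form one obtains a hyperk\"ahler structure whose three K\"ahler classes are precisely the given $[\alpha^i]$. Uniqueness up to tri-holomorphic isometry inducing the identity on $H_2(M,\mathbb{Z})$ follows from the global Torelli theorem \cite{BurnsRapoport}: two such structures share the same period and K\"ahler class, hence are related by an isomorphism acting trivially on cohomology. Conversely, the K\"ahler classes of any hyperk\"ahler structure on a K3 automatically satisfy (1), since these are the period relations, and satisfy (2) as well: if some $(-2)$-class $[\Sigma]$ were orthogonal to all three $\alpha^i$, then for the complex structure $I$ it would be of type $(1,1)$ and pair to zero with the K\"ahler class $[\alpha^1]$, yet by Riemann--Roch $[\Sigma]$ or $-[\Sigma]$ is effective, contradicting the strict positivity of a K\"ahler class on effective curves. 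The orientation is the standard one, so the triple lies in $\Omega^+$.

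The steps I expect to be most delicate are the precise placement of $[\alpha^1]$ in the K\"ahler cone via the effective $(-2)$-classes and the bookkeeping of the reflection group, since this is where the chamber structure of the positive cone and the nondegeneracy hypothesis interact most subtly; the genuinely deep inputs, surjectivity of the period map and the global Torelli theorem, I would invoke as the cited classical results underlying Besse's reformulation \cite{Besse}.
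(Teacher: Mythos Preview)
The paper does not prove Theorem~\ref{Torelli-K3}; it is quoted as a classical result, attributed to Besse's reformulation \cite{Besse} of the work of Burns--Rapoport \cite{BurnsRapoport}, Todorov \cite{Todorov}, Looijenga--Peters \cite{LooijengaPeters} and Siu \cite{Siu}, and is used as a black box in Section~6. So there is no ``paper's own proof'' to compare against. Your outline is the standard reduction of the hyperk\"ahler Torelli statement to the complex-analytic global Torelli theorem plus surjectivity of the period map plus Yau's theorem, which is exactly how the cited sources assemble the result; in that sense your approach is the expected one.

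One point in your sketch deserves tightening. You write that if $[\alpha^1]$ is not in the K\"ahler cone of the complex structure produced by surjectivity, you compose the marking with a Weyl reflection, and that ``such reflections act by isometries and do not affect the intrinsic uniqueness statement on $H_2(M,\mathbb{Z})$''. This is not quite the right way to phrase it: a $(-2)$-reflection does act nontrivially on $H_2(M,\mathbb{Z})$, so it changes the marking. The correct mechanism is that the fibre of the period map over $[\Omega]$ consists of several marked K3 surfaces, related precisely by such reflections, and the nondegeneracy condition (2) together with the orientation condition singles out the unique one whose K\"ahler cone contains $[\alpha^1]$. Once that marked K3 is fixed, the uniqueness clause (isometries inducing the identity on $H_2$) follows directly from global Torelli applied to that marking. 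This is a bookkeeping issue rather than a genuine gap, and you correctly flag it as the most delicate step.
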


  One may ask whether Torelli Theorem holds for ALG gravitational instantons. The answer is false at least when $D$ is of type II$^*$, III$^*$, or IV$^*$.

  \begin{theorem}
  When $D$ is of type II$^*$, III$^*$, or IV$^*$, there exist two different ALG gravitational instantons with same $[\omega^i]$.
  \label{Torelli-ALG}
  \end{theorem}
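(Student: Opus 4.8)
The plan is to exhibit, for each of the three fiber types, a nontrivial deformation of a generic ALG gravitational instanton that keeps all three cohomology classes $[\omega^i]\in H^2(M,\mathbb R)$ fixed; producing two distinct members of such a family is exactly the assertion of the theorem. The structural reason such deformations exist is that for $D$ of type II$^*$, III$^*$, IV$^*$ the group $H^2(M,\mathbb R)$ is abnormally small: from the long exact sequence of the pair $(\bar M,M)$, the kernel of the restriction $H^2(\bar M,\mathbb R)\to H^2(M,\mathbb R)$ is the sublattice spanned by the classes of the components of $D$, which is negative semidefinite of rank $9$, $8$, $7$ respectively, so $\dim H^2(M,\mathbb R)=1,2,3$ --- this is the ``$A_0,A_1,A_2$'' of the table above. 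Hence the period $([\omega^1],[\omega^2],[\omega^3])$ of such an instanton lies in a real vector space of dimension only $3,6,9$, while by Theorem \ref{Main-Theorem-2} the instantons depend on a Kähler class $[\omega]$ varying in the $10$-dimensional $H^2(\bar M,\mathbb R)$ together with $\omega^+$ and the complex structure of $\bar M$; even this crude count shows the period map is very far from injective.

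For $D$ of type II$^*$ one can make this explicit by a hyperkähler rotation. Since $H^2(M,\mathbb R)$ is one-dimensional the classes $[\omega^i]$ are all proportional to a fixed generator, so after an identification $H^2(M,\mathbb R)\cong\mathbb R$ they are described by a vector $v=(c_1,c_2,c_3)\in\mathbb R^3$. Let $R_\theta\in SO(3)$ be the rotation by $\theta$ about the axis $\mathbb Rv$ and form the hyperkähler-rotated structure $(I_\theta,J_\theta,K_\theta)=R_\theta\cdot(I,J,K)$, whose Kähler forms are $\omega^i_\theta=\sum_j (R_\theta)_{ij}\,\omega^j$. As $R_\theta$ fixes $v$ we have $[\omega^i_\theta]=[\omega^i]$ for all $i$, and the metric is unchanged, so $(M,g,I_\theta,J_\theta,K_\theta)$ is again an ALG gravitational instanton with $D$ of type II$^*$ and identical $[\omega^i]$. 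A tri-holomorphic isometry from the rotated structure back to $(M,g,I,J,K)$ inducing the identity on $H_2(M,\mathbb Z)$ would be an isometry $F$ of $(M,g)$ whose induced action on the three-dimensional space $\langle\omega^1,\omega^2,\omega^3\rangle$ of parallel self-dual $2$-forms is $R_\theta^{-1}$. For a generic ALG instanton there is no such $F$ once $\theta\not\equiv 0$: the only evident symmetry, the elliptic involution $(x,y)\mapsto(x,-y)$, acts (when it is an isometry) by the rotation of order two fixing the $\omega^1$-direction rather than $\mathbb Rv$, and genericity excludes any further isometries. Hence the two instantons are genuinely distinct.

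For $D$ of type III$^*$ or IV$^*$ the classes $[\omega^i]$ span $H^2(M,\mathbb R)$, so a single nonzero rotation can no longer fix them all, and one instead varies the Kähler class of the compactification. Keep $\bar M$ and $\omega^+=\omega^2+i\omega^3$ fixed and, for $|t|$ small, let $[\omega_t]=[\omega_0]+t\,\xi$ be a path in the Kähler cone of $\bar M$ with $\xi$ a nonzero class supported on $D$ that is not a multiple of the fiber class $F$ (possible, since the component lattice has rank $8$, resp.\ $7$, with radical $\mathbb RF$). Theorem \ref{Main-Theorem-2}(1) applied to each $\omega_t$ gives ALG instantons $(M,\omega^1_t=\omega_t+i\partial\bar\partial\phi_t,\omega^2,\omega^3)$ whose three classes on $M$ are independent of $t$ --- the first since $\xi$ dies in $H^2(M,\mathbb R)$, the other two since $\omega^+$ is fixed --- while the areas $\int_{\Sigma}\omega^1_t$ of the components $\Sigma$ of $D$ meeting $\xi$ do depend on $t$. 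It remains to see that the instantons at two values of $t$ are not tri-holomorphically isometric, and \emph{this is the main obstacle}: one must show that these component areas, though invisible to $([\omega^1],[\omega^2],[\omega^3])|_M$, do enter the asymptotic ALG model near infinity, which is an isometry invariant --- this is the geometric substance of the phrase ``we may need a new choice of $\bar M$'' in Theorem \ref{Main-Theorem-2}(3). Granting this and the uniqueness clause of Theorem \ref{Main-Theorem-2}(2), one concludes: were two of the $\omega^1_t$ to define isometric hyperkähler structures, they would share the same asymptotic geometry and the same periods and hence coincide, contradicting the change of the component areas. The real work is to prove that the component areas genuinely register in the asymptotic model for types II$^*$, III$^*$, IV$^*$, and to see why the analogous freedom is absent for the remaining fiber types, where $H^2(M,\mathbb R)$ is already large enough to encode it --- so that the Torelli property survives there.
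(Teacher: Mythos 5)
Your structural observation that $H^2(M,\mathbb{R})$ drops to dimension $1,2,3$ for II$^*$, III$^*$, IV$^*$ is a reasonable heuristic for why Torelli should fail, but neither of your two constructions proves the theorem, because in both the decisive step --- showing the two structures are genuinely different --- is missing. For the II$^*$ rotation: the rotated triple has the \emph{same} underlying metric $g$, so the entire burden is to rule out an isometry of $(M,g)$ acting on $\mathrm{span}\{\omega^1,\omega^2,\omega^3\}$ by $R_\theta$; your only argument is that ``genericity excludes any further isometries,'' which is not proved, and since the theorem is an existence statement you would in any case have to exhibit at least one concrete instanton for which this absence of isometries is verified. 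For III$^*$/IV$^*$ you concede the gap yourself: the claim that the areas of the components of $D$ ``register in the asymptotic model'' is exactly what is left unproved. Worse, those areas are not intrinsic invariants of the noncompact hyperk\"ahler manifold $(M,\omega^i)$ at all --- the components of $D$ are not cycles in $M$, so the numbers $\int_\Sigma\omega^1_t$ depend on the chosen compactification and extension --- and the appeal to the uniqueness clause of Theorem \ref{Main-Theorem-2} is empty until one first shows the asymptotic geometries differ, which is precisely the open issue. So as written you have candidate families with equal $[\omega^i]$, not two provably distinct instantons.

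The paper closes exactly this gap with a hard metric invariant: the curvature decay rate. It starts from a specific example of Hein (e.g. the IV$^*$ fiber on a rational elliptic surface birational to $(\mathbb{P}^1\times\mathbb{T}^2)/\mathbb{Z}_3$), whose curvature decays like $r^{-1/\beta-4}$, and deforms it by adding $t\,\mathrm{d}(\mathrm{Re}\,h)$ to $\omega^2$, where $h$ is the harmonic $(0,1)$-form asymptotic to $\frac{1}{1/\beta-1}u^{1/\beta-1}\mathrm{d}\bar v$; the form $\mathrm{d}(\mathrm{Re}\,h)$ is exact, anti-self-dual, and decays only like $r^{1/\beta-2}$ --- this is where $1/2<\beta<1$, i.e. the types II$^*$, III$^*$, IV$^*$, enters. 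The hyperk\"ahler equations are then solved for small $t$ by an iteration in weighted spaces, so all three K\"ahler forms change by exact terms and $[\omega^i]$ is unchanged, while the new metric has curvature comparable to $r^{1/\beta-4}$ and hence is not even isometric to the original. This slowly decaying exact anti-self-dual deformation is the rigorous counterpart of the extra freedom ``invisible in cohomology'' you are gesturing at; to salvage your approach you would need to supply an invariant of comparable strength, and without one the proposal does not yield the theorem.
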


  In Section 3, we will study the topology of ALG and ALH gravitational instantons. In Section 4, we will prove Theorem \ref{Main-Theorem-2}. In Section 5, we will prove Theorem \ref{Torelli-ALG} and the theorem that the gluing of any ALH gravitational instanton with itself is a K3 surface. In Section 6, we will use the gluing construction in Section 5 and the Torelli theorem for K3 surfaces to prove the uniqueness part of Theorem \ref{Main-Theorem-3}. In Section 7, we will prove the existence part of Theorem \ref{Main-Theorem-3}.\\

  \noindent{\bf Acknowledgement:} We learned the idea that the gluing of any ALH gravitational instanton with itself is a K3 surface as well as some initial set-ups of the gluing construction from the lecture of Sir Simon Donaldson in the spring of 2015 at Stony Brook University. We also thank Philip Boalch, 	Lorenzo Foscolo, Hans-Joachim Hein, Robert Lazarsfeld and Dennis Sullivan for some suggestions.

\section{Definitions}

  \begin{definition}
  (Hyperk\"ahler manifold)
  A manifold $(M,g)$ is called hyperk\"ahler if there are three compatible parallel complex structures $I, J, K$ on $M$ satisfying the quaternion relationships. Any map between two hyperk\"ahler manifolds is called tri-holomorphic if it preserves $I$,$J$ and $K$.

 It's well known that $I,J,K$ are all K\"ahler structures. They determine three K\"ahler forms $\omega^1, \omega^2, \omega^3$. The $I$-holomorphic 2-form $\omega^+=\omega^2+i\omega^3$ is called the holomorphic symplectic form. Conversely, it's well known that three closed forms $\omega^i$ satisfying $\omega^i\wedge\omega^j=2\delta_{ij}V$ for some nowhere vanishing 4-form $V$ determine a hyperk\"ahler structure on $M$. In fact, given three such 2-forms, we can call the linear span of them the ``self-dual" space. The orthogonal complement of the ``self-dual" space under wedge product is called the ``anti-self-dual" space. These two spaces determine a star operator. It's well known that the star operator determines a conformal class of metrics. The conformal factor can be determined by requiring $V$ to be the volume form. Using this metric and the three forms $\omega^i$, we can determine three almost complex structures $I$, $J$ and $K$. It's easy to see that $IJ=K$ or $IJ=-K$. In the K3 case, the former happens on $\Omega^+$ (the latter happens on $\Omega^-$). In noncompact case, the former always happens if it happens on the end. By Lemma 6.8 of \cite{Hitchin}, $I,J,K$ are parallel.
  \end{definition}

  \begin{definition}
  (Hyperk\"ahler rotation)
  For any matrix $$\left(
  \begin{array}{ccc}
  a_1&a_2&a_3 \\
  b_1&b_2&b_3 \\
  c_1&c_2&c_3
  \end{array}
  \right)\in\mathrm{SO}(3),$$
  $(M,g,a_1I+a_2J+a_3K,b_1I+b_2J+b_3K,c_1I+c_2J+c_3K)$ defines another hyperk\"ahler structure on $M$. It's called the hyperk\"ahler rotation of $(M,g,I,J,K)$. After hyperk\"ahler rotation, we can with out loss of generality, assume that the complex structure $a_1I+a_2J+a_3K$ in Theorem \ref{Compactification} is actually $I$.
  \end{definition}

  \begin{definition}
  (ALG model)
  Suppose $\beta\in(0,1]$ and $\tau\in\mathbb{H}=\{\tau|\mathrm{Im}\tau>0\}$ are parameters in the following table:
  \begin{center}
  \begin{tabular}{|c|c|c|c|c|c|c|c|c|}
  \hline
  $D$ & Regular & I$_0^*$ & II & II$^*$ & III & III$^*$ & IV & IV$^*$\\

  $\beta$ & 1 & $\frac{1}{2}$ & $\frac{1}{6}$ & $\frac{5}{6}$ & $\frac{1}{4}$ & $\frac{3}{4}$ & $\frac{1}{3}$ & $\frac{2}{3}$\\

  $\tau$ & $\in\mathbb{H}$ & $\in\mathbb{H}$ & $e^{2\pi i/3}$ & $e^{2\pi i/3}$ & $i$ & $i$ & $e^{2\pi i/3}$ & $e^{2\pi i/3}$\\
  \hline
  \end{tabular}
  \end{center}
  Suppose $l>0$ is any scaling parameter. Let $E$ be the manifold obtained by identifying $(u,v)$ with $(e^{2\pi i \beta}u,e^{-2\pi i \beta}v)$ in the space $$\{(u,v)|\mathrm{arg} u\in[0,2\pi\beta],|u|\ge R\}\subset(\mathbb{C}-B_R)\times \mathbb{C}/(\mathbb{Z}l\oplus\mathbb{Z}\tau l).$$ Then there is a flat hyperk\"ahler metric $h$ on $E$ such that $\omega^1=\frac{i}{2}(du\wedge d\bar u+dv\wedge d\bar v)$ and $\omega^+=\omega^2+i\omega^3=du\wedge dv$. It's called the standard ALG model.
  \label{ALG-model-definition}
  \end{definition}

  \begin{definition}
  (ALG)
  $(M,g)$ is called ALG of order $\delta$ if there exist a bounded domain $K \subset M$, and a diffeomorphism $\Phi:E\rightarrow M \setminus K$ such that
  $$|\nabla^m(\Phi^*g-h)|=O(|u|^{-m-\delta})$$
  for some $\delta>0$ and any $m\ge 0$.
  \label{ALG-definition}
  \end{definition}

  \begin{definition}
  (ALH model)
  Let $(E,h)$ be flat product of $[R,\infty)\times\mathbb{T}^3$. Let $r$ be the coordinate of $[R,\infty)$. Let $(\theta^1,\theta^2,\theta^3)$ be the coordinates of $\mathbb{T}^3=\mathbb{R}^3/\Lambda$. Then there exists a hyperk\"haler structure on it defined by $$\mathrm{d}r=I^*\mathrm{d}\theta^1=J^*\mathrm{d}\theta^2=K^*\mathrm{d}\theta^3.$$
  It's called the standard ALH model.
  \end{definition}

  \begin{definition}
  (ALH)
  $(M,g)$ is called ALH of order $\tau$ if there exist a bounded domain $K \subset M$, and a diffeomorphism $\Phi:E\rightarrow M \setminus K$ such that
  $$|\nabla^m(\Phi^*g-h)|=O(e^{-\tau r}), |\nabla^m(\Phi^*I-I)|=O(e^{-\tau r}),$$
  $$|\nabla^m(\Phi^*J-J)|=O(e^{-\tau r}), |\nabla^m(\Phi^*K-K)|=O(e^{-\tau r})$$
  for some $\tau>0$ and any $m\ge 0$.
  \label{ALH-definition}
  \end{definition}

  \begin{remark}
  We \cite{FirstPaper} \cite{SecondPaper} proved that the order of any ALH gravitational instanton is at least $\lambda_1=2\pi\min_{\lambda\in \Lambda^*\setminus\{0\}}|\lambda|$, where $\Lambda^*=\{\lambda\in\mathbb{R}^3|< \lambda,\theta>\in\mathbb{Z}, \forall\theta\in\Lambda\}$.
  Later, in Section 5, 6, 7, we will choose a positive number $\delta<\lambda_1/100$ and use $e^{\delta r}$ as the weight function.
  \end{remark}

  \begin{definition}
  (Rational elliptic surface)
  Let $F$, $G$ be two linearly independent cubic homogenous polynomials on $\mathbb{CP}^2$. $\{F=0\}$ and $\{G=0\}$ intersect at 9 points with multiplicity. Let $\bar M$ be the blow up of $\mathbb{CP}^2$ on these 9 points, if needed repeatedly. Then $z=F/G$ is a well-defined meromorphic function on $\bar M$ whose generic fiber has genus 1. $(\bar M,z)$ is called the rational elliptic surface. It's well known that it has a global section $\sigma$ corresponding to any exceptional curve in the blowing up construction.
  \label{Definition-rational-elliptic-surface}
  \end{definition}

  \begin{definition}
  (Holomorphic radius)
  The largest $\gamma$ satisfying the following conditions is called the $C^{1,\alpha}$-holomorphic radius at $p$:

  There exist holomorphic coordinates $z^j$ on $B(p,\gamma)$ such that
  $$\frac{1}{2}\delta_{i\bar j}\le g_{i\bar j}\le 2\delta_{i\bar j}$$
  and $$\gamma^{1+\alpha}||g_{i\bar j}||_{C^{1,\alpha}}\le 1.$$

  \end{definition}

  \begin{definition}
  $\chi$ is a smooth cut-off function from $(-\infty,+\infty)$ to $[0,1]$ such that $\chi\equiv 1$ on $(-\infty,-\frac{1}{2}]$, $\chi\equiv 0$ on $[\frac{1}{2},\infty)$ and $-2<\chi'\le 0$.
  \end{definition}

  \begin{definition}
  In the ALG case, by Theorem \ref{Compactification}, (after hyperk\"ahler rotation) there exists an $I$-holomorphic function $z$ on $M$ asymptotic to $u^{1/\beta}$. The letter $z$ in this paper will always mean this function. $r$ will always mean the function $|z|^{\beta}(1-\chi(|z|^{\beta}-1))+\chi(|z|^{\beta}-1)$.
  In the ALH case, the function $r(1-\chi(r-R-1))$ can be well defined on $M$. It's still denoted by $r$.
  \end{definition}

  \section{The topology of ALG and ALH gravitational instantons}

  In this section, we will study the topology of ALG and ALH gravitational instantons.

  We start from the study of flat gravitational instantons:

  \begin{theorem}
  For gravitational instanton $M$, the following conditions are equivalent:

  (1) $M$ is flat;

  (2) $M$ has trivial holonomy;

  (3) $M$ splits as $\mathbb{R}^{4-k}\times \mathbb{T}^k$, $k=0,1,2,3$.
  \end{theorem}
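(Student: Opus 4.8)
The plan is to establish the cycle $(3)\Rightarrow(1)\Rightarrow(2)\Rightarrow(3)$. Two of the arrows are essentially free. For $(3)\Rightarrow(1)$, a Riemannian product of flat Euclidean factors with a flat torus is flat, and the bound $|\mathrm{Rm}|=O(r^{-2-\tau})$ then holds vacuously. For $(2)\Rightarrow(1)$, the Ambrose--Singer theorem says the curvature tensor is valued in the holonomy Lie algebra, which is zero when the holonomy group is trivial. All of the content is therefore a structure result: I will show that the deck group of a flat hyperk\"ahler $4$-manifold consists only of translations, and then both $(1)\Rightarrow(2)$ and the remaining half of $(2)\Rightarrow(3)$ will follow from it.

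First I would pass to the universal Riemannian cover. Since $M$ is complete and flat, $\widetilde M$ is isometric to $(\mathbb{R}^4,g_{\mathrm{eucl}})$ and $M=\mathbb{R}^4/\Gamma$ with $\Gamma=\pi_1(M)$ acting freely by Euclidean isometries. The three parallel complex structures on $M$ pull back to $\Gamma$-invariant parallel, hence constant, complex structures on $\mathbb{R}^4$; a constant compatible hyperk\"ahler triple is nothing but an identification $\mathbb{R}^4\cong\mathbb{H}$ under which $I,J,K$ become left multiplication by $i,j,k$ (replacing ``left'' by ``right'' if the orientation demands it), so I fix such an identification. Since $M$ is canonically oriented, $\Gamma$ consists of orientation-preserving isometries $x\mapsto pxq+v$ with $p,q\in\mathrm{Sp}(1)$, $v\in\mathbb{H}$; commuting with left multiplication by all of $\mathbb{H}$ forces $p$ to be central, so every $\gamma\in\Gamma$ has the form $x\mapsto xq+v$.

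The key elementary point is that such a $\gamma$ has a fixed point unless $q=1$: the equation $xq+v=x$ reads $x(q-1)=-v$, and $q-1$ is invertible in the division algebra $\mathbb{H}$ whenever $q\neq1$, so $x=-v(q-1)^{-1}$ is fixed. Because $\Gamma$ acts freely, this forces $q=1$ for every $\gamma\in\Gamma$; that is, $\Gamma$ is a group of translations. The holonomy group of the flat manifold $\mathbb{R}^4/\Gamma$ is exactly the image of $\Gamma$ under the projection to linear parts, which is now trivial, proving $(1)\Rightarrow(2)$. For the last arrow: under $(2)$ we already know $M$ is flat, so by the same analysis $\Gamma$ is a discrete subgroup of $(\mathbb{R}^4,+)$, hence a lattice isomorphic to $\mathbb{Z}^k$; the case $k=4$ is excluded since $M$ is noncompact, and therefore $M=\mathbb{R}^4/\mathbb{Z}^k\cong\mathbb{R}^{4-k}\times\mathbb{T}^k$ with $k\in\{0,1,2,3\}$, which is $(3)$.

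I do not anticipate a genuine obstacle. The only steps requiring care are the bookkeeping that an orientation-preserving Euclidean isometry of $\mathbb{H}$ commuting with all left quaternion multiplications is a right multiplication followed by a translation, and the elementary observation that such a map acts freely exactly when its rotational part is the identity. It is worth noting that the curvature-decay hypothesis defining a gravitational instanton is never really used here: flatness makes the decay automatic, so the statement is in effect the classification of complete flat hyperk\"ahler surfaces with the compact case $\mathbb{T}^4$ removed by noncompactness.
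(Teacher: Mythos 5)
Your proposal is correct, and its skeleton is the same as the paper's: complete and flat gives $M=\mathbb{R}^4/\Gamma$, the deck group $\Gamma$ consists of pure translations, hence $M$ splits as $\mathbb{R}^{4-k}\times\mathbb{T}^k$ (with $k=4$ excluded by noncompactness), while the converse implications are immediate. The one place you diverge is welcome: the paper obtains ``flat $\Rightarrow$ trivial holonomy'' by citing the arguments of Theorem 3.4 of its first paper and then uses trivial holonomy to force the deck transformations to be translations, whereas you prove this step directly from the hyperk\"ahler structure --- the linear part of a deck transformation commutes with the constant $I,J,K$, hence is right multiplication by a unit quaternion $q$, and if $q\neq 1$ then $q-1$ is invertible in $\mathbb{H}$, producing a fixed point and contradicting freeness. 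This makes the key step self-contained (and is in the same spirit as the freeness of the $\mathrm{Sp}(1)$-action on $\mathbb{R}^4\setminus\{0\}$ that the paper invokes elsewhere), at the cost of a little quaternionic bookkeeping; your closing observation that the curvature-decay hypothesis plays no role here is also accurate.
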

  \begin{proof}
  By the arguments in the proof of Theorem 3.4 in our first paper \cite{FirstPaper}, it's easy to see that any flat gravitational instanton $M$ must have trivial holonomy. It's well known that $M$ is isometric to the Euclidean space quotient by covering transforms. However, since the holonomy is trivial, any covering transform must be a pure translation. Therefore, $M$ is isometric to the product of the Euclidean space with a flat torus. Conversely, it's trivial that (2) or (3) implies (1).
  \end{proof}

  Therefore, as mentioned in the introduction, we will assume that any gravitational instanton is non-flat.

  \begin{theorem}
  The first betti number of any ALG or ALH gravitational instanton must be 0. Moreover, in the ALG case, $D$ can't be regular.
  \label{First-betti-number}
  \end{theorem}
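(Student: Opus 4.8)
The plan is to prove $b_1(M)=0$ by showing that $M$ carries no nonzero harmonic $1$-form, and then to exclude a regular $D$ in the ALG case by an algebro-geometric computation on the compactification $\bar M$ of Theorem \ref{Compactification}. First observe that a nonzero \emph{parallel} $1$-form $\alpha$ on $M$ is impossible: its metric dual $X$ is a parallel vector field of constant positive length, and since $I,J,K$ are parallel the four vector fields $X,IX,JX,KX$ are parallel and pointwise linearly independent, so they trivialize $TM$ by parallel sections and force $\mathrm{Rm}\equiv 0$, contradicting the standing assumption that $M$ is non-flat. Hence it suffices to show that every harmonic $1$-form on $M$ arising as a cohomology representative is parallel, i.e.\ vanishes.

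The next step is Hodge theory on the ends. By weighted elliptic theory on the ALG (resp.\ ALH) end, computing indicial roots by separation of variables on the flat model $(E,h)$, every class in $H^1(M,\mathbb{R})$ has a harmonic representative $\alpha$ with an asymptotic expansion $\Phi^*\alpha=\alpha_\infty+O(r^{-1-\epsilon})$ in the ALG case and $\Phi^*\alpha=\alpha_\infty+O(e^{-\epsilon r})$ in the ALH case (with the corresponding derivative bounds), where $\alpha_\infty$ is a bounded harmonic $1$-form on $(E,h)$: translation invariant, hence $h$-parallel, for ALH, and for ALG a combination of the decaying class of $d(\arg u)$ with (when $\beta=1$) the two parallel classes of the $\mathbb{T}^2$ factor. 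In all cases $|\alpha|=O(1)$ and $|\nabla^g\alpha|=O(r^{-1-\epsilon})$ on the end, using that the Levi-Civita connection of $g$ differs from that of $h$ by a tensor of size $O(r^{-1-\delta})$. Now apply Bochner: since $M$ is Ricci-flat, $\tfrac12\Delta|\alpha|^2=|\nabla\alpha|^2$ for harmonic $\alpha$, so by the divergence theorem $\int_{B_R}|\nabla\alpha|^2=\int_{\partial B_R}\langle\nabla_\nu\alpha,\alpha\rangle$. The integrand is $O(r^{-1-\epsilon})$, while $\mathrm{Vol}(\partial B_R)=O(1)$ in the ALH case and $O(R)$ in the ALG case (quadratic volume growth), so along a suitable sequence $R_i\to\infty$ the boundary term tends to $0$; hence $\nabla\alpha\equiv 0$ and therefore $\alpha\equiv 0$ by the first step. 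This gives $b_1(M)=0$ for every ALG and ALH gravitational instanton.

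For the second assertion, suppose $M$ is ALG with $D$ regular; we may take $\bar M$ relatively minimal (the fibre types in Theorem \ref{Compactification} are reduced and relatively minimal, so passing to the relatively minimal model does not touch $D$). Then $D$ is a smooth fibre, the elliptic fibration $z:\bar M\to\mathbb{CP}^1$ is locally trivial near $D$, and in coordinates $w=1/z$ with a fibre coordinate $v$ the holomorphic symplectic form is $\omega^+\sim du\wedge dv=-w^{-2}\,dw\wedge dv$ near $D$; since $\omega^+$ is meromorphic on $\bar M$ and nowhere zero on $M$ (it is nondegenerate there), its divisor is $-2D$, so $K_{\bar M}\sim -2D\sim -2F$ for a general fibre $F$. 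But the canonical bundle formula for a relatively minimal elliptic surface over $\mathbb{CP}^1$ without multiple fibres gives $K_{\bar M}\sim(\chi(\mathcal{O}_{\bar M})-2)F$, whence $\chi(\mathcal{O}_{\bar M})=0$; by Noether's formula (and $K_{\bar M}^2=0$) this forces $e(\bar M)=0$, i.e.\ no singular fibres, so $\bar M$ is isotrivial and $q(\bar M)=1+p_g(\bar M)\ge 1$, hence $b_1(\bar M)\ge 1$. On the other hand, from the Gysin sequence of $M=\bar M\setminus D$, using that $[D]=[F]\ne0$ in $H_2(\bar M)$ makes $H^2(\bar M)\to H^2(D)$ surjective, one gets $b_1(M)=b_1(\bar M)\ge 1$, contradicting the previous paragraph. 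Therefore $D$ cannot be regular.

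The main obstacle is the analytic heart, Steps contained in the second paragraph: for ALH the vanishing $b_1=0$ cannot be read off from the topology of $\bar M$ (an isotrivial compactification, allowed a priori, would give $b_1(\bar M)=2$), so one must genuinely control the asymptotics of harmonic $1$-forms on the cylindrical (resp.\ twisted-conical) end and verify that the Bochner boundary term dies against the growth of $\mathrm{Vol}(\partial B_R)$. The algebraic input in the last paragraph—especially pinning down that $\omega^+$ has a pole of order exactly $2$ along a smooth $D$—also requires care with the compactification, but is otherwise routine.
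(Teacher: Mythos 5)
Your ALH argument and your ``parallel $1$-form forces flatness'' step are essentially the paper's (Melrose's theory plus a Weitzenb\"ock/Bochner integration by parts), but the ALG half has a genuine gap. Everything there rests on the assertion that each class in $H^1(M,\mathbb{R})$ admits a harmonic representative $\alpha$ with $|\alpha|=O(1)$ and $|\nabla\alpha|=O(r^{-1-\epsilon})$, justified only by ``weighted elliptic theory, computing indicial roots on the flat model.'' That is precisely the step the paper has to work for, and the indicial bookkeeping does not deliver it for free: on the ALG model the admissible modes $u^{\delta}\mathrm{d}u,\ \bar u^{\delta}\mathrm{d}\bar u$ require $(\delta+1)\beta\in\mathbb{Z}$, so for $\beta\ge 1/2$ (types I$_0^*$, II$^*$, III$^*$, IV$^*$, and the putative regular case $\beta=1$) there are indicial roots $\delta=1/\beta-1\in[0,1)$, i.e.\ harmonic $1$-forms growing like $r^{1/\beta-1}$, which a harmonic representative produced by weighted theory may a priori contain; boundedness is not automatic. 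Moreover, even producing a representative to which weighted theory applies is nontrivial: a closed representative of a class is naturally only $O(r)$ on the end, and the paper needs the radial compression map $F$ homotopic to the identity plus the weighted solvability theorem (Theorem 4.12 of \cite{FirstPaper}) to get started. The paper then resolves the growing modes by recognizing them as the exact harmonic forms $a\beta\,\mathrm{d}z+b\beta\,\mathrm{d}\bar z$ (with $z$ the global holomorphic function asymptotic to $u^{1/\beta}$), subtracting them within the class, iterating, and finishing with the maximum principle. Your proposal asserts the conclusion of all this as if it were a routine expansion; as written this is the missing core of the ALG case.

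Granting that analytic step (including for $\beta=1$), your exclusion of a regular $D$ is a genuinely different route from the paper's: the paper argues directly that for $\beta=1$ the Hessian of $z$ decays, integrates away, and makes $\mathrm{d}z$ a nonzero parallel form, contradicting non-flatness --- a short argument independent of the $H^1$ computation --- whereas you derive $K_{\bar M}\sim-2F$ from the order-two pole of $\omega^+$ and contradict $b_1(M)=0$ via the canonical bundle formula and the Gysin sequence (close in spirit to the paper's later proof that $\bar M$ is rational). Two points of care if you pursue it: Theorem \ref{Compactification} controls only the fibre at infinity, so you cannot simply assume there are no multiple fibres; you must use $\chi(\mathcal{O}_{\bar M})\ge 0$ and the nonnegativity of the multiple-fibre corrections to still force $\chi(\mathcal{O}_{\bar M})=0$ and $q\ge 1$. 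Also $\bar M$ is not yet known to be K\"ahler, so $[F]$ could vanish in $H^2(\bar M,\mathbb{R})$ and your surjectivity claim can fail; fortunately only the injectivity $H^1(\bar M)\hookrightarrow H^1(M)$ from the Gysin sequence is needed, which holds regardless. Finally, note the logical dependence: your route needs $b_1(M)=0$ in the regular case $\beta=1$ as well, i.e.\ the unproved analytic step applied exactly where the growing indicial modes are most plentiful, while the paper's treatment of the regular case avoids this entirely.
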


  \begin{proof}
  In the ALH case, Melrose's theory \cite{Melrose} works. In particular, the first cohomology group $H^{1}(M,\mathbb{R})$ is a subspace of the space of bounded harmonic 1-forms \cite{Melrose}. By Weitzenb\"ock formula, any bounded $(\mathrm{d}^*\mathrm{d}+\mathrm{d}\mathrm{d}^*)$-harmonic 1-form $\phi$ is also $\nabla^*\nabla$-harmonic. By Melrose's theory, $\nabla\phi$ decays exponentially. After integration by parts,
  $$\int_M|\nabla\phi|^2\chi(r-R)\le\int_M|\nabla\phi||\nabla\chi(r-R)|\rightarrow0,$$
  as $R\rightarrow\infty$. Therefore, $\phi$ is a parallel 1-form. If it's nonzero, the holonomy group must be trivial since the action of $\mathrm{Sp}(1)$ is free on $\mathbb{R}^4\setminus\{0\}$. It contradicts the non-flat assumption.

  In the ALG case, (after hyperk\"ahler rotation) if $D$ is regular, i.e. if $\beta=1$, the $I$-holomorphic function $z$ on $M$ is asymptotic to the function $u$ on $E$. $\nabla\mathrm{d}u=0$ on $E$, so when we go through the construction of $z$ in our first paper \cite{FirstPaper}, it's easy to see that $|\nabla\mathrm{d}z|=O(r^{-1-\epsilon})$ for any small enough $\epsilon$.
  So $$\int_M|\nabla\mathrm{d}z|^2\chi(\frac{r}{2R}-1)\le\int_M|\nabla\mathrm{d}z||\nabla\chi(\frac{r}{2R}-1)|\rightarrow0,$$
  as $R\rightarrow\infty$. As before, it contradicts the non-flat assumption.

  Therefore $\beta<1$. Inspired by Lemma 6.11 of \cite{Melrose} ,we define $$f(r)=(\frac{1}{r}+\frac{1}{4R}\chi(\frac{2R}{r}-\frac{3}{2}))^{-1}.$$
  Then $f$ is increasing. $f(r)=r$ when $r\le R$ and $\lim_{r\rightarrow\infty}f(r)=4R$.
  Let $u=re^{i\theta}$. The map $F(re^{i\theta},v)=(f(r)e^{i\theta},v)$ on $M$ is homotopic to the identity. Therefore, any smooth closed 1-form $\phi$ is cohomologous to $F^{*}\phi$. It's easy to see that $F^{*}\phi=O(r)$.

  By Theorem 4.12 of \cite{FirstPaper}, for any small positive $\epsilon$, there exists a smooth 1-form $\psi$  such that
  $$\int_M|\psi|^2r^{-8-\epsilon}+|\nabla\psi|^2r^{-6-\epsilon}+|\nabla^2\psi|^2r^{-4-\epsilon}<\infty$$ and
  $$F^{*}\phi=\mathrm{d}\mathrm{d}^*\psi+\mathrm{d}^*\mathrm{d}\psi.$$
  Since $F^{*}\phi$ is closed, it's easy to see that $\mathrm{d}^*\mathrm{d}\psi$ is a closed harmonic 1-form.

  Similar to Theorem 4.6 of \cite{SecondPaper}, the leading term of $\mathrm{d}^*\mathrm{d}\psi$ can be written as $au^{\delta}\mathrm{d}u+b\bar u^{\delta}\mathrm{d}\bar u$ for some $\delta\le 1$. To make it well-defined, $(\delta+1)\beta$ must be an integer. The first available choice is $\delta=1/\beta-1$ if $\beta\ge1/2$ or $\delta=-1$ if $\beta<1/2$. In the first case, $\mathrm{d}^*\mathrm{d}\psi-a\beta\mathrm{d}z-b\beta\mathrm{d}\bar z$ is a much smaller closed harmonic 1-form on $M$. Its order is also at most $r^{-1}$. However, by maximal principle and the Ricci flatness, any decaying harmonic 1-form on $M$ must be 0. In conclusion, $\phi$ must be exact. In other words, $H^1(M)=0$ for any ALG gravitational instanton.
  \end{proof}

  \begin{theorem}
  $\bar M$ is a rational elliptic surface.
  \end{theorem}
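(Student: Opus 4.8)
The plan is to show that the compact elliptic surface $z\colon\bar M\to\mathbb{CP}^1$ of Theorem~\ref{Compactification}, with $D=\{z=\infty\}$ and $M=\bar M\setminus D$, is relatively minimal, satisfies $K_{\bar M}\sim-[D]\sim-F$ (where $F$ denotes the fiber class), has $b_1(\bar M)=0$, and has no multiple fibers; the Enriques--Kodaira classification of elliptic surfaces then forces $\bar M$ to be a rational elliptic surface in the sense of Definition~\ref{Definition-rational-elliptic-surface}.

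\emph{Relative minimality.} Since $M$ is hyperk\"ahler, $K_M$ is holomorphically trivial, trivialized by $\omega^+=\omega^2+i\omega^3$; hence for any compact smooth rational curve $C\subset M$ adjunction gives $C^2=2g(C)-2-K_M\cdot C=-2$, so $M$ contains no $(-1)$-curve. Every fiber of $z$ over a point of $\mathbb{C}$ is disjoint from $D$ and therefore lies inside $M$, so none of its components is a $(-1)$-curve; and $D$ itself is, by Theorem~\ref{Compactification} together with Theorem~\ref{First-betti-number} (which excludes the regular case in the ALG setting), either a smooth fiber of self-intersection $0$ or a Kodaira fiber of type I$_0^*$, II, II$^*$, III, III$^*$, IV or IV$^*$, all of whose components are $(-2)$-curves (or which is irreducible of self-intersection $0$). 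Thus no fiber of $z$ contains a $(-1)$-curve, and $z$ is relatively minimal.

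\emph{The canonical class and $b_1$.} By the compactification result (\cite{FirstPaper}, Theorem~\ref{Compactification}), $\omega^+$ extends to a meromorphic $2$-form on $\bar M$ whose polar divisor is $D$; since $\omega^+$ is nowhere zero on $M$ it has no zeros on $\bar M$ at all, so $K_{\bar M}=\operatorname{div}(\omega^+)=-[D]\sim-F$. (If one knows only that the polar set of $\omega^+$ is contained in $D$, one argues instead that $K_{\bar M}$ is supported on $D=\sum_i m_i\Theta_i$; adjunction gives $K_{\bar M}\cdot\Theta_j=0$ for all $j$, so $K_{\bar M}$ lies in the one-dimensional kernel of the fiber intersection matrix and hence equals $c\,[D]$ with $c\in\mathbb{Z}$, and $c\le-1$ because $\operatorname{Vol}(M)=\infty$ forces $\int_M\omega^+\wedge\overline{\omega^+}=4\operatorname{Vol}(M)=\infty$, so $\omega^+$ cannot extend holomorphically.) In particular $h^0(\bar M,mK_{\bar M})=h^0(\bar M,\mathcal{O}(-mF))=0$ for every $m\ge1$, so $\kappa(\bar M)=-\infty$ and $p_g(\bar M)=0$. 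To see $b_1(\bar M)=0$, apply Mayer--Vietoris with $V=M$ and $U$ a regular neighborhood of $D$: since $U\simeq D$ and $U\cap V$ is connected, the connecting map $H^0(U\cap V)\to H^1(\bar M)$ vanishes, so $H^1(\bar M)\hookrightarrow H^1(D)\oplus H^1(M)$, and $H^1(M)=0$ by Theorem~\ref{First-betti-number}. In the ALG case $H^1(D)=0$ for each of the listed fiber types, so $b_1(\bar M)=0$. In the ALH case $D$ is a smooth elliptic curve with $D^2=0$, so $U\cap V$ is the circle bundle of a topologically trivial line bundle over $\mathbb{T}^2$, i.e.\ $\mathbb{T}^3$, and the restriction $H^1(D)\to H^1(U\cap V)$ is the injective pullback along the projection $\mathbb{T}^3\to\mathbb{T}^2$; the next term of the exact sequence then forces $H^1(\bar M)=0$ once more. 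Hence $q(\bar M)=0$ and $\chi(\mathcal{O}_{\bar M})=1-q+p_g=1$.

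\emph{No multiple fibers, and conclusion.} Kodaira's canonical bundle formula for the relatively minimal elliptic fibration $z$ reads $K_{\bar M}\sim(\chi(\mathcal{O}_{\bar M})-2)F+\sum_k(m_k-1)F_k=-F+\sum_k(m_k-1)F_k$, where $m_kF_k$ are the multiple fibers ($m_k\ge2$, $F_k$ reduced). Comparing with $K_{\bar M}\sim-F$ gives $\sum_k(m_k-1)F_k\sim0$; as the left-hand side is an effective divisor supported on fibers over finite points, it must vanish, so $z$ has no multiple fibers. Therefore $\bar M$ is a relatively minimal elliptic surface over $\mathbb{CP}^1$ with $b_1=0$, $\chi(\mathcal{O})=1$ (so $b_2=10$), $K_{\bar M}\sim-F$ and no multiple fibers; by the Enriques--Kodaira classification $\bar M$ is a rational surface (the only $\kappa=-\infty$ surfaces with $b_1=0$), and a rational surface carrying a relatively minimal genus-$1$ fibration over $\mathbb{CP}^1$ is precisely the blow-up of $\mathbb{CP}^2$ at the nine base points (with multiplicity) of a pencil of cubics, i.e.\ a rational elliptic surface in the sense of Definition~\ref{Definition-rational-elliptic-surface}. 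The main obstacle is the identification $K_{\bar M}=-[D]$: it depends on knowing that $\omega^+$ extends \emph{meromorphically} to $\bar M$ with polar divisor exactly $D$, which is part of the content of \cite{FirstPaper}; granting that, the remainder is intersection-theoretic bookkeeping, the only other delicate step being the vanishing $b_1(\bar M)=0$ in the ALH case, where one must use the topology of the tubular neighborhood of the fiber at infinity.
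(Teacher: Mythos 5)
Your proposal is correct, and it shares the paper's two key inputs: the vanishing $H^1(M)=0$ from Theorem \ref{First-betti-number} fed into a Mayer--Vietoris argument for $b_1(\bar M)=0$, and the meromorphic extension of $\omega^+$ with polar divisor $D$, giving $K_{\bar M}=-[D]$, hence $p_g=0$ and the vanishing of all plurigenera. Where you diverge is in how rationality and the final identification are reached. The paper first invokes Kodaira's classification (even $b_1$ and $p_g=0$ force algebraicity), then Castelnuovo's criterion ($q=P_2=0$), then computes $b^-=9$, $b^+=1$, $b_2=10$ from Kodaira's formulas, and finally cites Naruki for the statement that such an elliptic surface is a cubic-pencil blow-up of $\mathbb{CP}^2$. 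You instead establish relative minimality directly (no $(-1)$-curves in fibers, using triviality of $K_M$ and adjunction, plus the list of possible types of $D$), use Kodaira's canonical bundle formula together with $K_{\bar M}\sim -F$ to exclude multiple fibers, and then appeal to the Enriques--Kodaira classification ($\kappa=-\infty$ and $b_1=0$ force rationality) and the standard fact that a relatively minimal rational elliptic fibration without multiple fibers arises from a pencil of cubics. Your route buys two things: the exclusion of multiple (Halphen-type) fibers is made explicit rather than absorbed into the citation of standard results, and, notably, you treat the ALH case of $b_1(\bar M)=0$ correctly --- the paper's Mayer--Vietoris step as written only uses $H^1(D)=0$ for the ALG fiber types, whereas for a regular fiber $H^1(D)\neq 0$ and one needs, as you argue, the injectivity of $H^1(D)\rightarrow H^1(\mathbb{T}^3)$ for the boundary of the tubular neighborhood. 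The paper's route, in exchange, makes the algebraicity and the Betti-number bookkeeping ($b_2=10$) explicit before handing off to Naruki. Your final sentence ("a rational surface carrying a relatively minimal genus-$1$ fibration over $\mathbb{CP}^1$ is precisely the blow-up \dots") should be read as using the absence of multiple fibers and $K_{\bar M}\sim -F$ established just before, since Halphen surfaces are rational elliptic but are not cubic-pencil blow-ups; with that understanding the step is at the same citation level as the paper's appeal to Naruki.
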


  \begin{proof}
  Choose a tubular neighborhood $T$ of $D$. Then $\bar M=M\cup T$. The Mayer-Vietoris sequence is
  $$H^0(M)\oplus H^0(T)\rightarrow H^0(M\cap T)\rightarrow H^1(\bar M)\rightarrow H^1(M)\oplus H^1(T).$$

  The first map is surjective, so the second map is 0. So the third map is injective. Notice that $H^1(T)=H^1(D)=0$ because $D$ is of type I$_0^*$, II, II$^*$, III, III$^*$, IV, or IV$^*$. $H^1(M)$ also vanishes by Theorem \ref{First-betti-number}. So $H^1(\bar M)=0$.

  A careful examination of our construction of $\bar M$ in \cite{FirstPaper} and Kodaira's paper \cite{KodairaEllipticSurface} yields that $\omega^+$ can be extended to a meromorphic 2-form on $\bar M$ with a pole $D$. In other words, $D$ is the anti-canonical divisor of $\bar M$. Since $D$ is homologous to another fiber of $z$, the self intersection number of $D$ is 0. In other words $c_1^2(\bar M)=c_1^2(-K)=[D]^2=0$. It's also very easy to see that $H^0(\bar M,mK)=0$ for any $m>0$. In particular the geometric genus $p_g=\mathrm{dim}H^0(\bar M,K)=0$.

  By Kodaira's classification of complex surfaces \cite{KodairaComplexSurface}, since the first betti number of $\bar M$ is even and $p_g=0$, $\bar M$ must be algebraic. By Castelnuovo theorem, $\bar M$ must be rational because $H^{0,1}(\bar M)=H^0(\bar M,2K)=0$. By Kodaira's Equation 13 in \cite{KodairaComplexSurface}, $c_1^2+\mathrm{dim}H^{0,1}+b^{-}=10p_g+9$. So $b^{-}=9$.
  By Theorem 3 of \cite{KodairaComplexSurface}, $b^{+}=1+2p_g=1$. Therefore the second betti number $b_2$ of $\bar M$ equals to 10. It's standard \cite{Naruki} to prove that $(\bar M,z)$ is a rational elliptic surface defined in Definition \ref{Definition-rational-elliptic-surface}.
  \end{proof}

  \begin{theorem}
  Given any ALH gravitational instantons $M$, there exists a diffeomorphism from the minimal resolution of $(\mathbb{R}\times\mathbb{T}^3)/\mathbb{Z}_2$ to $M$ whose restriction on $[R,\infty)\times\mathbb{T}^3$ is $\Phi$ in Definition \ref{ALH-definition}.
  \end{theorem}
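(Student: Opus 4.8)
The plan is to realise $M$ as a rational elliptic surface with a smooth fibre deleted, to pin down the diffeomorphism type by a deformation argument together with the Biquard--Minerbe example, and then to upgrade the resulting diffeomorphism so that it coincides with $\Phi$ on the end. By Theorem \ref{Compactification} together with the two preceding theorems, after a hyperk\"ahler rotation $M$ is biholomorphic to $\bar M\setminus D$, where $\bar M$ is a rational elliptic surface, $D=\{z=\infty\}$ is a \emph{smooth} (type $\mathrm{I}_0$) fibre of $z$, and $[D]^2=0$; moreover, from the way $\bar M$ is built out of the ALH coordinates in \cite{FirstPaper}, the map $\Phi$, the complex structure $I$, the fibration $z$ and a section $\sigma$ of $\bar M$ are mutually asymptotically compatible. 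Since $D$ is a smooth fibre, its normal bundle is trivial and the monodromy of $z$ around $D$ is trivial, so a deleted tubular neighbourhood of $D$ is holomorphically $D\times(\mathbb{D}\setminus\{0\})$; under $\Phi$ this is the end $[R,\infty)\times\mathbb{T}^3$, with $D\cong\mathbb{T}^2$ the fibre direction and the remaining circle of $\mathbb{T}^3$ the meridian of $D$, the section $\sigma$ cutting out a point of that $\mathbb{T}^2$.

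For the diffeomorphism type I would use that rational elliptic surfaces equipped with a distinguished smooth fibre form a connected family: each is the blow-up of $\mathbb{CP}^2$ at the base locus of a pencil of cubics, a generic member of a pencil is smooth, and the parameter space of such data is connected. Ehresmann's theorem over a path in this space yields a diffeomorphism $G\colon\bar M\to\bar M_0$ with $G(D)=D_0$, and, using a relative version of the trivialisation, one can take $G$ to carry a tubular neighbourhood of $D$ together with the fibration $z$ and the section $\sigma$ there to the corresponding data near $D_0$. Here $(\bar M_0,D_0)$ is the compactification provided by Theorem \ref{Compactification} of the Biquard--Minerbe ALH gravitational instanton, which by construction lives on the smooth manifold $\widetilde{(\mathbb{R}\times\mathbb{T}^3)/\mathbb{Z}_2}$ with $\Phi_0$ the standard inclusion of the end. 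Note that $D$ and $D_0$ are isomorphic as elliptic curves, since $M$ and $M_0$ are asymptotic to the same flat model, with lattice $\Lambda$ and the same hyperk\"ahler structure on it. Hence $M$ is diffeomorphic to $\widetilde{(\mathbb{R}\times\mathbb{T}^3)/\mathbb{Z}_2}$.

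It then remains to arrange $G\circ\Phi=\Phi_0$. After one further modification of $G$ near $D_0$ by a fibre-preserving self-diffeomorphism of $\mathbb{T}^2\times\mathbb{D}$ — any element of $\mathrm{GL}(2,\mathbb{Z})$ acting on the $\mathbb{T}^2$-factor extends over $\mathbb{D}$ — we may assume that $h:=\Phi_0^{-1}\circ G|_M\circ\Phi$, a self-diffeomorphism of the model end $[R,\infty)\times\mathbb{T}^3$, acts trivially on $H_1(\mathbb{T}^3)$: it preserves the fibre subgroup $F\cong H_1(D)$ (because $G$ respects the fibrations and $\Phi,\Phi_0$ are asymptotically compatible with $z,z_0$), and after the last adjustment acts as the identity there; it fixes the meridian subgroup $M$ exactly (because $G$ carries $\sigma$ to $\sigma_0$ and $\Phi,\Phi_0$ are asymptotically compatible with them), and orientation forces the meridian sign to be $+1$; since $F\oplus M=H_1(\mathbb{T}^3)$, the map $h_*$ is the identity. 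As $\pi_0\mathrm{Diff}(\mathbb{T}^3)=\mathrm{GL}(3,\mathbb{Z})$, the diffeomorphism $h$ is isotopic to the identity; tapering the isotopy over the radial variable and extending by the identity over the compact core produces a self-diffeomorphism $H$ of $\widetilde{(\mathbb{R}\times\mathbb{T}^3)/\mathbb{Z}_2}$ whose restriction to the end equals $h$ outside a compact set. Then $\Psi:=(G|_M)^{-1}\circ H$ is the desired diffeomorphism.

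The deformation-equivalence of rational elliptic surfaces and the isotopy-extension step are routine. The point that needs care is controlling the residual self-diffeomorphism $h$ of the end. A priori $h$ could realise an arbitrary element of $\mathrm{GL}(3,\mathbb{Z})$, and in particular a \emph{shear} of $\mathbb{T}^3$ along the fibre tori; such a shear does not extend over the minimal resolution — indeed it fails to extend even over $D$ itself — so the argument genuinely depends on using the ALH data rather than an arbitrary trivialisation of the end. It is the asymptotic compatibility of $\Phi$ with the section $\sigma$ — equivalently, with the holomorphic trivialisation of the elliptic fibration near the type-$\mathrm{I}_0$ fibre $D$ determined by $\sigma$ — that pins down the meridian exactly and thereby kills the shear, reducing $h$ to a map isotopic to the identity; this is the step I expect to be the main obstacle.
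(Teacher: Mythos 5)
Your main line — compactify via Theorem \ref{Compactification} and the theorem that $\bar M$ is a rational elliptic surface, then connect $(\bar M,D)$ to the Biquard--Minerbe example through the connected family of cubic pencils with a smooth marked fibre and invoke Ehresmann — is exactly the paper's argument (the paper phrases the connectedness as: non-generic coefficients of $F,G$ have real codimension two, so generic pencils are path-connected, and small deformations do not change the diffeomorphism type of $\bar M\setminus D$). The paper treats the matching with $\Phi$ on the end as immediate; it is precisely in your added treatment of that step that there are problems.

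First, the step ``modify $G$ near $D_0$ by a fibre-preserving self-diffeomorphism of $\mathbb{T}^2\times\mathbb{D}$ realizing a given element of $\mathrm{GL}(2,\mathbb{Z})$'' is not legitimate as stated: such a twist is a constant nontrivial automorphism of the $\mathbb{T}^2$-factor, hence is not isotopic to the identity near the boundary of the tubular neighbourhood, so it neither extends over $\bar M_0\setminus(\text{nbhd of }D_0)$ nor can be interpolated with the identity; to realize it by a global change of $G$ you would need, e.g., to realize the monodromy of the elliptic fibration by a fibred diffeomorphism fixing $D_0$, which you do not address. Second, the claim on which your whole strategy rests — that a shear (meridian $\mapsto$ meridian $+$ fibre class) ``does not extend over the minimal resolution'' — is false. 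It fails to extend over the compactification $\bar M_0$ (the meridian becomes nullhomotopic in $\mathbb{T}^2\times\mathbb{D}$), but on the open manifold $\widetilde{(\mathbb{R}\times\mathbb{T}^3)/\mathbb{Z}_2}$ any $A\in\mathrm{SL}(3,\mathbb{Z})$, acting as $(t,\theta)\mapsto(t,A\theta)$ on $\mathbb{R}\times\mathbb{T}^3$, commutes with the involution $(t,\theta)\mapsto(-t,-\theta)$, descends to the orbifold permuting the eight singular points, and lifts to the minimal resolution after an isotopy correction near the exceptional curves (every orientation-preserving diffeomorphism of the boundary $\mathbb{RP}^3$ of an exceptional neighbourhood is isotopic to the identity). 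This false claim is what led you to insist that $h$ be pinned down exactly via the section $\sigma$; in fact the correct statement makes both problematic steps, and the unverified assertion that $\Phi$ is asymptotically compatible with $\sigma$, unnecessary: $h$ preserves orientation and the direction of the end, so it induces some $A\in\mathrm{SL}(3,\mathbb{Z})$ on $H_1(\mathbb{T}^3)$; compose with the equivariant-linear extension of $A$ over the resolution and then taper the remaining isotopy exactly as in your last step, using $\pi_0\mathrm{Diff}(\mathbb{T}^3)=\mathrm{GL}(3,\mathbb{Z})$. With that replacement your end-matching closes and the rest of your argument coincides with the paper's.
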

  \begin{proof}
  The divisor $D$ is smooth by our construction of the compactification. So for any small enough deformation in the coefficients of $F$ and $G$, the diffeomorphism type of $M=\bar M\setminus D$ is invariant. For generic choice of coefficients of $F$ and $G$, $\{G=0\}$ is smooth and $\{F=0\}$ intersects $\{G=0\}$ in distinct points. Since the non-generic parameters have real codimension 2, generic points can be connected by paths inside the set of generic points. Therefore, it's easy to see that any ALH gravitational instantons are diffeomorphic to each other. In particular, they are diffeomorphic to the specific example of Biquard and Minerbe \cite{BiquardMinerbe} on the minimal resolution of $(\mathbb{R}\times\mathbb{T}^3)/\mathbb{Z}_2$.
  \end{proof}

  \section{Classification of ALG gravitational instantons}

  In this section, we will slightly modify Hein's result in \cite{Hein} to get Theorem \ref{Main-Theorem-2}.

  Let $\omega$ be any K\"ahler form on $\bar M$. Let $a$ be the area of each regular fiber with respect to $\omega$. Recall that for any section $\sigma'$ of $z$ on $\Delta^*=\{|z|^\beta\ge R\}$, Hein \cite{Hein} wrote down some explicit formula of the semi-flat Calabi-Yau metric $\omega_{\mathrm{sf},a}[\sigma']$ on $M|_{\Delta^*}$ whose area of each regular fiber is also $a$:

  \begin{definition} (\cite{Hein})
  Using $\sigma'$ as the zero section, $M|_{\Delta^*}$ is locally biholomorphic to
  $$M|_{U}=(U\times\mathbb{C})/(z,v)\sim(z,v+m\tau_1(z)+n\tau_2(z))$$
  for some holomorphic functions $\tau_1$ and $\tau_2$. So locally, $\omega^+=g(z)\mathrm{d}z\wedge\mathrm{d}v$ for some holomorphic function $g:U\rightarrow\mathbb{C}$. Then locally
  $$\omega_{\mathrm{sf},a}[\sigma']=i|g|^2\frac{\mathrm{Im}(\bar\tau_1\tau_2)}{a}\mathrm{d}z\wedge\mathrm{d}\bar z+
  \frac{i}{2}\frac{a}{\mathrm{Im}(\bar\tau_1\tau_2)}(\mathrm{d}v-\Gamma\mathrm{d}z)(\mathrm{d}\bar v-\bar\Gamma\mathrm{d}\bar z),$$
  where $$\Gamma(z,v)=\frac{1}{\mathrm{Im}(\bar\tau_1\tau_2)}(\mathrm{Im}(\bar\tau_1v)\frac{\mathrm{d}\tau_2}{\mathrm{d}z}
  -\mathrm{Im}(\bar\tau_2v)\frac{\mathrm{d}\tau_1}{\mathrm{d}z}).$$
  It's easy to check that $\omega_{\mathrm{sf},a}[\sigma']$ is actually a globally well-defined form.
  \end{definition}

  After that, the following theorem is essential:

  \begin{theorem}
  There exist a real smooth polynomial growth function $\phi_1$ on $M|_{\Delta^*}$ and a polynomial growth holomorphic section $\sigma'$ of $z$ over $\Delta^*$ such that $\omega_{\mathrm{sf},a}[\sigma']=\omega+i\partial\bar\partial\phi_1$.
  \label{PartialBarPartialLemma}
  \end{theorem}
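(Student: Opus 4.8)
The plan is to follow Hein \cite{Hein}, reducing the statement to a weighted $\partial\bar\partial$-lemma on the ALG end and solving it with the harmonic analysis developed in \cite{FirstPaper}. First I would fix $\sigma'$ once and for all: take it to be the restriction to $\Delta^{*}$ of a global holomorphic section of $(\bar M,z)$, which exists by Definition \ref{Definition-rational-elliptic-surface}. Since this $\sigma'$ is algebraic it automatically has polynomial growth, so the section part of the statement is free and only $\phi_{1}$ remains to be produced (a different admissible choice of $\sigma'$ would only change $\phi_{1}$). Put $\alpha=\omega_{\mathrm{sf},a}[\sigma']-\omega$ on $M|_{\Delta^{*}}$. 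It is a closed real $(1,1)$-form, and since $0=d\alpha=\partial\alpha+\bar\partial\alpha$ splits by bidegree, $\alpha$ is also $\bar\partial$-closed. One checks that $\alpha$ and all its covariant derivatives are of polynomial growth (in fact bounded): the restrictions of $\omega$ and of $\omega_{\mathrm{sf},a}[\sigma']$ to each regular fiber are Kähler forms of the same area $a$ on that fiber, while in the base and mixed directions smoothness of $\omega$ across $D$ forces the relevant components of $\omega$ to decay and Hein's explicit formula keeps those of $\omega_{\mathrm{sf},a}[\sigma']$ bounded.

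Next I would note that $\alpha$ is cohomologically trivial on $M|_{\Delta^{*}}$. Enlarging $R$, there are no singular fibers over $\{|z|^{\beta}\ge R\}$, so $M|_{\Delta^{*}}$ is a smooth $\mathbb{T}^{2}$-bundle over a base that retracts to a circle, hence is homotopy equivalent to the mapping torus of the monodromy $v\mapsto e^{-2\pi i\beta}v$ on $\mathbb{T}^{2}=\mathbb{C}/\Lambda$. A Wang-sequence computation — using, for $\beta<1$ (true by Theorem \ref{First-betti-number}), that this monodromy has no nonzero fixed vector on $H^{1}(\mathbb{T}^{2};\mathbb{R})$ and acts trivially on $H^{2}(\mathbb{T}^{2};\mathbb{R})$ — gives $H^{2}(M|_{\Delta^{*}};\mathbb{R})\cong\mathbb{R}$, detected by integration over a fiber. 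As $\int_{M_{z}}\alpha=a-a=0$ for every fiber $M_{z}$, the class $[\alpha]$ vanishes, so $\alpha$ is exact.

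The core is to upgrade ``$\alpha$ is an exact, $\bar\partial$-closed, polynomially bounded real $(1,1)$-form'' to ``$\alpha=i\partial\bar\partial\phi_{1}$ with $\phi_{1}$ real of polynomial growth'', a weighted $\partial\bar\partial$-lemma on the ALG end, which I would carry out in three steps paralleling the treatment of $1$-forms in \cite{FirstPaper}. (i) By the weighted harmonic analysis on ALG ends of \cite{FirstPaper} — the analogue for $2$-forms of Theorem 4.12 there, with the weight placed in a gap of the indicial roots, and with the leading-term bookkeeping done exactly as in the proof of Theorem \ref{First-betti-number} above (subtract explicit closed harmonic model forms to absorb the leading term, then kill the decaying harmonic remainder) — the exact, polynomially bounded closed form $\alpha$ admits a polynomially bounded real primitive $\eta$, so $\alpha=d\eta$. (ii) Write $\eta=\eta^{1,0}+\eta^{0,1}$ with $\eta^{1,0}=\overline{\eta^{0,1}}$; since $d\eta=\alpha$ has bidegree $(1,1)$, its $(0,2)$-part $\bar\partial\eta^{0,1}$ vanishes, and $\alpha=\partial\eta^{0,1}+\bar\partial\eta^{1,0}=\partial\eta^{0,1}+\overline{\partial\eta^{0,1}}$. (iii) By the weighted Hodge decomposition for $\bar\partial$ on the ALG end (again from \cite{FirstPaper}), $\eta^{0,1}=\bar\partial g+\mu$ with $g$ a polynomially bounded function and $\mu$ a polynomially bounded $\bar\partial$-harmonic $(0,1)$-form; on the Kähler manifold $M|_{\Delta^{*}}$ one has $\Delta_{\partial}=\Delta_{\bar\partial}$, so $\mu$ is also $\partial$-harmonic and hence (integrating by parts, legitimate for the weights in play) $\partial$-closed. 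Therefore $\partial\eta^{0,1}=\partial\bar\partial g$ and $\alpha=\partial\bar\partial g+\overline{\partial\bar\partial g}=\partial\bar\partial g-\partial\bar\partial\bar g=i\partial\bar\partial(2\,\mathrm{Im}\,g)$, so $\phi_{1}:=2\,\mathrm{Im}\,g$ is real, of polynomial growth, and smooth by elliptic regularity, with $\omega_{\mathrm{sf},a}[\sigma']=\omega+i\partial\bar\partial\phi_{1}$.

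The main obstacle is the weighted elliptic package underlying steps (i) and (iii): one must identify, in terms of $\beta$ and the lattice $\Lambda$, the indicial roots of the relevant Laplace-type operators on the ALG model of Definition \ref{ALG-model-definition} — via Fourier expansion along the $\mathbb{T}^{2}$ fibers and ODE analysis in the cone variable of cone angle $2\pi\beta$ — and then check that, for a suitable polynomial weight, these operators are Fredholm with cokernel matched to the topology computed above, so that the cohomologically trivial $\alpha$ lands in the required images and the solution gains only polynomial growth. Once this is in place, the type decomposition, the Kähler-identity step, and the growth bookkeeping are routine, and much of the needed analysis is already available from \cite{FirstPaper}.
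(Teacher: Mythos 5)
There is a genuine gap, and it sits in your very first step: you fix $\sigma'$ in advance (the restriction of a global section of the rational elliptic surface) and assert that ``a different admissible choice of $\sigma'$ would only change $\phi_1$.'' That is false, and the correct choice of $\sigma'$ is precisely the content of the theorem. Here is a concrete obstruction. If $\alpha=i\partial\bar\partial\phi$ for a smooth function $\phi$ on $M|_{\Delta^*}$, then $\tfrac{i}{2}(\bar\partial\phi-\partial\phi)$ is a primitive of $\alpha$ whose $(0,1)$-part has $\mathrm{d}\bar v$-coefficient $\tfrac{i}{2}\partial_{\bar v}\phi$, whose average over every fiber vanishes. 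Moreover, this fiberwise average is independent of the choice of primitive: since $\beta<1$, the monodromy acts on $H^1(\mathbb{T}^2,\mathbb{R})$ with no nonzero invariant vector, so any closed $1$-form on $M|_{\Delta^*}$ restricts to an exact form on each fiber, and the fiber average of its $\mathrm{d}\bar v$-coefficient is zero. Hence ``fiberwise average of the $\mathrm{d}\bar v$-coefficient of the $(0,1)$-part of a primitive'' is an invariant of the closed $(1,1)$-form which must vanish whenever the form is $i\partial\bar\partial$-exact (with any growth). But by Hein's explicit formula quoted in the paper, for $\omega_{\mathrm{sf},a}[\sigma']-\omega_{\mathrm{sf},a}[\sigma]=\mathrm{d}\tilde\zeta$ this invariant equals $-\tfrac{i}{2}\tfrac{a}{\mathrm{Im}(\bar\tau_1\tau_2)}\sigma'(z)$, which is not identically zero when $\sigma'\neq\sigma$. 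Consequently two distinct sections cannot both make the difference with $\omega$ equal to $i\partial\bar\partial$ of a function: at most one translate works, and it is determined by $\omega$ itself (in the paper it is read off from the fiberwise average of the $\mathrm{d}\bar v$-component of the $(0,1)$-part of a primitive of $\omega_{\mathrm{sf},a}[\sigma]-\omega$). There is no reason this distinguished section coincides with your pre-chosen algebraic one, so for your fixed $\sigma'$ the form $\alpha$ is exact but in general not $i\partial\bar\partial$-exact at all, and no weighted $\partial\bar\partial$-lemma can produce $\phi_1$.

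This is also exactly where your step (iii) would break if carried out: the $\bar\partial$-harmonic piece $\mu$ in your weighted decomposition carries the fiberwise-average $\mathrm{d}\bar v$-component just described, and it cannot be discarded — since the conclusion is false for the wrong $\sigma'$, either no polynomially bounded decomposition of the asserted form exists or $\partial\mu\neq0$; the Kähler-identity/integration-by-parts step is precisely what fails for this piece. Note also that you defer the weighted Fredholm/Hodge package (indicial roots, cokernel matching) as ``the main obstacle,'' so even on its own terms the plan is incomplete. The paper's argument is organized to avoid all of this: it first produces a polynomial-growth primitive $\zeta$ of $\omega_{\mathrm{sf},a}[\sigma]-\omega$ via the radial homotopy $F$ and Cartan's formula, then \emph{chooses} $\sigma'$ so that the fiberwise average of the $\mathrm{d}\bar v$-coefficient of the $(0,1)$-part of $\zeta+\tilde\zeta$ vanishes, after which the $\bar\partial$-equation is solved elementarily fiber by fiber on the torus fibers and then in one variable on $\Delta^*$; no weighted Hodge theory for $\bar\partial$ on the end is needed. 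If you want to salvage your scheme, the choice of $\sigma'$ must be built in as the step that kills the fiberwise harmonic part of $\eta^{0,1}$, at which point you are essentially reproducing the paper's proof.
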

  \begin{remark}
  Compared to Hein's Claim 1 in page 382 of \cite{Hein}, the key improvement in our paper is that both $\sigma'$ and $\phi_1$ grow at most polynomially.
  \end{remark}
  \begin{proof}
  As Hein did in \cite{Hein}, there exists a real 1-form $\zeta$ on $M|_{\Delta^*}$ such that $\mathrm{d}\zeta=\omega_{\mathrm{sf},a}[\sigma]-\omega$.
  Choose the map $F$ as in Theorem \ref{First-betti-number}. By Cartan's formula, the homotopy between $F$ and the identity map implies that $F^*\omega-\omega=\mathrm{d}\zeta_1$ and $F^*\omega_{\mathrm{sf},a}[\sigma]-\omega_{\mathrm{sf},a}[\sigma]=\mathrm{d}\zeta_2$ for some real polynomial growth 1-forms $\zeta_1$ and $\zeta_2$. However, $\mathrm{d}F^*\zeta=F^*\omega_{\mathrm{sf},a}[\sigma]-F^*\omega$ for some polynomial growth 1-form $F^*\zeta$. In conclusion, we can without loss of generality assume that $\zeta$ grows polynomially.

  Using $\sigma$ as the zero section, any section $\sigma'$ of $z$ can be written as $v=\sigma'(z)$ in local coordinates. Hein calculated that there exists a real 1-form $\tilde\zeta$ such that $\omega_{\mathrm{sf},a}[\sigma']-\omega_{\mathrm{sf},a}[\sigma]=\mathrm{d}\tilde\zeta$ and the (0,1)-part $\tilde\xi$ of $\tilde\zeta$ can be written as
  $$\tilde\xi=-\frac{i}{2}\frac{a}{\mathrm{Im}(\bar\tau_1\tau_2)}[\sigma'(z)(\mathrm{d}\bar v-\bar\Gamma(z,v)\mathrm{d}\bar z)-
  \frac{1}{2}\bar\Gamma(z,\sigma'(z))\mathrm{d}\bar z].$$
  Choose $\sigma'$ so that $\frac{i}{2}\frac{a}{\mathrm{Im}(\bar\tau_1\tau_2)}\sigma'$ equals to the average of the coefficient of $\mathrm{d}\bar v$ term of the (0,1)-part $\xi$ of $\zeta$ on each fiber.
  Then $\sigma'$ and $\tilde\xi$ grow polynomially. Moreover, the average of the coefficient of $\mathrm{d}\bar v$ term of $\xi+\tilde\xi$ on each fiber vanishes. So on each fiber, $\xi+\tilde\xi$ can be written as $i\bar\partial\phi_2$ by solving the $\bar\partial$-equation on each fiber. It's easy to see that $\phi_2$ also grows polynomially. So the (0,1)-form $\xi+\tilde\xi-i\bar\partial\phi_2$ can be written as $f(z,v)\mathrm{d}\bar z$. However, it's $\bar\partial$-closed, so $f(z,v)=f(z)$. By solving the $\bar\partial$-equation on $\Delta^*$, $\xi+\tilde\xi-i\bar\partial\phi_2=i\bar\partial\phi_3$ for some polynomial growth function $\phi_3(z)$. In conclusion
  $$\omega_{\mathrm{sf},a}[\sigma']-\omega=\mathrm{d}[i\bar\partial(\phi_2+\phi_3)-i\partial(\bar\phi_2+\bar\phi_3)]
  =i\partial\bar\partial(\phi_2+\bar\phi_2+\phi_3+\bar\phi_3).$$
  \end{proof}

  \begin{theorem}
  There exists a real smooth polynomial growth function $\phi_4$ such that $\omega+i\partial\bar\partial\phi_4$ is ALG and $$(\omega+i\partial\bar\partial\phi_4)^2=\frac{1}{2}\omega^+\wedge\bar\omega^+$$
  \label{Hein-Tian-Yau}
  \end{theorem}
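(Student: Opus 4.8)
Looking at this theorem, I need to produce an ALG Calabi-Yau metric in the Kähler class of $\omega$ (modulo $i\partial\bar\partial$ of a polynomial-growth function) whose volume form matches the one determined by $\omega^+$. This is a Tian-Yau / Hein type existence result: a weighted Monge-Ampère equation on a complete noncompact manifold with an ALG end.

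=== PROOF PROPOSAL ===

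The plan is to solve the complex Monge-Ampère equation $(\omega+i\partial\bar\partial\phi_4)^2=\tfrac12\omega^+\wedge\bar\omega^+=e^f\omega^2$ on $M=\bar M\setminus D$ by Hein's weighted Tian-Yau scheme, so the main work is to set up a good background metric, estimate the right-hand side $f$, and run the continuity method with weighted Hölder spaces adapted to the ALG geometry.

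First I would fix the reference metric. By Theorem \ref{PartialBarPartialLemma} there is a polynomial-growth function $\phi_1$ and a polynomial-growth holomorphic section $\sigma'$ with $\omega+i\partial\bar\partial\phi_1=\omega_{\mathrm{sf},a}[\sigma']$ on $M|_{\Delta^*}$. Using a cut-off built from $\chi(|z|^\beta-\,\cdot\,)$ I would glue $\omega+i\partial\bar\partial\phi_1$ to $\omega$ across a compact region and call the result $\tilde\omega$; it is a Kähler form cohomologous to $\omega$ (indeed equal to $\omega$ outside a compact set plus $i\partial\bar\partial$ of a compactly-supported-modification of $\phi_1$), and it agrees with the semiflat model $\omega_{\mathrm{sf},a}[\sigma']$ near infinity, hence with the flat ALG model $h$ to all orders in the appropriate sense. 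The point of choosing $a$ equal to the area of a regular fiber with respect to $\omega$ is exactly that $\omega_{\mathrm{sf},a}[\sigma']$ is cohomologous to $\omega$ on $\Delta^*$, which is what makes the gluing produce a globally defined Kähler form. Next I would compute $f$ defined by $\tilde\omega^2=e^f\cdot\tfrac12\omega^+\wedge\bar\omega^+$: on the ALG end $\omega_{\mathrm{sf},a}[\sigma']^2=\tfrac12\omega^+\wedge\bar\omega^+$ because the semiflat metric is Ricci-flat with the same holomorphic volume form, so $f$ is supported (up to exponentially/polynomially small error coming from the interpolation) in a compact region, and in any case $f$ together with all derivatives decays faster than any fixed polynomial rate on the end. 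In particular $\int_M(e^f-1)\tilde\omega^2=0$ automatically since $[\omega^+\wedge\bar\omega^+]$ and $[\tilde\omega^2]$ pair the same way against the fundamental class — or, more carefully, because both sides compute $2\,\mathrm{vol}$ of a Ricci-flat model at infinity; this is the necessary compatibility condition for solvability.

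Then I would invoke the existence theorem for the Monge-Ampère equation on ALG (or more generally, complete manifolds with Euclidean-volume-growth collapsing ends of "ALG type") in the weighted Hölder setting — this is precisely Hein's main analytic package in \cite{Hein}, whose hypotheses (a uniform Sobolev or suitable $C^0$-estimate input, a background metric asymptotic to a flat model, and a right-hand side decaying in the weighted space) are verified by the construction above. Concretely: (i) a priori $C^0$ bound $\|\phi_4\|_{C^0}\le C$ via Moser iteration using the ALG Sobolev inequality from \cite{FirstPaper}, together with the fact that $\phi_4$ can be normalized to tend to $0$ at infinity; (ii) the second-order estimate $\|i\partial\bar\partial\phi_4\|_{C^0}\le C$ by the Yau/Aubin maximum-principle computation, using that the bisectional curvature of $\tilde\omega$ is bounded (it is flat near infinity); (iii) higher-order estimates by Evans-Krylov and Schauder on balls of the holomorphic radius, which on the ALG end is comparable to $r$; (iv) openness in the continuity method via the implicit function theorem in the weighted Hölder spaces $C^{k,\alpha}_{\delta}$ where the weight $r^{\delta}$ with small $\delta>0$ accounts for the mild polynomial growth of $\phi_1$ and the decay of $f$. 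Closedness then follows from (i)-(iii). The solution $\phi_4':=\phi_1$-modification$+\phi_4$ (i.e.\ writing everything back in terms of the original $\omega$) is the desired polynomial-growth function, and $\omega+i\partial\bar\partial\phi_4$ is Ricci-flat Kähler with volume form $\tfrac12\omega^+\wedge\bar\omega^+$; asymptotic flatness of the model plus the decay of $i\partial\bar\partial\phi_4$ gives that it is ALG of some positive order, so it is a gravitational instanton in the required sense (the faster-than-quadratic curvature decay is immediate from the exponential/polynomial decay of $\tilde\omega-h$ and of $i\partial\bar\partial\phi_4$).

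The main obstacle is the analysis on the collapsing ALG end: the end is volume-collapsed at the scale of the elliptic fiber, so the relevant function spaces are not the naive weighted spaces on a cone, and one must work with Hein's notion of weighted Hölder norms measured at the holomorphic radius (comparable to $r$) rather than at unit scale. Verifying that the interpolated background $\tilde\omega$ has curvature and injectivity-radius-rescaled geometry uniformly controlled in these spaces, and that the linearized operator $\Delta_{\tilde\omega}$ is an isomorphism between the correct weighted spaces (no kernel or cokernel for the chosen weight $\delta\in(0,\delta_0)$), is where all the real content sits; everything else is a routine adaptation of Yau's estimates. I expect to cite \cite{Hein} heavily for this linear theory and for the $C^0$-estimate on such ends, contributing only the (already-established, via Theorem \ref{PartialBarPartialLemma}) polynomial growth of the inputs and the observation that $f$ has compact-support-like decay.
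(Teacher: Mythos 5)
Your proposal has a genuine gap at exactly the point where the paper's proof has its main new content: the normalization $\int_M\bigl(\tilde\omega^2-\tfrac12\omega^+\wedge\bar\omega^+\bigr)=0$. You assert this holds ``automatically'' by a pairing/cohomology argument, but $M$ is noncompact and no such argument is available: all that the agreement with the semi-flat metric near infinity gives you is that the difference $\tilde\omega^2-\tfrac12\omega^+\wedge\bar\omega^+$ is compactly supported, hence integrable — its integral is some finite number with no reason to vanish. Worse, Stokes' theorem shows that adding $i\partial\bar\partial$ of a compactly supported (or sufficiently decaying) potential does not change this integral at all, so the defect cannot be absorbed into the Monge--Amp\`ere solution if you want a decaying (ALG) solution; the condition is genuinely an obstruction, and this is precisely why Hein's Theorem 1.3 only produces metrics with $(\omega+i\partial\bar\partial u)^2=\tfrac{\alpha}{2}\omega^+\wedge\bar\omega^+$ for some large constant $\alpha$ rather than with the constant $\tfrac12$ claimed in the statement.

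The paper's proof is organized around fixing exactly this. After gluing $\omega$ to $\omega_{\mathrm{sf},a}[\sigma']$ (with the auxiliary potential $t\phi_5$, as you also do, to restore positivity in the transition region), it arranges $R,t$ so that the mass defect is positive, and then deforms the background at infinity, not by a potential, but by subtracting $\frac{i}{4}\frac{a}{\mathrm{Im}\tau}(1-\chi(\frac{r}{R}-6))\chi(\frac{r}{R}-s)\beta^2|z|^{2\beta-2}\mathrm{d}z\wedge\mathrm{d}\bar z$, i.e.\ a cut-off multiple of the base area form on an annulus whose outer radius grows with $s$. This keeps the form positive for all $s\ge5$, and the total mass $\int_M(\omega_{s,t}^2-\tfrac12\omega^+\wedge\bar\omega^+)$ decreases to $-\infty$ as $s\to\infty$, so the intermediate value theorem produces $s$ with zero defect; only then do Tian--Yau and Hein's Proposition 2.9 apply to give a bounded solution that is actually ALG. Your outline of the PDE machinery (weighted spaces, continuity method, Evans--Krylov, etc.) is essentially a re-derivation of what the paper simply cites, and is not where the difficulty lies; without the mass-adjustment step your scheme either fails (no decaying solution exists when the defect is nonzero) or reproduces only Hein's weaker conclusion with an undetermined constant in front of $\omega^+\wedge\bar\omega^+$. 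A secondary caveat: on the collapsed ALG end the $C^0$ estimate cannot be run through a naive Sobolev/Moser argument as you suggest; this is one of the reasons the paper defers entirely to Tian--Yau and Hein for the analytic existence.
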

  \begin{remark}
  Compared to Hein's Theorem 1.3 of \cite{Hein}, the key improvements in our paper are that $\phi_4$ grows polynomially and that we obtain $\frac{1}{2}\omega^+\wedge\bar\omega^+$ instead of $\frac{\alpha}{2}\omega^+\wedge\bar\omega^+$ for large enough $\alpha$.
  \end{remark}
  \begin{proof}
  To achieve this, we still introduce a real positive bump function $b$ on $\mathbb{C}$ supported in $\{R\le|z|^\beta\le4R\}$ such that $b=1$ on $\{2R\le|z|^\beta\le3R\}$. The involution with the Green function provides a real at most polynomial growth function $\phi_5$ on $\mathbb{C}$ such that $i\partial\bar\partial\phi_5=ib(z)\mathrm{d}z\wedge\mathrm{d}\bar z$.

  Now let's look at the form $\omega+i\partial\bar\partial((1-\chi(\frac{r}{R}-\frac{5}{2}))\phi_1)$. It equals to $\omega$ when $r\le2R$ and $\omega_{\mathrm{sf},a}[\sigma']$ when $r\ge3R$. On the part $2R\le r\le3R$, this form may not be positive. However, as Hein did in Claim 3 of \cite{Hein}, $$\omega_t=\omega+i\partial\bar\partial((1-\chi(\frac{r}{R}-\frac{5}{2}))\phi_1+t\phi_5)$$ is positive for large enough $t$.

  To achieve the integrability condition $\int_M(\omega_1^2-\frac{1}{2}\omega^+\wedge\bar\omega^+)=0$, we start from choosing large enough $R$ and $t$ such that $\omega_{\mathrm{sf},a}[\sigma']$ is close enough to the standard ALG model $\frac{i}{2}\frac{a}{\mathrm{Im}\tau} (\mathrm{d}z^{\beta}\wedge\mathrm{d}\bar z^{\beta}+\mathrm{d}v\wedge\mathrm{d}\bar v)$ and $\int_M(\omega_t^2-\frac{1}{2}\omega^+\wedge\bar\omega^+)>0$.
  Then we consider $$\omega_{s,t}=\omega_t-\frac{i}{4}\frac{a}{\mathrm{Im}\tau}(1-\chi(\frac{r}{R}-6))\chi(\frac{r}{R}-s)  \beta^2|z|^{2\beta-2}\mathrm{d}z\wedge\mathrm{d}\bar z.$$
  It's easy to see that for any $s\ge5$, $\omega_{s,t}$ must be positive. What's more, since $\int_M(\omega_{s,t}^2-\frac{1}{2}\omega^+\wedge\bar\omega^+)$ decreases to negative infinity when $s$ goes to infinity, by intermediate value theorem, there exists $s$ such that the integrability condition is achieved.
  By the work of Tian-Yau \cite{TianYau}, there exists a real smooth bounded function $\phi_6$ such that $(\omega_{s,t}+i\partial\bar\partial\phi_6)^2=\frac{1}{2}\omega^+\wedge\bar\omega^+$. By Proposition 2.9 of \cite{Hein}, the solution $\omega_{s,t}+i\partial\bar\partial\phi_6$ is actually ALG.
  \end{proof}

  Thus, the first part of Theorem \ref{Main-Theorem-2} has been proved. The second part is quite simple:
  \begin{theorem}
  Suppose there exist two ALG metrics $\omega_j=\omega+i\partial\bar\partial\phi_j, j=7, 8$, satisfying $\omega_7^2=\omega_8^2=\frac{1}{2}\omega^+\wedge\bar\omega^+$, $|\nabla^m(\omega_7-\omega_8)|=O(r^{-m-\delta})$ and $|\phi_j|=O(r^N)$ for all $j=7, 8, m\ge 0$ and some $\delta, N>0$. Then $\omega_7=\omega_8$.
  \end{theorem}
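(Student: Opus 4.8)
The plan is to linearize the complex Monge--Ampère equation and reduce it to a Liouville property for harmonic functions on ALG manifolds. Set $\phi:=\phi_7-\phi_8$, so $\omega_7=\omega_8+i\partial\bar\partial\phi$ and $\phi=O(r^N)$. Subtracting $\omega_7^2=\omega_8^2$ gives
$$0=\omega_7^2-\omega_8^2=(\omega_7-\omega_8)\wedge(\omega_7+\omega_8)=i\partial\bar\partial\phi\wedge(\omega_7+\omega_8).$$
The form $\omega':=\tfrac12(\omega_7+\omega_8)$ is closed and positive, hence K\"ahler; writing $g'$ for its metric and using that in complex dimension two $i\partial\bar\partial\phi\wedge\omega'=\tfrac12(\mathrm{tr}_{\omega'}i\partial\bar\partial\phi)\,(\omega')^2$, the identity above says exactly that $\Delta_{g'}\phi=0$ on all of $M$. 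Since $|\nabla^m(\omega_7-\omega_8)|=O(r^{-m-\delta})$ and $g'=\tfrac12(g_7+g_8)$ (the complex structure $I$ being fixed), the metric $g'$ is ALG asymptotic to the same standard model as $g_7$, so the weighted Fredholm/Hodge theory for the ALG end from \cite{FirstPaper} (in particular Theorem 4.12) applies to $\Delta_{g'}$.

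The heart of the argument is to show that every $g'$-harmonic function on $M$ of polynomial growth is pluriharmonic. The $I$-holomorphic function $z$ of Theorem \ref{Compactification}, asymptotic to $u^{1/\beta}$, is globally defined on $M$, so for each $j\ge 1$ the functions $\mathrm{Re}(z^j)$ and $\mathrm{Im}(z^j)$ are globally defined, $g'$-pluriharmonic, and of growth $r^{j/\beta}$. On the standard ALG model the harmonic functions of polynomial growth are precisely the constants, these $\mathrm{Re}(z^j),\mathrm{Im}(z^j)$, and one extra solution $\log r$ coming from the degenerate indicial root $0$ of the cone Laplacian; all model solutions that are nonconstant along the $\mathbb{T}^2$-fibers grow or decay exponentially. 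I would then argue, either by the indicial-root dimension count for $\Delta_{g'}$ in weighted spaces or by a leading-term peeling in the style of the proof of Theorem \ref{First-betti-number} (using weighted $\partial\bar\partial$-solvability on the ALG end to subtract from $\phi$ the pluriharmonic pieces $\mathrm{Re}(a_j z^j)$ and a constant, lowering the growth rate at each step), that $\phi=\mathrm{Re}(Q(z))+c_0+\psi$ for some polynomial $Q$, some constant $c_0$, and a $g'$-harmonic function $\psi=O(r^{-\epsilon})$ with $\epsilon>0$. The coefficient of the possible $\log r$ term must vanish because $\phi$ is globally harmonic: $\int_{\{r=R\}}\star_{g'}\mathrm{d}\phi=\int_{\{r\le R\}}\Delta_{g'}\phi\,\mathrm{d}V_{g'}=0$ for all large $R$, whereas a nonzero $\log r$ leading term contributes an asymptotically $R$-independent, nonzero flux.

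Finally, $\psi$ is a bounded $g'$-harmonic function on the complete manifold $M$ tending to $0$ at infinity, so the maximum principle forces $\psi\equiv 0$. Hence $\phi=\mathrm{Re}(Q(z))+c_0$ is pluriharmonic, so $i\partial\bar\partial\phi=0$ and $\omega_7=\omega_8$.

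I expect the main obstacle to be the middle step: pinning down the asymptotic behavior of the polynomially growing $g'$-harmonic function $\phi$. This needs the weighted elliptic theory on the ALG end together with a correct inventory of the model's indicial roots --- in particular recognizing the degenerate $\log r$ solution at root $0$ and eliminating it via the global harmonicity of $\phi$, and recognizing that all model solutions of positive rate extend globally as the explicit pluriharmonic functions $\mathrm{Re}(z^j),\mathrm{Im}(z^j)$. The linearization and the concluding maximum principle are routine.
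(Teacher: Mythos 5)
Your proposal is correct and follows essentially the same route as the paper: both pass to the average K\"ahler form $\tfrac12(\omega_7+\omega_8)$, observe that $\phi_7-\phi_8$ is harmonic for it, expand the polynomially growing harmonic function on the ALG end into pluriharmonic leading terms $a_kz^k+b_k\bar z^k$ (the paper cites Theorem 4.13 of \cite{FirstPaper} for this step, which you reconstruct via indicial roots, including ruling out the $\log r$ mode by the flux argument), peel these off iteratively, and finish with the maximum principle to conclude $i\partial\bar\partial(\phi_7-\phi_8)=0$. The extra care you take with the $\log r$ solution is a detail the paper absorbs into its citation, not a genuine divergence in method.
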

  \begin{proof}
  It's easy to see that $\tilde\omega=\omega+i\partial\bar\partial(\phi_7+\phi_8)/2$ also defines a K\"ahler metric which is asymptotic to the standard ALG model. Since $\tilde\omega\wedge i\partial\bar\partial(\phi_7-\phi_8)=0$,
  $\phi_7-\phi_8$ is harmonic with respect to $\tilde\omega$. By Theorem 4.13 of \cite{FirstPaper}, it's asymptotic to $a_nz^{n}+b_n\bar{z}^n$ for some constants $a_n$ and $b_n$. The difference has smaller order. Repeat the procedure until the order is smaller than 0. The maximal principle implies that the difference is 0.
  In other words, $$\phi_7-\phi_8=\sum_{k=0}^{n}a_kz^{k}+b_k\bar z^{k}.$$
  \end{proof}

  \begin{theorem}
  The third part of Theorem \ref{Main-Theorem-2} holds.
  \end{theorem}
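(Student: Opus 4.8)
The plan is to run the constructions of Theorems \ref{Main-Theorem-1}, \ref{PartialBarPartialLemma} and \ref{Hein-Tian-Yau} in reverse. After the stated hyperk\"ahler rotation, Theorem \ref{Main-Theorem-1} identifies $(M,I)$ with $\bar M\setminus D$ for a rational elliptic surface $(\bar M,z)$ whose fiber $D=\{z=\infty\}$ is not regular, and as was observed in the course of its proof the holomorphic symplectic form $\omega^+=\omega^2+i\omega^3$ extends to a meromorphic $2$-form on $\bar M$. Since $\omega^+$ is nowhere zero on $M$ it is a meromorphic section of $K_{\bar M}=\mathcal{O}(-D)$ with divisor supported on the components of $D$; because those components are linearly independent in $\mathrm{NS}(\bar M)$, that divisor must be exactly $-D$. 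This proves the first two assertions: $\omega^+$ is rational with polar divisor $[D]$.

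To obtain $\omega^1=\omega+i\partial\bar\partial\phi$ I would first lift $[\omega^1]\in H^2(M,\mathbb{R})$ to $\bar M$. By Theorem \ref{First-betti-number}, $H^1(M,\mathbb{R})=0$, and $H_1(D)=0$ for each of the fiber types here (their dual graphs are trees). Hence the long exact sequence of the pair $(\bar M,M)$ --- equivalently Mayer--Vietoris with a tubular neighborhood of $D$ --- gives $0\to H^2(\bar M,M)\to H^2(\bar M)\to H^2(M)\to 0$, with $H^2(\bar M,M)\cong H_2(D)$ spanned by the classes of the irreducible components $\Theta_i$ of $D$. So $[\omega^1]=c|_M$ for some $c\in H^2(\bar M,\mathbb{R})$, well defined modulo $\langle\mathrm{PD}[\Theta_i]\rangle$, and since $p_g(\bar M)=0$ the class $c$ is automatically of type $(1,1)$. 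The key step is then to choose the lift $c$ inside the K\"ahler cone of $\bar M$, using the Nakai--Moishezon criterion. An irreducible curve contained in a singular fiber other than $D$ lies in $M$, so $c\cdot C=\int_C\omega^1>0$ independently of the lift; the self-intersection form of $D$ is negative semidefinite with kernel the multiplicity vector $(m_i)$, so $(c\cdot\Theta_i)_i$ runs over the full hyperplane $\{\sum m_i x_i=c\cdot[F]\}$, and $c\cdot[F]=\int_{F_{\mathrm{reg}}}\omega^1>0$ lets us take all $c\cdot\Theta_i>0$; finally, adding a large multiple of the fiber class $[F]=\sum m_i\mathrm{PD}[\Theta_i]$ (which restricts to $0$ on $M$) makes $c^2>0$ and $c\cdot C>0$ for every curve with $C\cdot F>0$. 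Thus $c$ is represented by a K\"ahler form $\omega$ on $\bar M$ with $[\omega|_M]=[\omega^1]$.

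Then $\omega^1-\omega|_M$ is a $d$-exact real $(1,1)$-form on $M$, and I would convert it into $i\partial\bar\partial\phi$ with $\phi$ of polynomial growth exactly as in the proof of Theorem \ref{PartialBarPartialLemma}: write it as $d\eta$ with $\eta$ of polynomial growth using the homotopy $F$ of Theorem \ref{First-betti-number}, then solve the $\bar\partial$-equation fiberwise and over $\Delta^*$ with polynomial-growth control from the weighted Hodge theory of \cite{FirstPaper}. (Alternatively, feeding $(\bar M,z,\omega^+,\omega)$ into part (1) produces a Hein metric with the same asymptotic ALG model as $\omega^1$, and equality then follows from the uniqueness in part (2).)

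I expect the real obstacle to be the final clause. For $D$ of type II$^*$, III$^*$ or IV$^*$ --- the three isotrivial cases with $\tau=e^{2\pi i/3}$ or $i$ and very small Mordell--Weil rank --- the compactification furnished by Theorem \ref{Compactification} need not be the model on which the steps above, together with the matching of asymptotic data needed for part (2), combine cleanly. One then has to pass to another rational elliptic surface $\bar M'$ with $\bar M'\setminus D'$ biholomorphic to $M$ and $D'$ of the same type, related to $\bar M$ by birational modifications supported over $D$, and check that the new model does the job. Carrying this out requires inspecting the explicit $\tilde E_6$, $\tilde E_7$, $\tilde E_8$ fiber configurations case by case, which I expect to be the main technical difficulty.
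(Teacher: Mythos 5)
Your outline diverges from the paper and, more importantly, has a genuine gap at its central step. The gap is the sentence ``Then $\omega^1-\omega|_M$ is a $d$-exact real $(1,1)$-form on $M$, and I would convert it into $i\partial\bar\partial\phi$ \dots exactly as in the proof of Theorem \ref{PartialBarPartialLemma}.'' The proof of Theorem \ref{PartialBarPartialLemma} does \emph{not} show that an exact polynomial-growth $(1,1)$-form on the end is $i\partial\bar\partial$ of a polynomial-growth function. When one writes the form as $\mathrm{d}\zeta$ and tries to solve the fiberwise $\bar\partial$-problem, there is an obstruction: the fiberwise average of the $\mathrm{d}\bar v$-coefficient of the $(0,1)$-part of $\zeta$. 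In that theorem the obstruction is absorbed precisely by changing the section, i.e.\ by replacing $\omega_{\mathrm{sf},a}[\sigma]$ with $\omega_{\mathrm{sf},a}[\sigma']$ for a new holomorphic section $\sigma'$; the conclusion is $i\partial\bar\partial$-exactness only \emph{after} this fiberwise translation. This obstruction is invisible to de Rham cohomology (the difference $\omega_{\mathrm{sf},a}[\sigma']-\omega_{\mathrm{sf},a}[\sigma]$ is always exact), so arranging $[\omega|_M]=[\omega^1]$ by a Nakai--Moishezon choice of lift does not remove it. Consequently your main line of argument, which never sees the section discrepancy, cannot by itself produce the potential $\phi$; and your parenthetical alternative is circular, since the uniqueness statement in part (2) compares two metrics that are \emph{already} given as $\omega+i\partial\bar\partial\phi_j$ over the same background $\omega$ with polynomial-growth potentials --- exactly what is to be proved for $\omega_{\mathrm{ALG}}$ --- and also presupposes a matching of the asymptotic models.

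The paper's proof is organized around exactly this obstruction. It applies Theorem \ref{PartialBarPartialLemma} twice, to a K\"ahler form $\omega_0$ on $\bar M$ and to $\omega_{\mathrm{ALG}}$, obtaining two sections $\sigma'$, $\sigma''$, and then studies the fiber translation $T(z,v)=(z,v+\sigma''(z)-\sigma'(z))$. For $\beta\le 1/2$ (I$_0^*$, II, III, IV) a decay analysis of $\omega_{\mathrm{sf},a}[\sigma']-\omega_{\mathrm{ALG}}$ (using the asymptotic results of the first and second papers) shows the translation function extends holomorphically across $D$ in Kodaira's local models, after removal of singularities and Hartogs; then $(T^{-1})^*\omega_0$ glued with the cut-off and bump-function corrections is the required K\"ahler form. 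For II$^*$, III$^*$, IV$^*$ ($\beta>1/2$) the translation function can acquire a pole coming from the $u^{1/\beta-2}\mathrm{d}u\wedge\mathrm{d}\bar v$ term, and the remedy is not a case-by-case inspection of the $\tilde E_6,\tilde E_7,\tilde E_8$ configurations, as you suggest, but simply to recompactify $M$ using the translated section $\sigma+\sigma''-\sigma'$ as the zero section; on the new $\bar M$ the desired potential exists. So the ``final clause'' you flagged is not an add-on difficulty to an otherwise complete argument: the extension (or failure of extension) of the section translation across $D$ is the actual content of part (3), and it is missing from your proposal.
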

  \begin{proof}
  Let $(M,\omega_{\mathrm{ALG}},\omega^2,\omega^3)$ be the ALG gravitational instanton in the third part of Theorem \ref{Main-Theorem-2}. Let $a$ be the area of each regular fiber with respect to $\omega_{\mathrm{ALG}}$. Now pick a K\"ahler metric $\omega_0$ on $\bar M$ whose area of each regular fiber is $a$. Then $\omega_{\mathrm{sf},a}[\sigma']=\omega_0+i\partial\bar\partial\phi_9$ for some holomorphic section $\sigma'$ on $\{|z|^\beta\ge R\}$ and some real function $\phi_9$.
  It's easy to see that Theorem \ref{PartialBarPartialLemma} also holds for $\omega_{\mathrm{ALG}}$, i.e.
  $\omega_{\mathrm{sf},a}[\sigma'']=\omega_{\mathrm{ALG}}+i\partial\bar\partial\phi_{10}$ for some holomorphic section $\sigma''$ on $\{|z|^\beta\ge R\}$.

  When $D$ is of type I$_0^*$, II, III, or IV, i.e. $\beta\le1/2$, our goal is to show that the action $T(z,v)=(z,v+\sigma''(z)-\sigma'(z))$ as well as its inverse can be extended across $D$. If it's true, then
  $$\omega_{\mathrm{ALG}}=\omega_{\mathrm{sf},a}[\sigma'']-i\partial\bar\partial\phi_{10}
  =(T^{-1})^*\omega_{\mathrm{sf},a}[\sigma']-i\partial\bar\partial\phi_{10}=(T^{-1})^*\omega_0-i\partial\bar\partial\phi_{11}.$$
  So $\omega=\omega_{\mathrm{ALG}}+i\partial\bar\partial((1-\chi(\frac{r}{R}-\frac{5}{2}))\phi_{11}+t\phi_5)$ will be the required K\"ahler form on $\bar M$ for large enough $t$.

  To understand the structure near $D$, we start from the elliptic surface over $\Delta=\{|\tilde u|\le R^{-1}\}$ constructed by $(\Delta\times\mathbb{C})/(\tilde u,v)\sim(\tilde u,v+m\tau_1(\tilde u)+n\tau_2(\tilde u))$. Take the quotient by $(\tilde u,v)\sim(e^{2\pi i\beta}\tilde u,e^{2\pi i\beta}v)$, then there are several orbifold points in the central fiber. As Kodaira did in \cite{KodairaEllipticSurface}, those orbifold points can be resolved by replacing the neighborhoods by the non-singular models $N_{+m}$ constructed in page 583 of \cite{KodairaEllipticSurface}. Then blow down exceptional curves if they exist. $M|_{\Delta^*}\cup D$ is biholomorphic to such model by the relationship $z=u^{1/\beta}=\tilde z^{-1}=\tilde u^{-1/\beta}$.

  In those coordinates, if $T$ is given by $T(\tilde u,v)=(\tilde u,v+f(\tilde u)\tilde u)$, then by the proof of Theorem \ref{PartialBarPartialLemma}, $\frac{i}{2}\frac{af(\tilde u)\tilde u}{\mathrm{Im}(\bar\tau_1\tau_2)}$ will be the average of the coefficient of $\mathrm{d}\bar v$ term of the (0,1)-part $\xi$ of $\zeta$ on each fiber, where $\zeta$ is a real smooth polynomial growth 1-form satisfying $\mathrm{d}\zeta=\omega_{\mathrm{sf},a}[\sigma']-\omega_{\mathrm{ALG}}$. By Main Theorem 1 of \cite{SecondPaper}, the difference between the two ALG metrics $\omega_{\mathrm{ALG}}$ and $\omega_{\mathrm{sf},a}[\sigma']$ is bounded by $|u|^{-2}$. By Theorem 4.4 of \cite{SecondPaper}, $\zeta=(\mathrm{d}^*\mathrm{d}+\mathrm{d}\mathrm{d}^*)\psi_1$ and $\omega_{\mathrm{sf},a}[\sigma']-\omega_{\mathrm{ALG}}=(\mathrm{d}^*\mathrm{d}+\mathrm{d}\mathrm{d}^*)\psi_2$ for some $\psi_1$ and $\psi_2$ on $M|_{\Delta^*}$. So $(\mathrm{d}^*\mathrm{d}+\mathrm{d}\mathrm{d}^*)(\mathrm{d}^*\mathrm{d}\psi_1-\mathrm{d}^*\psi_2)=0$. Therefore, the leading term of $\mathrm{d}^*\mathrm{d}\psi_1-\mathrm{d}^*\psi_2$ must be the linear combinations of $u^\delta\mathrm{d}u$, $\bar u^\delta\mathrm{d}u$, $u^{\delta}\mathrm{d}v$, $\bar u^{\delta}\mathrm{d}v$ and their conjugates. However, $\mathrm{d}(\mathrm{d}^*\mathrm{d}\psi_1-\mathrm{d}^*\psi_2)=\mathrm{d}^*\mathrm{d}\psi_2$ has small order. So if $\delta$ is large, the leading term of $\mathrm{d}^*\mathrm{d}\psi_1-\mathrm{d}^*\psi_2$ must be the linear combinations of $u^\delta\mathrm{d}u$ and its conjugate. However, such kind of term can be written as the linear combinations of $\mathrm{d}z^m$ and its conjugate for some integer $m$. We can then subtract the leading term from $\mathrm{d}^*\mathrm{d}\psi_1-\mathrm{d}^*\psi_2$ and repeat the process. Finally, it's easy to see that $f(\tilde z)=f(\tilde u)$ is bounded by $|\tilde z|^{-\epsilon}$ for any small positive $\epsilon$. By removal of singularity theorem of holomorphic functions on the punctured disc $\Delta^*$, $f(\tilde z)$ can be extended to a holomorphic function on $\Delta$.

  Therefore, the induced map of $T$ on the resolution is holomorphic outside the central curves in $N_{+m}$ and continuous across those curves. By removal of singularity theorem, it can be extended holomorphically. Then, in the blow down procedure, the induced map is holomorphic outside the blow down of the exceptional curves. By Hartog's theorem, it can be extended on $\bar M|_\Delta$. Similarly, $T^{-1}$ can also be extended.

  When $D$ is of type II$^*$, III$^*$, IV$^*$, i.e. $1/2<\beta<1$, the arguments above fail because the meromorphic function $f$ may have a pole at $\{\tilde z=0\}$ corresponding to the term $u^{1/\beta-2}\mathrm{d}u\wedge\mathrm{d}\bar v$ in the difference $\omega_{\mathrm{sf},a}[\sigma']-\omega_{\mathrm{ALG}}$. However, recall that in \cite{FirstPaper}, we used the section $\sigma$ as zero section to compactify $M$ into $\bar M$. If we use the section $\sigma+\sigma''-\sigma'$ instead, then we may get a different $\bar M$. For this new choice of $\bar M$, the form $\omega_{\mathrm{sf},a}[\sigma'']+i\partial\bar\partial\phi_{12}=\omega_{\mathrm{ALG}}+i\partial\bar\partial\phi_{13}$ is a smooth K\"ahler form on $\bar M\cap\{|\tilde z|\le R^{-1/\beta}\}$ for some real smooth polynomial growth functions $\phi_{12}, \phi_{13}$ on $M\cap\{|\tilde z|\le R^{-1/\beta}\}$. So $\omega=\omega_{\mathrm{ALG}}+i\partial\bar\partial((1-\chi(\frac{r}{R}-\frac{5}{2}))\phi_{13}+t\phi_5)$ will be the required K\"ahler form on the new choice of $\bar M$ for large enough $t$.
  \end{proof}

 \section{Gluing of ALH gravitational instantons}

  In this section, we will prove that the gluing of any ALH gravitational instanton with itself is a K3 surface. We learned this idea as well as some initial set-ups of the gluing construction \cite{Donaldson} from the lecture of Sir Simon Donaldson in the spring of 2015 at Stony Brook University. We will also construct a counterexample of ALG Torelli Theorem when $D$ is of type II$^*$, III$^*$, or IV$^*$.

  We will use the notations of Kovalev and Singer \cite{KovalevSinger} in order to apply their results.
  Pick two copies of $M$. Define $t_1=r$ on the first copy $M_1$. Define $t_2=-r$ on the second copy $M_2$. For any gluing parameters $(\rho,\Theta)\in[R+8,\infty)\times\mathbb{T}^3$, the gluing manifold $M_{\rho,\Theta}$ is defined by truncating the two manifolds at $t_j=\pm\rho$ and identifying the boundary points $(\rho,\theta)\in M_1$ with the points $(-\rho,\Theta-\theta)\in M_2$. On $M_{\rho,\Theta}$, the function $t$ is defined by $t=t_1-\rho=t_2+\rho$. The picture can be found in page 10 of \cite{KovalevSinger}.

  Our metric on $M_{\rho,\Theta}$ is slightly different from \cite{KovalevSinger}. In fact, there are three K\"ahler forms $\omega^1$, $\omega^2$ and $\omega^3$ on $M$. The closed forms
  $$\omega^i-\omega^i_{\mathrm{flat}}=a^{i}_{j}(r,\theta)\mathrm{d}r\wedge\mathrm{d}\theta^j+
  b^{i}_{jk}(r,\theta)\mathrm{d}\theta^j\wedge\mathrm{d}\theta^k$$
  are very small on $\{\rho-1\le r\le\rho+1\}$ by Definition \ref{ALH-definition}.
  Now define $$\phi^i=[\int_{\rho}^{r}a^i_j(s,\theta)\mathrm{d}s]\mathrm{d}\theta^{j}.$$
  Then $$\mathrm{d}\phi^i=a^{i}_{j}(r,\theta)\mathrm{d}r\wedge\mathrm{d}\theta^j+
  [\int_{\rho}^{r}\frac{\partial}{\partial \theta^k}a^i_j(s,\theta)\mathrm{d}s]\mathrm{d}\theta^{k}\wedge\mathrm{d}\theta^{j}$$
  $$=a^{i}_{j}(r,\theta)\mathrm{d}r\wedge\mathrm{d}\theta^j+
  [\int_{\rho}^{r}\frac{\partial}{\partial r}b^i_{jk}(s,\theta)\mathrm{d}s]\mathrm{d}\theta^{j}\wedge\mathrm{d}\theta^{k}$$
  $$=a^{i}_{j}(r,\theta)\mathrm{d}r\wedge\mathrm{d}\theta^j+
  b^i_{jk}(r,\theta)\mathrm{d}\theta^{j}\wedge\mathrm{d}\theta^{k}
  -b^i_{jk}(\rho,\theta)\mathrm{d}\theta^{j}\wedge\mathrm{d}\theta^{k}.$$
  Therefore $$\omega^i-\omega^i_{\mathrm{flat}}=\mathrm{d}\phi^i+
  b^i_{jk}(\rho,\theta)\mathrm{d}\theta^{j}\wedge\mathrm{d}\theta^{k}$$
  are cohomologous to the forms $b^i_{jk}(\rho,\theta)\mathrm{d}\theta^{j}\wedge\mathrm{d}\theta^{k}$ on $\mathbb{T}^3$.

  Notice that any closed form on $\mathbb{T}^3$ can be cohomologous to a form with constant coefficients and any 2-form with constant coefficients is invariant under the map $\theta\rightarrow\Theta-\theta$. Therefore, when we glue the part $\{\rho-1\le t_1\le\rho+1\}$ on $M_1$ with the part $\{\rho-1\le-t_2\le\rho+1\}$ on $M_2$, the difference $\omega^i_{M_2}-\omega^i_{M_1}=\mathrm{d}\psi^i$ for some small $\psi^i$. Now define the forms $\omega^i_{\rho,\Theta}$ on $M_{\rho,\Theta}$ by $$\omega^i_{\rho,\Theta}=\omega^i_{M_1}+\mathrm{d}((1-\chi(t))\psi^i)=\omega^i_{M_2}-\mathrm{d}(\chi(t)\psi^i).$$
  Then $\omega^i_{\rho,\Theta}$ are three closed forms and $|\nabla^m(\omega^i_{\rho,\Theta}-\omega^i_{M_j})|=O(e^{-\lambda_1\rho})$ for all $m\ge 0$.

  Now we can call the linear span of $\omega^i_{\rho,\Theta}$ the ``self-dual" space. The orthogonal complement of the ``self-dual" space under wedge product is called the ``anti-self-dual" space. These two spaces determine a star operator. It's well known that the star operator determines a conformal class of metrics. The the conformal factor is determined by requiring the volume form to be $\frac{1}{2}(\det(\omega^i_{\rho,\Theta}\wedge\omega^j_{\rho,\Theta}))^{1/3}$. The resulting metric is called $g_{\rho,\Theta}$. It's slightly different from \cite{KovalevSinger}, but it satisfies all the properties needed in \cite{KovalevSinger}.

  Now we define three operators on the space of self-dual 2-forms by $$P_1\phi=e^{-\delta t_1}(\mathrm{d}^*\mathrm{d}+\mathrm{d}\mathrm{d}^*)(e^{\delta t_1}\phi),$$
  $$P_2\phi=e^{-\delta t_2}(\mathrm{d}^*\mathrm{d}+\mathrm{d}\mathrm{d}^*)(e^{\delta t_2}\phi),$$ and $$P_{\rho,\Theta}\phi=e^{-\delta t}(\mathrm{d}^*\mathrm{d}+\mathrm{d}\mathrm{d}^*)(e^{\delta t}\phi),$$
  where $\delta<\lambda_1/100$ is a small positive number. It's easy to prove the following theorem:

  \begin{theorem}
  (1) $P_1,P_2,P_{\rho,\Theta}$ are Fredholm operators from $W^{k+2,2}$ to $W^{k,2}$ for any $k\ge 0$. In other words, the kernels are finite dimensional and the cokernels, i.e. the kernels of $P_1^*,P_2^*,P_{\rho,\Theta}^*$ are also finite dimensional. The range is the $L^2$-orthogonal complement of the cokernel. The operator from the $L^2$-orthogonal complement of the kernel to the range is an isomorphism.

  (2) $\ker P_1=\mathrm{span}\{e^{-\delta t_1}\omega^i\}$, $\mathrm{coker} P_1=\{0\}$,

  $\ker P_2=\{0\}$, $\mathrm{coker} P_1=\mathrm{span}\{e^{\delta t_2}\omega^i\}$,

  $\mathrm{span}\{e^{-\delta t}\omega^i_{\rho,\Theta}\}\subset\ker P_{\rho,\Theta}$, $\mathrm{span}\{e^{\delta t}\omega^i_{\rho,\Theta}\}\subset\mathrm{coker} P_{\rho,\Theta}$.
  \label{Ker-and-Coker-of-P}
  \end{theorem}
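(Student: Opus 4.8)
We treat the three operators in two groups. Since $M_{\rho,\Theta}$ is closed, $t$ a globally defined smooth function on it, and $\mathrm{d}^*\mathrm{d}+\mathrm{d}\mathrm{d}^*$ a second order elliptic operator with diagonal principal symbol, $P_{\rho,\Theta}=e^{-\delta t}(\mathrm{d}^*\mathrm{d}+\mathrm{d}\mathrm{d}^*)e^{\delta t}$ is the conjugation of an elliptic operator on a closed manifold by a smooth positive function, hence Fredholm $W^{k+2,2}\to W^{k,2}$ with closed range; all the abstract assertions of (1) then follow from elliptic/Hodge theory and standard functional analysis, its $L^2$ formal adjoint being $P_{\rho,\Theta}^*=e^{\delta t}(\mathrm{d}^*\mathrm{d}+\mathrm{d}\mathrm{d}^*)e^{-\delta t}$, with $\mathrm{coker}\,P_{\rho,\Theta}=\ker P_{\rho,\Theta}^*$, the range equal to its $L^2$-orthogonal complement, and $P_{\rho,\Theta}$ restricting to an isomorphism from the $L^2$-orthogonal complement of $\ker P_{\rho,\Theta}$ onto the range. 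For $P_1,P_2$ the metrics $g_{M_j}$ are asymptotic to the flat product $[R,\infty)\times\mathbb{T}^3$, where in a parallel frame of self-dual $2$-forms the Hodge Laplacian is $-\partial_r^2+\Delta_{\mathbb{T}^3}$ applied componentwise; conjugating by $e^{\pm\delta t_j}$ one finds that the indicial roots of $P_1$ and of $P_2$ are $\mp\delta\pm\mu$ with $\mu\in\{0\}\cup\{2\pi|\lambda|:\lambda\in\Lambda^*\setminus\{0\}\}$. As $0<\delta<\lambda_1$, the weight $0$ avoids every indicial root, so by the Lockhart--McOwen theory for manifolds with cylindrical ends $P_1,P_2$ are Fredholm $W^{k+2,2}\to W^{k,2}$ with closed range and satisfy the same abstract structure, with $P_1^*=e^{\delta t_1}(\mathrm{d}^*\mathrm{d}+\mathrm{d}\mathrm{d}^*)e^{-\delta t_1}$, $P_2^*=e^{\delta t_2}(\mathrm{d}^*\mathrm{d}+\mathrm{d}\mathrm{d}^*)e^{-\delta t_2}$ of the same type.

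For the inclusions ``$\supseteq$'' in (2): each $\omega^i$ is parallel, hence $(\mathrm{d}^*\mathrm{d}+\mathrm{d}\mathrm{d}^*)$-harmonic, so $e^{-\delta t_1}\omega^i\in\ker P_1$ and $e^{\delta t_2}\omega^i\in\ker P_2^*$, and these forms lie in $L^2$ because $e^{-2\delta r}$ is integrable on the end. Each $\omega^i_{\rho,\Theta}$ differs from the K\"ahler form $\omega^i_{M_1}$ by an exact form and is self-dual for $g_{\rho,\Theta}$ by construction; a closed self-dual $2$-form on an oriented Riemannian $4$-manifold is automatically harmonic, so $e^{-\delta t}\omega^i_{\rho,\Theta}\in\ker P_{\rho,\Theta}$ and $e^{\delta t}\omega^i_{\rho,\Theta}\in\ker P_{\rho,\Theta}^*=\mathrm{coker}\,P_{\rho,\Theta}$. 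The key analytic input for the reverse inclusions is the Weitzenb\"ock formula: on a hyperk\"ahler $4$-manifold $s=0$ and $W^+=0$, so on self-dual $2$-forms $\mathrm{d}^*\mathrm{d}+\mathrm{d}\mathrm{d}^*=\nabla^*\nabla$. Hence any $L^2$ self-dual $2$-form $\psi$ on $M$ with $(\mathrm{d}^*\mathrm{d}+\mathrm{d}\mathrm{d}^*)\psi=0$ satisfies $\nabla^*\nabla\psi=0$; by asymptotically cylindrical elliptic estimates $\psi$ and $\nabla\psi$ then decay exponentially, integration of $\langle\nabla^*\nabla\psi,\psi\rangle$ against a cutoff gives $\nabla\psi=0$, and a nonzero parallel form is never $L^2$ on a noncompact manifold, so $\psi=0$. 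This yields $\ker P_2=\{0\}$ and $\mathrm{coker}\,P_1=\ker P_1^*=\{0\}$.

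It remains to prove $\ker P_1=\mathrm{span}\{e^{-\delta t_1}\omega^i\}$ and, by the same argument applied to $P_2^*$, $\mathrm{coker}\,P_2=\mathrm{span}\{e^{\delta t_2}\omega^i\}$. Let $\phi\in\ker P_1$ and set $\psi:=e^{\delta t_1}\phi$, a harmonic self-dual $2$-form with $e^{-\delta t_1}\psi\in L^2$. Since $\delta<\lambda_1$, the asymptotic expansion of solutions on a cylindrical end shows that $\psi$ has no exponentially growing term and that its leading, non-decaying part is a harmonic self-dual $2$-form on the model cylinder of at most linear growth, i.e. $\psi=\sum_i(a_i+b_ir)\,\omega^i_{\mathrm{model}}+O(e^{-\lambda_1 r})$ on the end for constants $a_i,b_i$ (no $r^2$ term can occur, as $r^2$ times a parallel form is not harmonic on the model). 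Because $\omega^i$ is parallel and $\nabla^*\nabla\psi=0$ by Weitzenb\"ock, each function $f_i:=\langle\psi,\omega^i\rangle$ is harmonic on all of $M$, so Stokes' theorem on the compact region $\{r<T\}$ gives $\int_{\{r=T\}}\partial_r f_i=0$ for every large $T$; but the expansion gives $f_i=2(a_i+b_ir)+O(e^{-\lambda_1 r})$, whence $\int_{\{r=T\}}\partial_r f_i\to 2b_i\,\mathrm{Vol}(\mathbb{T}^3)$, forcing $b_i=0$ and $\psi$ to be bounded. Then $\psi$ is a bounded solution of $\nabla^*\nabla\psi=0$ converging exponentially to the parallel form $\sum_i a_i\omega^i_{\mathrm{model}}$, so $\nabla\psi$ decays exponentially and the cutoff integration by parts yields $\nabla\psi=0$; a parallel self-dual $2$-form on a hyperk\"ahler $4$-manifold lies in $\mathrm{span}\{\omega^1,\omega^2,\omega^3\}$, hence so does $\psi$, completing the proof.

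The step I expect to be the main obstacle is the last paragraph: because the weight $e^{\delta t_1}$ with $\delta>0$ a priori permits linear growth at the zero indicial mode, pinning $\ker P_1$ down to exactly $\mathrm{span}\{e^{-\delta t_1}\omega^i\}$ requires ruling out linearly growing harmonic self-dual $2$-forms, which is done above by the flux (Stokes) argument for the harmonic functions $\langle\psi,\omega^i\rangle$; alternatively one may run a Lockhart--McOwen index count, in which the jump of the index across the root $0$ equals twice the dimension of the space of parallel self-dual $2$-forms on the model, giving $\dim\ker P_1=\dim\mathcal{H}^{2,+}_{L^2}(M)+3=3$. The remaining ingredients -- the indicial root bookkeeping, the Weitzenb\"ock identity, and the integrations by parts -- are routine.
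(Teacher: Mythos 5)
Your proposal is correct and takes essentially the same route as the paper: the paper cites Lockhart--McOwen for the Fredholm statement and then observes, via the K\"ahler identities, that on the hyperk\"ahler pieces the Hodge Laplacian on self-dual forms is the scalar Laplacian on the coefficients in the parallel frame $\{\omega^i\}$ (and that the $\omega^i_{\rho,\Theta}$ are harmonic because they are closed and self-dual), which is exactly the content of your Weitzenb\"ock identity $\mathrm{d}^*\mathrm{d}+\mathrm{d}\mathrm{d}^*=\nabla^*\nabla$ on $\Lambda^+$. The asymptotic bookkeeping you supply (indicial roots, exclusion of the linearly growing zero-mode by the flux argument, and the cutoff integration by parts) is precisely what the paper's terse ``follows directly from the two facts above'' leaves implicit, so your write-up is a correct, more detailed version of the same argument.
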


  \begin{proof}
  The first part was proved by Lockhart and McOwen in \cite{LockhartMcOwen}.
  As for the second part, on any K\"ahler manifold with K\"ahler form $\omega$, define the operator $L$ by $L\phi=\phi\wedge\omega$. Then by K\"ahler identities, $[L,\bar\partial]=0$ and $[L,\bar\partial^*]=-i\partial$.
  Therefore
  $$[L,\mathrm{d}^*\mathrm{d}+\mathrm{d}\mathrm{d}^*]=2[L,\bar\partial^*\bar\partial+\bar\partial\bar\partial^*]=2[L,\bar\partial^*]\bar\partial
  +2\bar\partial[L,\bar\partial^*]=-2i(\partial\bar\partial+\bar\partial\partial)=0.$$
  In particular for any function $f$, $$(\mathrm{d}^*\mathrm{d}+\mathrm{d}\mathrm{d}^*)(f\omega)=(-\Delta f)\omega,$$ where $\Delta f=-(\mathrm{d}^*\mathrm{d}+\mathrm{d}\mathrm{d}^*) f$ is the ordinary Laplacian operator on functions.
  On hyperk\"ahler manifolds $M_1$ and $M_2$, there are three K\"ahler structures $I$, $J$ and $K$. Therefore,
  $$\sum_{i=1}^{3}(\mathrm{d}^*\mathrm{d}+\mathrm{d}\mathrm{d}^*)(f_i\omega^i)=-\sum_{i=1}^{3}(\Delta f_i)\omega^i.$$
  In other words, the Laplacian on the self-dual forms is exactly the Laplacian on the coefficients.
  On $M_{\rho,\Theta}$, even though the metric is not hyperk\"ahler, $\omega^i_{\rho,\Theta}$ are still harmonic since they are closed self-dual forms. The second part follows directly from the two facts above.
  \end{proof}

  The most important result of \cite{KovalevSinger} is the following theorem: (Proposition 4.2 in their paper)
  \begin{theorem}
  There exists $\rho_{*}>0$ such that for all $\rho\ge\rho_{*}$, the induced map $P''_{\rho,\Theta}$ from the $L^2$-orthogonal complement of $\mathrm{span}\{\chi(t_1-\rho/2)e^{-\delta t_1}\omega^i_{\rho,\Theta}\}$ in $W^{k+2,2}$ to the $L^2$-orthogonal complement of $\mathrm{span}\{\chi(-t_2-\rho/2)e^{\delta t_2}\omega^i_{\rho,\Theta}\}$ in $W^{k,2}$ is an isomorphism and the operator norm of $[P''_{\rho,\Theta}]^{-1}$ is bounded independent of $\rho$ and $\Theta$.
  \label{KovalevSinger}
  \end{theorem}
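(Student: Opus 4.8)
The plan is to establish the isomorphism by the standard two-step argument: first a uniform lower bound (injectivity with estimate), then surjectivity by a Fredholm index count. Since $P_{\rho,\Theta}$ already fails to be an isomorphism on the full spaces (its kernel and cokernel contain the three-dimensional spaces $\mathrm{span}\{e^{-\delta t}\omega^i_{\rho,\Theta}\}$ and $\mathrm{span}\{e^{\delta t}\omega^i_{\rho,\Theta}\}$ by Theorem \ref{Ker-and-Coker-of-P}), the point of cutting down to the $L^2$-orthogonal complements of the explicitly localized forms $\chi(t_1-\rho/2)e^{-\delta t_1}\omega^i_{\rho,\Theta}$ and $\chi(-t_2-\rho/2)e^{\delta t_2}\omega^i_{\rho,\Theta}$ is precisely to quotient out these obstructions while keeping the operator well-defined on nested Sobolev spaces. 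I would first check that $P''_{\rho,\Theta}$ is well-defined, i.e. that the localized forms are genuinely not in the image when projected out, and that $P_{\rho,\Theta}$ maps the chosen domain into the chosen target up to the projection; this is a routine computation using that $\omega^i_{\rho,\Theta}$ is closed and self-dual, so $e^{\pm\delta t}\omega^i_{\rho,\Theta}$ are honest solutions of $P_{\rho,\Theta}(\cdot)=0$, $P^*_{\rho,\Theta}(\cdot)=0$ on the cylindrical region and the cutoff error is supported in $\{|t|\le \rho/2+1\}$ where it is $O(e^{-\delta\rho/2})$-small.

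The heart of the argument is the uniform estimate $\|\phi\|_{W^{k+2,2}}\le C\|P''_{\rho,\Theta}\phi\|_{W^{k,2}}$ with $C$ independent of $\rho$ and $\Theta$. I would prove this by contradiction: suppose there were a sequence $\rho_n\to\infty$, $\Theta_n$, and $\phi_n$ with $\|\phi_n\|_{W^{k+2,2}}=1$ but $\|P''_{\rho_n,\Theta_n}\phi_n\|_{W^{k,2}}\to 0$. By elliptic regularity and a diagonal/compactness argument one extracts limits on each of the two pieces $M_1$ and $M_2$ separately (translating appropriately). On $M_1$ the limit is a solution of $P_1\psi=0$ in $W^{k+2,2}$, hence by Theorem \ref{Ker-and-Coker-of-P}(2) a multiple $\sum c_i e^{-\delta t_1}\omega^i$, which however is killed by the orthogonality to $\chi(t_1-\rho/2)e^{-\delta t_1}\omega^i_{\rho,\Theta}$ in the limit, forcing $c_i=0$; on $M_2$ the limit is a $W^{k+2,2}$ solution of $P_2\psi=0$, and since $\ker P_2=\{0\}$ it vanishes outright. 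A Rellich-type argument on the neck then forces the mass of $\phi_n$ to escape, but the exponential weights $e^{\pm\delta t}$ and the bound $\delta<\lambda_1/100$ (so that the indicial roots of the cylindrical operator are bounded away from $\pm\delta$) preclude any concentration there, contradicting $\|\phi_n\|=1$. This is the step I expect to be the main obstacle: one must carefully track how the weight $e^{\delta t}$ behaves differently on the two ends (decaying into $M_2$, growing into $M_1$) and ensure the cutoff profiles $\chi(t_1-\rho/2)$, $\chi(-t_2-\rho/2)$ remove exactly the right finite-dimensional directions so that no spurious near-kernel survives; the $\Theta$-uniformity is comparatively easy since $\mathbb{T}^3$ is compact and the gluing data depend continuously on $\Theta$.

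Once the uniform lower bound is in hand, the adjoint $P''^{*}_{\rho,\Theta}$ satisfies the same type of estimate by an identical argument (now using $\ker P_1^*=\{0\}$, $\mathrm{coker}\,P_2$ being three-dimensional and removed by the second family of cutoff forms), so $P''_{\rho,\Theta}$ is injective with closed range and its cokernel is trivial; hence it is an isomorphism for all $\rho\ge\rho_*$, and the norm of the inverse is exactly the constant $C$ from the estimate, which was shown to be $\rho$- and $\Theta$-independent. Finally I would remark that the independence of $k$ follows from the interior and boundaryless elliptic estimates for $(\mathrm{d}^*\mathrm{d}+\mathrm{d}\mathrm{d}^*)$ together with the uniform geometry of $(M_{\rho,\Theta},g_{\rho,\Theta})$, so the statement holds for every $k\ge 0$ as claimed.
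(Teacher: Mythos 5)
You should know at the outset that the paper contains no proof of this statement to compare against: it is quoted as Proposition 4.2 of Kovalev--Singer \cite{KovalevSinger} and used as a black box, so your proposal can only be measured against the argument in that reference. Judged on those terms, your outline is a correct strategy, but it is a genuinely different route from the one Kovalev--Singer take. Their argument is constructive: using that $P_1$ is surjective onto $W^{k,2}$ (trivial cokernel) and $P_2$ is injective with cokernel $\mathrm{span}\{e^{\delta t_2}\omega^i\}$, they patch the inverses on the two pieces with cutoff functions supported on the long neck to build an approximate inverse for the cut-down operator, estimate the error by $O(e^{-c\rho})$ because the cutoff derivatives sit where both weights are balanced, and then invert by a Neumann series; this yields the uniform bound on $[P''_{\rho,\Theta}]^{-1}$ directly and explicitly. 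Your route is the contradiction/compactness argument, which gives the same conclusion but hides the work in three places that you assert rather than carry out: (i) uniform local elliptic estimates for $g_{\rho,\Theta}$ so that a normalized sequence $\phi_n$ has subsequential limits in each of the three regimes (fixed distance into $M_1$, fixed distance into $M_2$, and translations along the neck); (ii) the cylinder model statement that $e^{-\delta t}(\mathrm{d}^*\mathrm{d}+\mathrm{d}\mathrm{d}^*)e^{\delta t}$ has trivial $L^2$-kernel on $\mathbb{R}\times\mathbb{T}^3$ because $0<\delta<\lambda_1/100$ avoids the indicial roots $0,\pm\lambda_1,\dots$; and (iii) the pigeonhole/exhaustion step showing that if all local limits vanish then $\|\phi_n\|_{W^{k+2,2}}\to 0$, contradicting the normalization. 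These are standard but they are precisely the content of the theorem, so in a full write-up they must be done, including the passage to the limit in the orthogonality condition, which uses that $\chi(t_1-\rho/2)e^{-\delta t_1}\omega^i\to e^{-\delta t_1}\omega^i$ strongly in $L^2(M_1)$. One simplification worth making: you do not need a separate adjoint estimate for surjectivity. Since $P_{\rho,\Theta}$ is conjugate to the Laplacian on the compact manifold $M_{\rho,\Theta}$ it has index $0$ (kernel and cokernel are both the $b^+=3$ harmonic self-dual forms), and restricting the domain to a codimension-$3$ subspace while projecting the target off a $3$-dimensional subspace preserves index $0$; hence your uniform injectivity estimate alone already forces $P''_{\rho,\Theta}$ to be an isomorphism, with $\|[P''_{\rho,\Theta}]^{-1}\|$ bounded by the constant in that estimate.
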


  It's easy to prove the following lemmas in functional analysis:

  \begin{lemma}
  (1) Suppose $V=\mathrm{span}\{v_1,...v_m\}$ is a finite dimensional subspace in $W^{k,2}$ for some $k\ge 0$. Let $V^{\perp}$ be the $L^2$-orthogonal complement of $V$ in $W^{k,2}$, then $W^{k,2}=V\oplus V^{\perp}$ and
  $$||f+g||_{W^{k,2}}\le||f||_{W^{k,2}}+||g||_{W^{k,2}}\le (1+2C_1)||f+g||_{W^{k,2}}$$
  for all $f\in V$ and $g\in V^{\perp}$, where $C_1=\sup_{f\in V\setminus\{0\}}\frac{||f||_{W^{k,2}}}{||f||_{L^2}}$.

  (2) Suppose $W=\mathrm{span}\{w_1,...w_m\}$ is another subspace. Let $a_{ij}=(w_i,v_j)_{L^2}$. If the matrix $A=\{a_{ij}\}$ is invertible with $A^{-1}=\{a^{ij}\}$, then the composition of the inclusion and the projection maps $P=\mathrm{Proj}_{W^{\perp}}\circ i:V^{\perp}\rightarrow W^{\perp}$ is an isomorphism.
  What's more $$(1+C_2)^{-1}||Pf||_{W^{k,2}}\le||f||_{W^{k,2}}\le C_3||Pf||_{W^{k,2}},$$
  where $$C_2=\sup_{f\in W\setminus\{0\}}\frac{||f||_{W^{k,2}}}{||f||_{L^2}}, C_3=1+C(m)||A^{-1}||\max||v_i||_{L^2}\max||w_j||_{W^{k,2}}.$$
  \label{Projection}
  \end{lemma}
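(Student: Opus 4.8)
The plan is to prove both parts by elementary Hilbert-space and linear-algebra manipulations; there is no analytic content, only careful bookkeeping of constants. For part (1): since $V$ is finite-dimensional it is closed in $W^{k,2}$, and, after replacing $v_1,\dots,v_m$ by a basis of $V$ if they happen to be linearly dependent, the Gram matrix $G_{ij}=(v_i,v_j)_{L^2}$ is invertible. Given $h\in W^{k,2}$, let $f=\sum_i c_i v_i$ where $c=G^{-1}b$ with $b_j=(h,v_j)_{L^2}$; then $h-f\in V^\perp$, and $V\cap V^\perp=\{0\}$ since $f\in V$ with $(f,v_j)_{L^2}=0$ for all $j$ forces $(f,f)_{L^2}=0$. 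This gives $W^{k,2}=V\oplus V^\perp$. The left inequality in the display is the triangle inequality. For the right one, the $L^2$-Pythagorean identity for $f\perp_{L^2}g$ gives $||f||_{L^2}\le||f+g||_{L^2}\le||f+g||_{W^{k,2}}$, hence $||f||_{W^{k,2}}\le C_1||f||_{L^2}\le C_1||f+g||_{W^{k,2}}$; combined with $||g||_{W^{k,2}}\le||f+g||_{W^{k,2}}+||f||_{W^{k,2}}$ this yields $||f||_{W^{k,2}}+||g||_{W^{k,2}}\le(1+2C_1)||f+g||_{W^{k,2}}$.

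For part (2), first apply part (1) with $W$ in place of $V$ to get $W^{k,2}=W\oplus W^\perp$, so $\mathrm{Proj}_{W^\perp}$ is the projection along $W$, $\ker\mathrm{Proj}_{W^\perp}=W$, and $P$ is bounded (the inclusion $V^\perp\hookrightarrow W^{k,2}$ composed with a bounded projection). For injectivity, $Pf=0$ with $f\in V^\perp$ means $f=\sum_i c_i w_i\in W$, so $0=(f,v_j)_{L^2}=\sum_i c_i a_{ij}$ for all $j$, forcing $c=0$ since $A$ is invertible. For surjectivity, given $h\in W^\perp$, solve the system $\sum_i c_i a_{ij}=-(h,v_j)_{L^2}$ for $c$ (possible since $A$ is invertible) and set $f=h+\sum_i c_i w_i$; then $f\in V^\perp$ and $Pf=h$. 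Thus $P$ is a bijection, hence a topological isomorphism.

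It remains to track the constants. Write $f=f_W+Pf$ with $f_W\in W$. Pythagoras gives $||f_W||_{L^2}\le||f||_{L^2}$, hence $||f_W||_{W^{k,2}}\le C_2||f||_{W^{k,2}}$ and $||Pf||_{W^{k,2}}\le||f||_{W^{k,2}}+||f_W||_{W^{k,2}}\le(1+C_2)||f||_{W^{k,2}}$, which is the lower bound. For the upper bound, run the surjectivity computation with $h=Pf$: from $\sum_i c_i a_{ij}=-(Pf,v_j)_{L^2}$ one gets $|c|\le||A^{-1}||\,\sqrt{m}\,\max_i||v_i||_{L^2}\,||Pf||_{W^{k,2}}$, so $||f_W||_{W^{k,2}}\le\sqrt{m}\,|c|\,\max_j||w_j||_{W^{k,2}}\le m\,||A^{-1}||\,\max_i||v_i||_{L^2}\,\max_j||w_j||_{W^{k,2}}\,||Pf||_{W^{k,2}}$, and $||f||_{W^{k,2}}\le||Pf||_{W^{k,2}}+||f_W||_{W^{k,2}}\le C_3||Pf||_{W^{k,2}}$ with $C(m)=m$. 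There is no real obstacle here; the only point needing attention is that two norms ($W^{k,2}$ and $L^2$) appear simultaneously, so one must consistently use the $L^2$-Pythagorean identity to pass from a component to the whole vector and then the finite-dimensional comparison constants to return to the $W^{k,2}$ norm.
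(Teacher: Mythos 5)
Your proof is correct and follows essentially the same route as the paper: the paper's entire proof consists of writing down the explicit inverse $P^{-1}f=f-\sum_{i,j}a^{ij}(f,v_i)_{L^2}w_j$, which is exactly the linear system you solve for surjectivity and for the constant $C_3$, with the rest (the $L^2$-Pythagoras step and the finite-dimensional comparison constants $C_1,C_2$) being the details the paper declares obvious. Your constant tracking, including $C(m)=m$, is consistent with the statement, so nothing further is needed.
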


  \begin{proof}
  The proof is quite obvious. The only thing to notice is that $$P^{-1}f=f-\sum_{i,j=1}^{m}a^{ij}(f,v_i)_{L^2}w_j.$$
  \end{proof}

  The following corollary of Theorem \ref{KovalevSinger} and Lemma \ref{Projection} provides the main estimate of this section:

  \begin{corollary}
  There exists $\rho_{*}>0$ such that for all $\rho\ge\rho_{*}$, the space of harmonic self-dual 2-forms on $M_{\rho,\Theta}$ equals to $\mathcal{H}^+=\mathrm{span}\{\omega^i_{\rho,\Theta}\}$. The Laplacian operator $\Delta_{\rho,\theta}=\mathrm{d}^*\mathrm{d}+\mathrm{d}\mathrm{d}^*$ from the $L^2$-orthogonal complement of $\mathcal{H}^+$ in $W^{k+2,2}(\Lambda^+)$ to the $L^2$-orthogonal complement of $\mathcal{H}^+$ in $W^{k,2}(\Lambda^+)$ is an isomorphism and the operator norm of $G_{\rho,\theta}=\Delta_{\rho,\theta}^{-1}$ is bounded by $Ce^{2\delta\rho}$ for some constant $C$ independent of $\rho$ and $\Theta$.
  \label{Main-estimate}
  \end{corollary}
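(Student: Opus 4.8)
The plan is to transfer everything to the conjugated operator $P_{\rho,\Theta}$ and then combine Theorem \ref{KovalevSinger} with Lemma \ref{Projection}. Write $\mathcal{H}^+=\mathrm{span}\{\omega^i_{\rho,\Theta}\}$, $K=\mathrm{span}\{e^{-\delta t}\omega^i_{\rho,\Theta}\}$, $K^*=\mathrm{span}\{e^{\delta t}\omega^i_{\rho,\Theta}\}$, and let $V,W$ be the two finite-dimensional spaces from Theorem \ref{KovalevSinger}. Since $P_{\rho,\Theta}$ is the Laplacian $\mathrm{d}^*\mathrm{d}+\mathrm{d}\mathrm{d}^*$ on the compact manifold $M_{\rho,\Theta}$ conjugated by multiplication by the smooth nonvanishing function $e^{\delta t}$, it is an elliptic operator on $\Lambda^+$, Fredholm of index $0$ from $W^{k+2,2}(\Lambda^+)$ to $W^{k,2}(\Lambda^+)$; by Theorem \ref{Ker-and-Coker-of-P} its kernel contains $K$ and its cokernel contains $K^*$.

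First I would pin down the kernel. Given $\phi\in\ker P_{\rho,\Theta}$, write $\phi=v+\phi'$ with $v\in V$ and $\phi'$ in the $L^2$-orthogonal complement of $V$; applying $\mathrm{Proj}_{W^\perp}$ to $P_{\rho,\Theta}\phi=0$ yields $P''_{\rho,\Theta}\phi'=-\mathrm{Proj}_{W^\perp}(P_{\rho,\Theta}v)$, so $\phi'$ is determined by $v$ because $P''_{\rho,\Theta}$ is injective by Theorem \ref{KovalevSinger}; hence $\dim\ker P_{\rho,\Theta}\le\dim V=3$. Therefore $\ker P_{\rho,\Theta}=K$ and, by index $0$, $\mathrm{coker}\,P_{\rho,\Theta}=K^*$, so the range of $P_{\rho,\Theta}$ is the $L^2$-complement $(K^*)^\perp$. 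Any harmonic self-dual $2$-form $\eta$ on $M_{\rho,\Theta}$ satisfies $(\mathrm{d}^*\mathrm{d}+\mathrm{d}\mathrm{d}^*)\eta=0$, hence $e^{-\delta t}\eta\in\ker P_{\rho,\Theta}=K$, i.e. $\eta\in\mathcal{H}^+$; this is the first assertion, and the isomorphism statement for $\Delta_{\rho,\Theta}$ on $(\mathcal{H}^+)^\perp$ is then the standard Hodge decomposition on the compact manifold $M_{\rho,\Theta}$.

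For the quantitative bound I would first construct a right inverse of $P_{\rho,\Theta}$ with $\rho$-independent norm. Given $g\in(K^*)^\perp$, Theorem \ref{KovalevSinger} gives $\phi\in V^\perp$ with $\mathrm{Proj}_{W^\perp}(P_{\rho,\Theta}\phi)=\mathrm{Proj}_{W^\perp}(g)$ and $\|\phi\|_{W^{k+2,2}}\le C\|g\|_{W^{k,2}}$, the step from $\|g\|$ to $\|\mathrm{Proj}_{W^\perp}g\|$ costing only a uniform constant by Lemma \ref{Projection}(1). Then $P_{\rho,\Theta}\phi-g\in W$, but it also lies in the range $(K^*)^\perp$; since the pairing matrix between bases of $W$ and of $K^*$ is invertible (this is the hypothesis of Lemma \ref{Projection}(2), which I would verify by computing those pairings on the ALH end), we get $W\cap(K^*)^\perp=0$, so $P_{\rho,\Theta}\phi=g$ exactly. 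Replacing $\phi$ by $\phi_0=\mathrm{Proj}_{K^\perp}\phi$ (which costs only a uniform constant by Lemma \ref{Projection}(1) and does not change $P_{\rho,\Theta}\phi$ since $K=\ker P_{\rho,\Theta}$) produces $\phi_0\in K^\perp$ with $P_{\rho,\Theta}\phi_0=g$ and $\|\phi_0\|_{W^{k+2,2}}\le C\|g\|_{W^{k,2}}$. Finally, for $f\in(\mathcal{H}^+)^\perp$ put $g=e^{-\delta t}f$; then $(g,e^{\delta t}\omega^i_{\rho,\Theta})_{L^2}=(f,\omega^i_{\rho,\Theta})_{L^2}=0$, so $g\in(K^*)^\perp$, and $\psi:=e^{\delta t}\phi_0$ satisfies $(\mathrm{d}^*\mathrm{d}+\mathrm{d}\mathrm{d}^*)\psi=e^{\delta t}P_{\rho,\Theta}\phi_0=f$; the $L^2$-orthogonal projection of $\psi$ onto $(\mathcal{H}^+)^\perp$ is $G_{\rho,\Theta}f$ (a harmonic correction that again costs only a uniform constant). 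Since multiplication by $e^{\pm\delta t}$ has operator norm $\le Ce^{\delta\rho}$ on the relevant Sobolev spaces — the function and all its derivatives are $O(e^{\delta\rho})$ on $M_{\rho,\Theta}$ because $\delta$ is small and $|t|\le\rho$ — we get $\|G_{\rho,\Theta}f\|_{W^{k+2,2}}\le Ce^{\delta\rho}\|\phi_0\|_{W^{k+2,2}}\le Ce^{\delta\rho}\|g\|_{W^{k,2}}\le Ce^{2\delta\rho}\|f\|_{W^{k,2}}$.

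The step I expect to be most delicate is controlling the $\rho$-dependence of the finite-dimensional constants. Bases of $K$ and $K^*$ have $L^2$-norms of order $e^{\delta\rho}$, bases of $V$ and $W$ have $L^2$-norms of order $1$, and the relevant Gram and pairing matrices have entries of order $e^{\delta\rho}$ (hence inverses of order $e^{-\delta\rho}$). One must check that the constants $C_1,C_2,C_3$ of Lemma \ref{Projection} combine so that every transfer between complements costs only a $\rho$-independent factor — which works out because $\|A^{-1}\|\cdot\max\|v_i\|_{L^2}\cdot\max\|w_j\|_{W^{k,2}}$ is of order $e^{-\delta\rho}\cdot 1\cdot e^{\delta\rho}\asymp 1$ — so that the only genuine exponential growth is the two explicit factors $\|e^{\pm\delta t}\|\le Ce^{\delta\rho}$ that give the final $e^{2\delta\rho}$.
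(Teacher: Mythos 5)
Your proposal is correct and takes essentially the same route as the paper: both identify $\ker P_{\rho,\Theta}=\mathrm{span}\{e^{-\delta t}\omega^i_{\rho,\Theta}\}$ and $\mathrm{coker}\,P_{\rho,\Theta}=\mathrm{span}\{e^{\delta t}\omega^i_{\rho,\Theta}\}$ by combining Theorem \ref{KovalevSinger} with Lemma \ref{Projection} and Theorem \ref{Ker-and-Coker-of-P}, deduce a $\rho$-independent bound for the inverse of $P_{\rho,\Theta}$ on the orthogonal complements, and then conjugate back by $e^{\pm\delta t}$ to collect the factor $e^{2\delta\rho}$. The only minor deviation is that you pin down the cokernel via an index-zero argument (conjugation to the self-adjoint Laplacian on the compact $M_{\rho,\Theta}$) where the paper reads it off from surjectivity in its five-map decomposition, and you make explicit the conjugation step and the uniformity of the finite-dimensional constants that the paper calls straightforward.
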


  \begin{proof}
  The isomorphism map in Theorem \ref{KovalevSinger} can be decomposed into the composition of following maps:
  $$(\chi(t_1-\rho/2)e^{-\delta t_1}\omega^i_{\rho,\Theta})^{\perp}\rightarrow(e^{-\delta t}\omega^i_{\rho,\Theta})^{\perp} \rightarrow(\ker P_{\rho,\Theta})^{\perp}$$
  $$\rightarrow(\mathrm{coker} P_{\rho,\Theta})^{\perp}\rightarrow(e^{\delta t}\omega^i_{\rho,\Theta})^{\perp}\rightarrow(\chi(-t_2-\rho/2)e^{\delta t_2}\omega^i_{\rho,\Theta})^{\perp}.$$
  The first and the fifth maps are isomorphisms by Lemma \ref{Projection}. Therefore, the second map must be injective and the fourth map must be surjective. In other words, $\ker P_{\rho,\Theta}=\mathrm{span}\{e^{-\delta t}\omega^i_{\rho,\Theta}\}$ and $\mathrm{coker} P_{\rho,\Theta}=\mathrm{span}\{e^{\delta t}\omega^i_{\rho,\Theta}\}$. So all the maps are actually isomorphisms. By Theorem \ref{KovalevSinger} and Lemma \ref{Projection}, the operator norm of the inverse of the map $P_{\rho,\Theta}:(\ker P_{\rho,\Theta})^{\perp}\rightarrow(\mathrm{coker} P_{\rho,\Theta})^{\perp}$ is bounded. It's straight forward to switch this estimate into the estimate of the Laplacian operator.
  \end{proof}

  We are ready for the main theorem of this section:

  \begin{theorem}
  Fix $k\ge 3$. For large enough $\rho_*$ and any $\rho\ge\rho_*$, there exists a hyperk\"ahler structure $\tilde\omega^i_{\rho,\Theta}$ on $M_{\rho,\Theta}$ such that
  $||\tilde\omega^i_{\rho,\Theta}-\omega^i_{\rho,\Theta}||_{W^{k,2}}\le Ce^{(-\lambda_1+2\delta)\rho}$ for some constant $C$ independent of $\rho$ and $\Theta$.
  \label{Gluing}
  \end{theorem}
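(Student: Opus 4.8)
The plan is to obtain $\tilde\omega^i_{\rho,\Theta}$ by a contraction-mapping/implicit-function argument whose single essential input is the quantitative invertibility in Corollary \ref{Main-estimate}. By the algebraic characterisation of hyperk\"ahler triples recalled in Section 2, it suffices to produce, near $\omega^i_{\rho,\Theta}$, a triple of \emph{closed} $2$-forms obeying the pointwise relations $\tilde\omega^i\wedge\tilde\omega^j=2\delta_{ij}\tilde V$ for some nowhere-vanishing $4$-form $\tilde V$ (the associated metric is then automatically hyperk\"ahler and is $C^1$-close to $g_{\rho,\Theta}$). Following Donaldson's formalism for positive triples and the analytic set-up of Kovalev--Singer, I would look for $\tilde\omega^i_{\rho,\Theta}=\omega^i_{\rho,\Theta}+\mathrm{d}a^i+\sum_j c^i_j\,\omega^j_{\rho,\Theta}$ with $a^i$ being $\mathrm{d}^*$-closed $1$-forms and $c^i_j$ small constants; the perturbed triple is automatically closed and cohomologous (up to the $c^i_j$ terms) to $\omega^i_{\rho,\Theta}$, and the hyperk\"ahler relations become the vanishing of the traceless symmetric part $\mathcal F(a,c)^{ij}$ of the matrix of $4$-forms $\tilde\omega^i\wedge\tilde\omega^j$, with $\tilde V=\tfrac16\sum_k(\tilde\omega^k)^2$ automatically making the trace part vanish.

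Two facts are then needed. First, the initial error $\mathcal F(0,0)$ is supported in the gluing collar $\{|t|\le 1\}$ and satisfies $\|\mathcal F(0,0)\|_{W^{k,2}}=O(e^{-\lambda_1\rho})$: outside the collar $\omega^i_{\rho,\Theta}$ is one of the genuine hyperk\"ahler triples of $M_1$ or $M_2$, for which the relations hold exactly, while on the collar $|\nabla^m(\omega^i_{\rho,\Theta}-\omega^i_{M_j})|=O(e^{-\lambda_1\rho})$ by construction and Definition \ref{ALH-definition}. Second, the linearization $D\mathcal F(0)$ is, up to coefficients of size $O(e^{-\lambda_1\rho})$ arising from $\omega^i_{\rho,\Theta}\wedge\omega^j_{\rho,\Theta}-2\delta_{ij}V_{\rho,\Theta}$, given by the Laplacian on self-dual $2$-forms acting on the self-dual parts $(\mathrm{d}a^i)^+$, together with the nondegenerate map $\{c^i_j\}\mapsto\{c^i_j+c^j_i\}_{\text{traceless}}$ on the finite-dimensional block; this uses that $\omega^i_{\rho,\Theta}$ is self-dual, so $\omega^i_{\rho,\Theta}\wedge(\mathrm{d}a^j)^-=0$, and that $(\mathrm{d}a^j)^+$ expanded in the frame $\{\omega^k_{\rho,\Theta}\}$ pairs with $\omega^i_{\rho,\Theta}$ essentially diagonally. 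By Theorem \ref{Ker-and-Coker-of-P} and Corollary \ref{Main-estimate} this model operator has a right inverse $G$ bounded by $Ce^{2\delta\rho}$ on the $L^2$-complement of $\mathcal H^+=\mathrm{span}\{\omega^i_{\rho,\Theta}\}$, while the $c^i_j$ block absorbs the remaining finite-dimensional cokernel with a $\rho$-independent bound (here $\int_{M_{\rho,\Theta}}\mathcal F(0,0)$ is controlled by the topological quantities $[\omega^i_{\rho,\Theta}]\cdot[\omega^j_{\rho,\Theta}]-2\delta_{ij}\mathrm{Vol}=O(e^{-\lambda_1\rho})$, and the pairing matrix from varying $c^i_j$ is $\approx 2\,\mathrm{Vol}\cdot\mathrm{Id}$).

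With these in hand the argument is routine. Write $\mathcal F(a,c)=\mathcal F(0,0)+D\mathcal F(0)(a,c)+\mathcal N(a,c)$, so $\mathcal F=0$ becomes the fixed-point problem $u=-G\big(\mathcal F(0,0)+\mathcal N(u)\big)=:T(u)$, $u=(a,c)$. Since $k\ge 3$, $W^{k,2}$ on the bounded-geometry manifold $(M_{\rho,\Theta},g_{\rho,\Theta})$ is a Banach algebra with $\rho$-independent constant and $\mathcal N$ collects only quadratic and higher terms, so $\|\mathcal N(u_1)-\mathcal N(u_2)\|_{W^{k,2}}\le C\max(\|u_1\|,\|u_2\|)\,\|u_1-u_2\|_{W^{k,2}}$ on a small ball. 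Combining $\|G\|\le Ce^{2\delta\rho}$, $\|\mathcal F(0,0)\|=O(e^{-\lambda_1\rho})$, and this quadratic estimate, $T$ maps the $W^{k,2}$-ball of radius $C'e^{(-\lambda_1+2\delta)\rho}$ into itself and is a contraction there once $\rho$ is large, precisely because $-\lambda_1+4\delta<0$ (recall $\delta<\lambda_1/100$). The fixed point yields the hyperk\"ahler triple $\tilde\omega^i_{\rho,\Theta}$ with $\|\tilde\omega^i_{\rho,\Theta}-\omega^i_{\rho,\Theta}\|_{W^{k,2}}\le Ce^{(-\lambda_1+2\delta)\rho}$, $C$ independent of $\rho$ and $\Theta$.

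The point requiring most care is the finite-dimensional bookkeeping: the linearized hyperk\"ahler operator genuinely fails to be surjective, and one must check that enlarging the ansatz by the $\mathcal H^+$-directions $\sum_j c^i_j\omega^j_{\rho,\Theta}$ exactly kills the cokernel, with the relevant pairing matrix uniformly nondegenerate in $(\rho,\Theta)$; this is harmless here since it only perturbs the K\"ahler classes by $O(e^{(-\lambda_1+2\delta)\rho})$. A secondary, but essential, point is that all analytic constants---the Banach-algebra and Sobolev constants, the size of $D\mathcal F(0)-(\text{Laplacian})$, and the operator norm of $G$---are uniform in $\rho$ and $\Theta$; this uniformity is exactly what the Kovalev--Singer machinery underlying Theorem \ref{KovalevSinger} and Corollary \ref{Main-estimate} is built to supply, so I would invoke it rather than reprove it.
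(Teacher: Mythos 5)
Your proposal is correct and follows essentially the same route as the paper: both rest on the Donaldson-style closed-triple formulation, the collar-supported error of size $O(e^{-\lambda_1\rho})$, the quantitative inverse $\|G_{\rho,\Theta}\|\le Ce^{2\delta\rho}$ from Corollary \ref{Main-estimate} (via Kovalev--Singer), and a contraction argument closing because $-\lambda_1+4\delta<0$, with the $\mathcal{H}^+$-directions absorbed by letting the classes move inside $\mathrm{span}\{[\omega^j_{\rho,\Theta}]\}$. The only differences are packaging: the paper fixes the volume form $V=\tfrac12\det(\omega^i_{\rho,\Theta}\wedge\omega^j_{\rho,\Theta})^{1/3}$ and iterates directly on the primitives $\phi^i$ using the identity $\psi=\mathrm{d}^+(-2*\mathrm{d}G_{\rho,\Theta}\psi)$ together with the explicit map $F^i(B)$ and $\mathrm{Proj}_{\mathcal{H}^+}$, rather than your traceless formulation with $\tilde V$ determined by the triple and explicit constants $c^i_j$.
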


  \begin{proof}
  Fix the volume form $V=\frac{1}{2}\det(\omega^i_{\rho,\Theta}\wedge\omega^j_{\rho,\Theta})^{1/3}$ on $M_{\rho,\Theta}$. When two symmetric matrices $A=\frac{\omega^i_{\rho,\Theta}\wedge\omega^j_{\rho,\Theta}}{2V}$ and $B$ are close enough to the identity matrix, the equation $CAC^{\mathrm{T}}=B$ has a solution $C=B^{1/2}A^{-1/2}$. Define $F^i(B)$ by $F^i(B)=C_{ij}\omega^j_{\rho,\Theta}$. Then $F^i(B)\wedge F^j(B)=2b_{ij}V$.

  Recall that the map $G_{\rho,\Theta}$ on $(\mathcal{H}^+)^{\perp}\subset\Lambda^+$ satisfies
  $$\psi=(\mathrm{d}^*\mathrm{d}+\mathrm{d}\mathrm{d}^*)G_{\rho,\Theta}\psi
  =-(*\mathrm{d}*\mathrm{d}+\mathrm{d}*\mathrm{d}*)G_{\rho,\Theta}\psi$$
  $$=-(*+\mathrm{Id})\mathrm{d}*\mathrm{d}G_{\rho,\Theta}\psi=\mathrm{d}^+(-2*\mathrm{d}G_{\rho,\Theta}\psi).$$
  So if $\phi^i\in\Lambda^1$ satisfy the equation
  $$\phi^i=-2*\mathrm{d}G_{\rho,\Theta}\mathrm{Proj}_{(\mathcal{H}^+)^{\perp}} F^i(\delta_{\alpha\beta}-\frac{\mathrm{d}^-\phi^\alpha\wedge\mathrm{d}^-\phi^\beta}{2V}),$$
  then the closed forms $\tilde\omega^i_{\rho,\Theta}=\mathrm{d}\phi^i+\mathrm{Proj}_{\mathcal{H}^+} F^i(\delta_{\alpha\beta}-\frac{\mathrm{d}^-\phi^\alpha\wedge\mathrm{d}^-\phi^\beta}{2V})$
  will satisfy the required equation
  $\tilde\omega^i_{\rho,\Theta}\wedge\tilde\omega^j_{\rho,\Theta}=2\delta_{ij}V$.

  We will solve the equation by iterations
  $$\phi_0^i=0,$$
  $$\phi_{n+1}^i=-2*\mathrm{d}G_{\rho,\Theta}\mathrm{Proj}_{(\mathcal{H}^+)^{\perp}} F^i(\delta_{\alpha\beta}-\frac{\mathrm{d}^-\phi_{n}^\alpha\wedge\mathrm{d}^-\phi_{n}^\beta}{2V}).$$
  Since $W^{k,2}$ embeds into $C^{0}$, if $||\phi_{n}^i||_{W^{k+1,2}}\le e^{-\lambda_1\rho/2}$ and $\rho\ge\rho_*$, then
  $$||\frac{\omega^i_{\rho,\Theta}\wedge\omega^j_{\rho,\Theta}}{2V}-\delta_{ij}||_{C^0}
  +||\frac{\mathrm{d}^-\phi_{n}^i\wedge\mathrm{d}^-\phi_{n}^j}{2V}||_{C^0}\le Ce^{-\lambda_1\rho}$$
  can be arbitrarily small if $\rho_*$ is large.
  So $$||\phi_{n+1}^i||_{W^{k+1,2}}\le Ce^{2\delta\rho}||\mathrm{Proj}_{(\mathcal{H}^+)^{\perp}}F^i (\delta_{\alpha\beta}-\frac{\mathrm{d}^-\phi_{n}^\alpha\wedge\mathrm{d}^-\phi_{n}^\beta}{2V})||_{W^{k,2}}\le Ce^{(-\lambda_1+2\delta)\rho}.$$
  As long as $\rho_*$ is large enough, the above estimate holds by induction.
  It follows that $$||\phi_{n+2}^i-\phi_{n+1}^i||_{W^{k+1,2}}\le Ce^{(-\lambda_1+4\delta)\rho}||\phi_{n+1}^i-\phi_{n}^i||_{W^{k+1,2}}.$$
  As long as $\rho_*$ is large enough, $\phi^i=\lim_{n\rightarrow\infty}\phi_{n}^i$ will be the solution.
  \end{proof}

  \begin{corollary}
  For any ALH gravitational instanton $M$, $\int_{M}|Rm|^2=96\pi^2$.
  \label{Curvature-L2-bound}
  \end{corollary}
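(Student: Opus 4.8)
The plan is to realize $\int_M|Rm|^2$ as one half of the corresponding integral over the compact K3 surface obtained in this section by gluing $M$ to a second copy of itself, and then to evaluate the latter by Chern--Gauss--Bonnet.

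First I would recall that in real dimension four a Ricci-flat metric on a compact manifold $X$ has $\mathrm{Rm}=W$ and $R=0$, so the Chern--Gauss--Bonnet formula collapses to $\chi(X)=\frac{1}{8\pi^2}\int_X|Rm|^2$, where $\chi$ denotes the Euler characteristic. I would apply this to $X=M_{\rho,\Theta}$ equipped with the hyperk\"ahler, hence Ricci-flat, metric $\tilde g_{\rho,\Theta}$ determined by the triple $\tilde\omega^i_{\rho,\Theta}$ from Theorem \ref{Gluing}, for any $\rho\ge\rho_*$. This gives $\int_{M_{\rho,\Theta}}|Rm_{\tilde g_{\rho,\Theta}}|^2=8\pi^2\,\chi(M_{\rho,\Theta})$, a number independent of $\rho$.

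Next I would compute $\chi(M_{\rho,\Theta})$ purely topologically. Decomposing $M_{\rho,\Theta}$ into the two truncated copies of $M$ glued along a collar homotopy equivalent to $\mathbb{T}^3$ and using additivity of the Euler characteristic (Mayer--Vietoris), one gets $\chi(M_{\rho,\Theta})=2\chi(M)-\chi(\mathbb{T}^3)=2\chi(M)$. Since $M=\bar M\setminus D$ with $\bar M$ a rational elliptic surface, so $\chi(\bar M)=12$, and $D$ a smooth genus-one fiber, so $\chi(D)=0$, we have $\chi(M)=12$ and hence $\chi(M_{\rho,\Theta})=24$ --- as it must be, since $M_{\rho,\Theta}$ is diffeomorphic to a K3 surface. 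Therefore $\int_{M_{\rho,\Theta}}|Rm_{\tilde g_{\rho,\Theta}}|^2=192\pi^2$ for every $\rho\ge\rho_*$.

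The remaining, and most delicate, step is to let $\rho\to\infty$ and show $\int_{M_{\rho,\Theta}}|Rm_{\tilde g_{\rho,\Theta}}|^2\to 2\int_M|Rm|^2$. I would argue in two comparisons. (i) Choosing $k$ large in Theorem \ref{Gluing} and using the uniform (in $\rho,\Theta$) bounded geometry of $g_{\rho,\Theta}$ together with Sobolev embedding, the bound $\|\tilde\omega^i_{\rho,\Theta}-\omega^i_{\rho,\Theta}\|_{W^{k,2}}\le Ce^{(-\lambda_1+2\delta)\rho}$ upgrades to $\|Rm_{\tilde g_{\rho,\Theta}}-Rm_{g_{\rho,\Theta}}\|_{C^0}\le Ce^{(-\lambda_1+2\delta)\rho}$, since both the metric and its curvature depend smoothly (indeed algebraically) on the triple of $2$-forms and its first two derivatives; as $\mathrm{Vol}(M_{\rho,\Theta})$ grows only linearly in $\rho$ and both curvatures are uniformly bounded, the two curvature integrals differ by $O(\rho\, e^{(-\lambda_1+2\delta)\rho})\to 0$ (recall $\delta<\lambda_1/100$). (ii) Then $\int_{M_{\rho,\Theta}}|Rm_{g_{\rho,\Theta}}|^2$ splits over the two bulk regions $\{\pm t\ge \tfrac12\}$, on each of which $g_{\rho,\Theta}$ agrees exactly with the ALH metric on the corresponding copy of $M$, contributing $\int_{\{r\le\rho-1/2\}}|Rm|^2\to\int_M|Rm|^2$ (the limit being finite because $|Rm|=O(e^{-\tau r})$ on the ALH end), plus the bounded-size neck $\{|t|\le\tfrac12\}$, where $r\approx\rho$ forces $|Rm_{g_{\rho,\Theta}}|=O(e^{-\lambda_1\rho})$ and hence a contribution $O(e^{-2\lambda_1\rho})\to 0$. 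Combining the two comparisons with the fixed value $192\pi^2$ yields $192\pi^2=2\int_M|Rm|^2$, i.e. $\int_M|Rm|^2=96\pi^2$. One could equally run the argument directly with $g_{\rho,\Theta}$ in place of $\tilde g_{\rho,\Theta}$, using the full Chern--Gauss--Bonnet integrand and noting that its Ricci and scalar terms are exponentially small on the neck, but working with the genuinely Ricci-flat $\tilde g_{\rho,\Theta}$ is cleaner.
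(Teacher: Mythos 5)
Your argument is essentially the paper's: the paper also deduces the result from the fact that the glued K3 surface $M_{\rho,\Theta}$ with the hyperk\"ahler metric of Theorem \ref{Gluing} satisfies $\int_{M_{\rho,\Theta}}|Rm|^2=8\pi^2\chi(M_{\rho,\Theta})=192\pi^2$, the ALH integral being half of this in the limit $\rho\to\infty$. You have simply written out the limiting comparison that the paper leaves as ``easy to deduce,'' and your details (exponential closeness of the glued and ALH metrics on the bulk, negligible neck contribution) are correct.
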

  \begin{proof}
  It's easy to deduce this conclusion from the well known fact that for K3 surface $M_{\rho,\Theta}$, $\int_{M_{\rho,\Theta}}|Rm(\tilde\omega^i_{\rho,\Theta})|^2=8\pi^2\chi(M_{\rho,\Theta})=192\pi^2.$
  \end{proof}

  \begin{theorem}
  Suppose $\alpha^i$ are three 2-forms satisfying the following conditions:

  (1)$$\int_{M_{\rho,\Theta}}\alpha^2\wedge\alpha^3=\int_{M_{\rho,\Theta}}\alpha^3\wedge\alpha^1=\int_{M_{\rho,\Theta}}\alpha^1\wedge\alpha^2=0,$$
  $$\int_{M_{\rho,\Theta}}\alpha^1\wedge\alpha^1=\int_{M_{\rho,\Theta}}\alpha^2\wedge\alpha^2=\int_{M_{\rho,\Theta}}\alpha^3\wedge\alpha^3;$$

  (2)$$||\alpha^i-\tilde\omega^i_{\rho,\Theta}||_{L^2}\le e^{-3\lambda_1\rho/4}.$$
  Then there exists a hyperk\"ahler structure $\omega^i$ on $M_{\rho,\Theta}$ such that $\omega^i\in[\alpha^i]$ and $||\omega^i-\tilde\omega^i_{\rho,\Theta}||_{W^{k,2}}\le Ce^{2\delta\rho}||\alpha^i-\tilde\omega^i_{\rho,\Theta}||_{L^2}.$
  \label{Quantitative-local-Torelli}
  \end{theorem}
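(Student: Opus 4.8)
The approach parallels the proof of Theorem \ref{Gluing}, with the extra feature that the cohomology classes must be pinned to $[\alpha^i]$; the period relations in (1) are exactly what makes this possible (I take the $\alpha^i$ to be closed, as the statement $\omega^i\in[\alpha^i]$ presupposes). Let $\tilde g=\tilde g_{\rho,\Theta}$ be the hyperk\"ahler metric whose K\"ahler forms are $\tilde\omega^i_{\rho,\Theta}$, with volume form $\tilde V$, so $\tilde\omega^i_{\rho,\Theta}\wedge\tilde\omega^j_{\rho,\Theta}=2\delta_{ij}\tilde V$ and the $\tilde\omega^i_{\rho,\Theta}$ are $\tilde g$-harmonic and span the self-dual harmonic forms $\tilde{\mathcal H}^+$ (since $b^+=3$). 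By Theorem \ref{Gluing} one has $\|\tilde g-g_{\rho,\Theta}\|\le Ce^{(-\lambda_1+2\delta)\rho}\ll e^{-2\delta\rho}$, so Corollary \ref{Main-estimate} persists under this perturbation: the Green operator $\tilde G$ of $\mathrm{d}^*\mathrm{d}+\mathrm{d}\mathrm{d}^*$ on the $L^2$-orthogonal complement of $\tilde{\mathcal H}^+$ in $\Lambda^+$ exists and $\|\tilde G\|\le Ce^{2\delta\rho}$. First I replace $\alpha^i$ by its $\tilde g$-harmonic representative $\hat\alpha^i$ in the class $[\alpha^i]$; then $\hat\alpha^i=\tilde\omega^i_{\rho,\Theta}+h^i$ with $h^i$ the $\tilde g$-harmonic part of $\alpha^i-\tilde\omega^i_{\rho,\Theta}$, so $\|h^i\|_{L^2}\le\|\alpha^i-\tilde\omega^i_{\rho,\Theta}\|_{L^2}$ and, by interior elliptic estimates for harmonic forms summed over a covering of $M_{\rho,\Theta}$ by unit balls (constants uniform in $\rho$ by bounded geometry), $\|h^i\|_{W^{k,2}}\le C\|\alpha^i-\tilde\omega^i_{\rho,\Theta}\|_{L^2}\le Ce^{-3\lambda_1\rho/4}$, in particular $C^0$-small. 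Since $[\hat\alpha^i]=[\alpha^i]$, condition (1) gives $\int_{M_{\rho,\Theta}}\hat\alpha^i\wedge\hat\alpha^j=2\delta_{ij}W$ with $W:=\tfrac12\int_{M_{\rho,\Theta}}\alpha^1\wedge\alpha^1>0$; I set the target volume form $\hat V:=\mu\tilde V$ with $\mu:=W/\int_{M_{\rho,\Theta}}\tilde V=1+O(e^{-3\lambda_1\rho/4})$, so that $\int_{M_{\rho,\Theta}}\hat V=W$.

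Now I look for $\omega^i=\hat\alpha^i+\mathrm{d}\phi^i$ with $1$-forms $\phi^i$ solving $\omega^i\wedge\omega^j=2\delta_{ij}\hat V$; then automatically $[\omega^i]=[\alpha^i]$. With $E_{ij}:=2\delta_{ij}\hat V-\hat\alpha^i\wedge\hat\alpha^j$, a symmetric matrix of $4$-forms satisfying $\int_{M_{\rho,\Theta}}E_{ij}=0$ (this is where the period relations together with the calibrated choice of $\mu$ enter) and $\|E_{ij}/\tilde V\|_{W^{k,2}}\le C\max_i\|\alpha^i-\tilde\omega^i_{\rho,\Theta}\|_{L^2}$, the equation reads
$$\tilde\omega^i_{\rho,\Theta}\wedge\mathrm{d}\phi^j+\mathrm{d}\phi^i\wedge\tilde\omega^j_{\rho,\Theta}=E_{ij}-h^i\wedge\mathrm{d}\phi^j-\mathrm{d}\phi^i\wedge h^j-\mathrm{d}\phi^i\wedge\mathrm{d}\phi^j.$$
The linear model operator on the left can be inverted as follows: since $\tilde\omega^i_{\rho,\Theta}\wedge\mathrm{d}\phi=\tilde\omega^i_{\rho,\Theta}\wedge\mathrm{d}^+\phi$ and $\Lambda^+$ is fibrewise spanned by the $\tilde\omega^k_{\rho,\Theta}$, writing $\mathrm{d}^+\phi^j=\sum_k a^j_k\tilde\omega^k_{\rho,\Theta}$ gives $\tilde\omega^i_{\rho,\Theta}\wedge\mathrm{d}\phi^j=2a^j_i\tilde V$; so given a mean-zero symmetric right-hand side $g_{ij}\tilde V$ one takes $a^j_i=\tfrac14 g_{ij}$, and the self-dual form $\psi^j:=\tfrac14\sum_i g_{ij}\tilde\omega^i_{\rho,\Theta}$ has vanishing $\tilde{\mathcal H}^+$-component precisely because $\int_{M_{\rho,\Theta}}g_{ij}\tilde V=0$, whence $\psi^j\in(\tilde{\mathcal H}^+)^\perp$ and $\phi^j:=-2*_{\tilde g}\mathrm{d}\tilde G\psi^j$ satisfies $\mathrm{d}^+\phi^j=\psi^j$ with $\|\phi^j\|_{W^{k+1,2}}\le Ce^{2\delta\rho}\|g\|_{W^{k,2}}$. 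This is the only step that loses a factor $e^{2\delta\rho}$, and by construction no harmonic component is discarded, which is what keeps $\omega^i$ in the prescribed class.

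Finally I iterate: $\phi^i_0=0$, and $\phi^i_{n+1}$ is obtained by applying the above solution operator to $E_{ij}-h^i\wedge\mathrm{d}\phi^j_n-\mathrm{d}\phi^i_n\wedge h^j-\mathrm{d}\phi^i_n\wedge\mathrm{d}\phi^j_n$, whose $\tilde V$-coefficient is again mean zero since each correction term is exact ($h^i$ is closed and $\mathrm{d}\phi^i_n\wedge\mathrm{d}\phi^j_n=\mathrm{d}(\phi^i_n\wedge\mathrm{d}\phi^j_n)$). As $W^{k,2}$ is a Banach algebra in dimension $4$ for $k\ge3$, the first iterate obeys $\|\mathrm{d}\phi^i_1\|_{W^{k,2}}\le Ce^{2\delta\rho}\max_i\|\alpha^i-\tilde\omega^i_{\rho,\Theta}\|_{L^2}$, and the successive differences satisfy $\|\mathrm{d}(\phi^i_{n+1}-\phi^i_n)\|_{W^{k,2}}\le Ce^{2\delta\rho}(e^{-3\lambda_1\rho/4}+e^{2\delta\rho}\max_i\|\alpha^i-\tilde\omega^i_{\rho,\Theta}\|_{L^2})\,\|\mathrm{d}(\phi^i_n-\phi^i_{n-1})\|_{W^{k,2}}$; since $\lambda_1>100\delta$ the prefactor tends to $0$ as $\rho\to\infty$, so for $\rho\ge\rho_*$ large it is a contraction and $\phi^i=\lim_n\phi^i_n$ exists with $\|\mathrm{d}\phi^i\|_{W^{k,2}}\le Ce^{2\delta\rho}\max_i\|\alpha^i-\tilde\omega^i_{\rho,\Theta}\|_{L^2}$. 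Then $\omega^i:=\hat\alpha^i+\mathrm{d}\phi^i$ is closed, lies in $[\alpha^i]$, and solves $\omega^i\wedge\omega^j=2\delta_{ij}\hat V$ with $\hat V$ nowhere vanishing, while $\|\omega^i-\tilde\omega^i_{\rho,\Theta}\|_{W^{k,2}}\le\|h^i\|_{W^{k,2}}+\|\mathrm{d}\phi^i\|_{W^{k,2}}\le Ce^{2\delta\rho}\|\alpha^i-\tilde\omega^i_{\rho,\Theta}\|_{L^2}$; being $C^0$-small it is close to $\tilde g$, hence positive definite with $IJ=K$, so $\{\omega^i\}$ is a genuine hyperk\"ahler structure by the standard characterization of hyperk\"ahler structures by triples of symplectic forms (the metric is then smooth by elliptic regularity). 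The one point requiring care is keeping every right-hand side in the scheme exact, so that the Hodge-theoretic solve never drops a harmonic part; this is precisely where hypothesis (1) and the matching $\int\hat V=W$ are used, and without them the linearized problem would be obstructed.
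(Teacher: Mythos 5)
Your proposal is correct in substance, but it takes a genuinely different route from the paper at the decisive step. The paper also starts with harmonic representatives $\beta^i$ of $[\alpha^i]$ and the elliptic bound $\|\beta^i-\tilde\omega^i_{\rho,\Theta}\|_{W^{k,2}}\le C\|\alpha^i-\tilde\omega^i_{\rho,\Theta}\|_{L^2}$, but then it simply reruns the proof of Theorem \ref{Gluing} with $\beta^i$ in place of $\omega^i_{\rho,\Theta}$ (the Donaldson-type ansatz $\mathrm{d}\phi^i+\mathrm{Proj}_{\mathcal{H}^+}F^i(B)$ with the matrix square root $F^i$), which only pins the classes down to $\mathrm{span}\{[\alpha^i]\}$; condition (1) is then used a posteriori, to show that the resulting triple can be corrected by a rescaling and an $\mathrm{SO}(3)$ hyperk\"ahler rotation (close to the identity) so as to land exactly in $[\alpha^i]$. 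You instead make the exact-class ansatz $\omega^i=\hat\alpha^i+\mathrm{d}\phi^i$ with a calibrated volume form $\hat V=\mu\tilde V$, and use condition (1) a priori, to guarantee that every right-hand side in your Picard iteration is mean-zero (hence $L^2$-orthogonal to $\tilde{\mathcal H}^+$), so the linear solve via $\phi^j=-2*\mathrm{d}\tilde G\psi^j$ is unobstructed; this removes the need for the final rotation/rescaling step. Both arguments rest on the same linear estimate (Corollary \ref{Main-estimate}, transplanted to the perturbed hyperk\"ahler metric $\tilde g$ — a Neumann-series point you flag and the paper leaves implicit when it ``replaces $\omega^i_{\rho,\Theta}$ by $\beta^i$''), lose the same factor $e^{2\delta\rho}$, and use $W^{k,2}$ ($k\ge3$) as a Banach algebra for the quadratic terms; your scheme carries the full quadratic term $\mathrm{d}\phi^i\wedge\mathrm{d}\phi^j$ rather than only $\mathrm{d}^-\phi^i\wedge\mathrm{d}^-\phi^j$, which is harmless for a contraction but is why the paper's square-root ansatz is not needed in your version. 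The only cosmetic discrepancy is that your final bound has $\max_j\|\alpha^j-\tilde\omega^j_{\rho,\Theta}\|_{L^2}$ on the right rather than the index-matched quantity in the statement; given hypothesis (2) this is the estimate actually used later, and the paper's own two-line proof does no better on this point.
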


  \begin{proof}
  Using $\tilde\omega^i_{\rho,\Theta}$ as the background hyperk\"ahler structure, we can choose harmonic representatives $\beta^i$ from the cohomology classes $[\alpha^i]$. Therefore, $$C^{-1}||\beta^i-\tilde\omega^i_{\rho,\Theta}||_{W^{k,2}}\le
  ||\beta^i-\tilde\omega^i_{\rho,\Theta}||_{L^2}\le||\alpha^i-\tilde\omega^i_{\rho,\Theta}||_{L^2}\le e^{-3\lambda_1\rho/4}.$$
  After replacing $\omega^i_{\rho,\Theta}$ by $\beta^i$ in the proof of Theorem \ref{Gluing}, we can find a hyperk\"ahler structure $\omega^i_0$ such that
  $\mathrm{span}\{[\omega^i_0]\}=\mathrm{span}\{[\alpha^i]\}$. By the condition (1) of $\alpha^i$, the hyperk\"ahler structure $\omega^i$ can be chosen to be some rescaling and hyperk\"ahler rotation of $\omega^i_0$.
  \end{proof}

  Now we will use the method in this section to construct a counterexample of Torelli Theorem in ALG case:
  \begin{theorem}
  When $D$ is of type II$^*$, III$^*$, or IV$^*$, there exist two different ALG gravitational instantons with same $[\omega^i]$.
  \end{theorem}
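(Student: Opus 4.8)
The plan is to exploit the extra flexibility identified in the third part of Theorem \ref{Main-Theorem-2}: when $D$ is of type II$^*$, III$^*$, or IV$^*$, changing the section used to compactify $M$ changes $\bar M$, and different rational elliptic surfaces $\bar M$ (all obtained from the same ALG end but with different choices of $\sigma$) can carry ALG gravitational instantons whose K\"ahler forms have the same periods. Concretely, start from a fixed rational elliptic surface $(\bar M, z)$ with $D=\{z=\infty\}$ of the given type, a rational 2-form $\omega^+$ with pole $D$, and a K\"ahler form $\omega$; by Theorem \ref{Hein-Tian-Yau} this produces an ALG gravitational instanton $(M,\omega^1,\omega^2,\omega^3)$. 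Now I would modify the compactifying section $\sigma$ by a holomorphic section supported near the end — equivalently, apply a fiberwise translation $T(z,v)=(z,v+f(z))$ that does \emph{not} extend across $D$ on the original $\bar M$ but which, for the new choice $\bar M'=$ blow-up along $\sigma+(\text{shift})$, does extend. The resulting $(M',(\omega')^i)$ is a genuinely different ALG gravitational instanton (different underlying complex surface in the complex structure $I$), and I would arrange the shift so that the periods of $(\omega')^2,(\omega')^3$ are unchanged (they are determined by $\omega^+$, which only gets pulled back by a translation and hence has the same cohomology class on the identified $H_2$), while the $[\,(\omega')^1\,]$ can be matched to $[\omega^1]$ after an additional $i\partial\bar\partial$-correction of polynomial growth as in the uniqueness argument following Theorem \ref{Hein-Tian-Yau}.

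The key steps, in order, are: (i) Identify the homology lattices of $\bar M\setminus D$ and $\bar M'\setminus D$ via the common ALG end, using Theorem \ref{Compactification} and the topological results of Section 3, so that ``same $[\omega^i]$'' is a meaningful statement. (ii) Use the $\beta>1/2$ analysis in the proof of the third part of Theorem \ref{Main-Theorem-2}: the obstruction term $u^{1/\beta-2}\,\mathrm{d}u\wedge\mathrm{d}\bar v$ corresponds precisely to a meromorphic $f$ with a pole at $\{\tilde z=0\}$, and this pole is exactly what forces a change of $\bar M$; a one-parameter family of such $f$'s gives a one-parameter family of non-isomorphic surfaces $\bar M'$ all compatible with the same ALG model. (iii) For a suitable member of this family, run the Hein--Tian--Yau construction (Theorem \ref{Hein-Tian-Yau}) on $\bar M'$ with a rational 2-form matching $\omega^+$ under the end-identification and with a K\"ahler form whose regular-fiber area equals $a$; this yields a second ALG gravitational instanton $(M',(\omega')^i)$. (iv) Compare periods: the classes $[(\omega')^2],[(\omega')^3]$ agree with $[\omega^2],[\omega^3]$ because $\omega^+$ is pulled back by a fiberwise translation; then adjust $\omega$ on $\bar M'$ by $i\partial\bar\partial$ of a polynomial-growth function (the construction permits this and it does not change cohomology) so that $[(\omega')^1]=[\omega^1]$ as well. (v) Finally, argue non-isomorphism: if there were a tri-holomorphic isometry $M\to M'$ it would, by the uniqueness in Theorem \ref{Main-Theorem-2}(2) and the rigidity of the ALG end, extend to a biholomorphism $\bar M\to\bar M'$ commuting with $z$, contradicting the fact that the two rational elliptic surfaces were chosen in different isomorphism classes (they have the same singular-fiber configuration but inequivalent $j$-invariant data / Mordell--Weil position, distinguishable because the translating $f$ is nonconstant).

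The main obstacle I expect is step (v), showing the two instantons are actually \emph{different}. It is not enough that $\bar M\not\cong\bar M'$ as abstract surfaces: one must rule out a tri-holomorphic isometry between the \emph{open} hyperk\"ahler 4-manifolds inducing a prescribed map on $H_2$, which in particular could use a hyperk\"ahler rotation that changes which complex structure plays the role of $I$. To handle this I would pin down the complex structure: after fixing $\omega^+=\omega^2+i\omega^3$ as a \emph{meromorphic} $2$-form with pole exactly along a type II$^*$/III$^*$/IV$^*$ fiber, only $I$ (up to sign) among the sphere of complex structures makes $M$ quasi-projective with this polar behavior — the $\tau$ in the ALG table is a specific root of unity, rigidifying the modulus — so any isometry matching the $\omega^i$ must be $I$-biholomorphic and $z$-equivariant, hence extends to $\bar M\cong\bar M'$, the desired contradiction. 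A secondary technical point is verifying that the polynomial-growth $i\partial\bar\partial$-adjustment in step (iv) can be made without destroying the ALG asymptotics or the normalization $(\omega')^1\wedge(\omega')^1=\tfrac12\omega^+\wedge\bar\omega^+$; this is exactly the content of Theorem \ref{Hein-Tian-Yau} applied on $\bar M'$, so it should go through verbatim.
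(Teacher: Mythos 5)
Your proposal takes a genuinely different route from the paper, and it has a critical gap at exactly the step you flag as the main obstacle, step (v). Your non-isomorphism argument rests on the claim that a tri-holomorphic isometry $M\to M'$ would be $z$-equivariant and would therefore extend to a biholomorphism $\bar M\cong\bar M'$ of the compactifications. But when $D$ is of type II$^*$, III$^*$ or IV$^*$ (i.e.\ $1/2<\beta<1$), the compactification of a given ALG instanton is \emph{not} unique: a fiberwise translation by a section whose translating function $f$ has a pole at $\tilde z=0$ is a biholomorphism of the end that changes the compactification without changing $M$. This is precisely the content of the third part of Theorem \ref{Main-Theorem-2} (``we may need a new choice of $\bar M$''), and it is the same pole phenomenon you invoke in your step (ii). Consequently, $\bar M\not\cong\bar M'$ tells you nothing about whether the open hyperk\"ahler manifolds are different; the extension across $D$ that your argument needs is obstructed by exactly the pole you used to produce $\bar M'$, and the two instantons you construct may very well be tri-holomorphically isometric. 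A secondary confusion: in step (iv) an $i\partial\bar\partial$-correction is exact and cannot change $[(\omega')^1]$, so it cannot be used to ``match'' K\"ahler classes; you would have to match them by the choice of K\"ahler class on $\bar M'$ from the start.

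The paper's proof avoids the complex-geometric comparison entirely and works on a single fixed manifold: take Hein's Example 3.1 instanton with $D$ of type IV$^*$ (similarly II$^*$, III$^*$), whose curvature decays like $r^{-\frac{1}{\beta}-4}$, and perturb $\omega^2$ \emph{inside its cohomology class} by $t\,\mathrm{d}(\mathrm{Re}\,h)$, where $h$ is a harmonic $(0,1)$-form asymptotic to $\frac{1}{\frac{1}{\beta}-1}u^{1/\beta-1}\mathrm{d}\bar v$, so that $\mathrm{d}(\mathrm{Re}\,h)$ is exact, anti-self-dual, and decays only like $|u|^{1/\beta-2}$. An iteration in weighted spaces (using the Green operator from Theorem 4.12 of the first paper) corrects the triple to a genuine hyperk\"ahler structure $\omega^i+\mathrm{d}\phi^i$ with the same $[\omega^i]$, and the new metric's curvature is proportional to $r^{\frac{1}{\beta}-4}$, a strictly slower decay rate than the original; hence the two instantons are not even isometric. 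The decisive ingredient is thus a quantitative isometry invariant (the curvature decay rate) distinguishing two metrics in the same cohomology classes, rather than any appeal to the isomorphism class of a compactification, which in this range of $\beta$ is simply not an invariant of the instanton. If you want to salvage your approach, you would need a different invariant of the open hyperk\"ahler manifold that sees the change of section; as written, the proposal does not establish that the two instantons are different.
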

  \begin{proof}
  In Example 3.1 of \cite{Hein}, Hein explains how the pairs (IV,IV$^*$) occur in rational elliptic surfaces $\bar M$ birational to $(\mathbb{P}^1\times\mathbb{T}^2)/\Gamma$ with $\Gamma=\mathbb{Z}_3$. Let $D$ be the fiber of type IV$^*$. Then, the construction in \cite{Hein} provides an ALG gravitational instanton $\omega^i$ on $M=\bar M\setminus D$. Moreover, the asymptotic rate is $2+\frac{1}{\beta}$. In particular, $|\mathrm{Rm}|=O(r^{-\frac{1}{\beta}-4})$. There is a similar example when $D$ is of type II$^*$ or III$^*$.

  By Theorem 4.12 of \cite{FirstPaper}, there exists a harmonic (0,1) form $h$ on $M$ asymptotic to $\frac{1}{\frac{1}{\beta}-1}u^{1/\beta-1}\mathrm{d}\bar v$. So $\mathrm{d}(\mathrm{Re}h)$ is an exact harmonic form asymptotic to $\mathrm{Re}(u^{1/\beta-2}\mathrm{d}u\wedge\mathrm{d}\bar v)$. Moreover, it's anti-self-dual because the coefficients of its self-dual part are decaying harmonic and thus 0.

  Let's use the notations in \cite{FirstPaper}. For example, $$||\phi||_{H^2_{\delta}}=\sqrt{\int_M|\phi|^2r^\delta\mathrm{dVol}+\int_M|\nabla\phi|^2r^{\delta+2}\mathrm{dVol}
  +\int_M|\nabla^2\phi|^2r^{\delta+4}\mathrm{dVol}}.$$
  Then by Theorem 4.12 of \cite{FirstPaper}, for $k\ge 5$ and small positive $\epsilon$, there exists a map $G:H^k_{6-\frac{4}{\beta}-\epsilon}(\Lambda^+)\rightarrow H^{k+2}_{2-\frac{4}{\beta}-\epsilon}(\Lambda^+)$ such that $\psi=(\mathrm{d}^*\mathrm{d}+\mathrm{d}\mathrm{d}^*)G\psi$. We still define $F^i:\Gamma(\mathbb{R}^{3\times3})\rightarrow\Lambda^+$ as in Theorem \ref{Gluing} so that $F^i(B)\wedge F^j(B)=2b_{ij}V$. Then we do the iteration
  $$\phi_0^1=\phi_0^3=0, \phi_0^2=t\mathrm{Re}h,$$
  $$\phi_{n+1}^i=-2*\mathrm{d}G(F^i(\delta_{\alpha\beta}
  -\frac{\mathrm{d}^-\phi_{n}^\alpha\wedge\mathrm{d}^-\phi_{n}^\beta}{2V})-\omega^i)+\delta_{i2}t\mathrm{Re}h$$
  When $t$ is small enough, $(\phi_n^1,\phi_n^2-t\mathrm{Re}h,\phi_n^3)\rightarrow(\phi^1,\phi^2-t\mathrm{Re}h,\phi^3)\in H^{k+1}_{4-\frac{4}{\beta}-\epsilon}$.
  Then $\omega^i+\mathrm{d}\phi^i$ will be an ALG gravitational instanton. By direct computation, the curvature of the metric corresponding to $(J,\omega^2+t\mathrm{d}(\mathrm{Re}h))$ is proportional to $r^{\frac{1}{\beta}-4}$. It's also true for the metric corresponding to $\omega^i+\mathrm{d}\phi^i$ because their difference is in $H^k_{6-\frac{4}{\beta}-\epsilon}$. In particular, the metric corresponding to $\omega^i+\mathrm{d}\phi^i$ is not isometric to the metric corresponding to $\omega^i$.
  \end{proof}

  \section{Uniqueness of ALH gravitational instantons}

  In this section, we will prove the uniqueness part of Theorem \ref{Main-Theorem-3}.
  We start from the understanding of the cross section:

  \begin{theorem}
  The integrals of $\omega^i$ on the three faces determine the torus $\mathbb{T}^3$.
  \label{Cross-section}
  \end{theorem}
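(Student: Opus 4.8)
The plan is to extract the torus $\mathbb{T}^3 = \mathbb{R}^3/\Lambda$ from the periods of the flat ALH model at infinity, and show these periods are computable from the cohomology classes $[\omega^i]$. Recall that in the standard ALH model, the hyperk\"ahler triple is determined by $\mathrm{d}r = I^*\mathrm{d}\theta^1 = J^*\mathrm{d}\theta^2 = K^*\mathrm{d}\theta^3$, so that $\omega^1 = \mathrm{d}r\wedge\mathrm{d}\theta^1 + \mathrm{d}\theta^2\wedge\mathrm{d}\theta^3$ and cyclically. Thus on the end the form $\omega^i$ restricted to the cross-section $\mathbb{T}^3$ is the constant-coefficient $2$-form $\mathrm{d}\theta^{j}\wedge\mathrm{d}\theta^{k}$ (with $(i,j,k)$ cyclic), where the $\theta$'s are linear coordinates dual to the lattice basis $v_1,v_2,v_3$ of $\Lambda$. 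The point is that the three faces $F_{jk}$ (spanned by $v_j,v_k$) are, up to the exponentially small perturbation coming from Definition \ref{ALH-definition}, exactly represented by the $2$-cycles $\mathbb{T}^2_{jk}\subset\mathbb{T}^3\subset M$, and the integrals of $\omega^i$ over them recover the Gram-type data of the dual lattice $\Lambda^*$.

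Concretely, I would proceed as follows. First, fix once and for all the identification $\Phi:E\to M\setminus K$ of Definition \ref{ALH-definition}, and push forward the classes $[F_{jk}]\in H_2(M,\mathbb{Z})$ to the standard cycles $[\mathbb{T}^2_{jk}]$ in the model. Second, compute $\int_{F_{jk}}\omega^i$: since $\omega^i - \Phi_*\omega^i_{\mathrm{flat}}$ decays exponentially and the cycles can be pushed out to $r\to\infty$, we get $\int_{F_{jk}}\omega^i = \int_{\mathbb{T}^2_{jk}}\omega^i_{\mathrm{flat}} = \varepsilon_{ijk}\,\mathrm{vol}(\mathbb{T}^2_{jk})$ where, writing $\theta^\ell$ as the coordinate dual to $v_\ell$, $\int_{\mathbb{T}^2_{jk}}\mathrm{d}\theta^i$-type terms vanish and only $\mathrm{d}\theta^j\wedge\mathrm{d}\theta^k$ survives. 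This identifies the $3\times 3$ matrix $f_{ijk}=\int_{F_{jk}}\alpha^i$ with (a signed permutation of) the matrix of the \emph{dual} basis pairings — equivalently, with $\det(v_1,v_2,v_3)$ times the inverse-transpose Gram matrix of $\{v_\ell\}$. Third, invert this relation: from the entries $f_{ijk}$ one reconstructs the Gram matrix $\langle v_j,v_k\rangle$ and the volume $\det(v_1,v_2,v_3)$ of $\Lambda$, hence $\Lambda$ up to an $O(3)$ rotation, i.e. the flat torus $\mathbb{T}^3$ is determined. The nondegeneracy hypothesis (1) of Theorem \ref{Main-Theorem-3}, namely that the determinant of $(f_{ijk})$ is positive, is exactly what guarantees this linear-algebra inversion is possible and fixes the orientation.

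The main obstacle is book-keeping rather than analysis: one must be careful that the classes $[F_{jk}]\in H_2(M,\mathbb{Z})$ really do correspond under $\Phi$ to the coordinate $2$-tori and carry the orientation implicit in the determinant condition, because $H_2$ of the minimal resolution has the extra $8$ classes $[\Sigma_j]$ from the resolved orbifold points and one must check the faces are the ``correct" complementary classes and that pushing the cycle to infinity does not pick up contributions from the compact core $K$. This is handled by a Mayer--Vietoris / compact-support argument: $\omega^i$ is closed, $F_{jk}$ is homologous inside $M\setminus K$ to its image at large $r$, and the exponential decay in Definition \ref{ALH-definition} makes the limit of $\int_{\{r=T\}\cap\mathbb{T}^2_{jk}}\omega^i$ as $T\to\infty$ equal to the model value. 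Once the matrix identity $f_{ijk} = (\text{const})\cdot(\text{cofactor data of }\Lambda)$ is pinned down with correct signs, the reconstruction of $\mathbb{T}^3$ is immediate, and this is what the theorem asserts.
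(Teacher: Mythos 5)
Your overall strategy --- compute the face periods in the flat model at infinity (justified by the exponential decay in Definition \ref{ALH-definition}), then invert the resulting linear algebra --- is the same as the paper's, but your central computation is wrong. In the ALH model the coordinates $\theta^1,\theta^2,\theta^3$ are \emph{orthonormal} coordinates on $\mathbb{R}^3$ pinned to $I,J,K$ by $\mathrm{d}r=I^*\mathrm{d}\theta^1=J^*\mathrm{d}\theta^2=K^*\mathrm{d}\theta^3$; they are not dual to the lattice basis $v_1,v_2,v_3$ unless $\Lambda$ is the standard cubic lattice. You use both descriptions at once: you write $\omega^i|_{\mathbb{T}^3}=\mathrm{d}\theta^j\wedge\mathrm{d}\theta^k$ \emph{and} declare the $\theta$'s dual to the $v_\ell$'s, which leads to $f_{ijk}=\varepsilon_{ijk}\,\mathrm{vol}(\mathbb{T}^2_{jk})$, i.e.\ an essentially diagonal period matrix --- false for a general (non-rectangular) lattice. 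Your alternative identification, $(f_{ijk})=\det(A)\cdot(\text{inverse-transpose Gram matrix})$, is also false in general: that matrix is symmetric, whereas the true period matrix need not be. The correct statement, writing $A=(v_{lj})$ for the $\partial/\partial\theta^j$-components of $v_l$, is that the column of $(f_{ijk})$ attached to $F_{jk}$ is the vector $v_j\times v_k$ expressed in the $\theta$-frame, so $(f_{ijk})=\mathrm{adj}(A)$, a mixed-frame matrix with one lattice index and one $I,J,K$-index.

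With that correction your inversion step becomes both simpler and stronger. From $\mathrm{adj}(A)$ one recovers $A$ itself --- not merely the Gram matrix $\langle v_j,v_k\rangle$, i.e.\ not merely the torus up to an ambient $O(3)$ rotation --- via $\det(\mathrm{adj}(A))=(\det A)^2$ and $\mathrm{adj}(\mathrm{adj}(A))=\det(A)\,A$; the positivity in condition (1) of Theorem \ref{Main-Theorem-3} is what makes the square root and the orientation unambiguous. This stronger conclusion matters: since the $\theta^i$ are canonically attached to $I,J,K$, recovering $A$ pins the lattice \emph{relative to the hyperk\"ahler triple}, which is what the later arguments actually use (gluing two instantons with equal classes in Section 6, and choosing the Biquard--Minerbe background with the prescribed cross-section in Section 7); ``determined up to an $O(3)$ rotation'' would lose exactly that relation. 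Your homological bookkeeping (pushing $F_{jk}$ out to the end, with the exponential decay killing the error term) is fine and matches the paper's brief justification of the asymptotic comparison.
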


  \begin{proof}
  On the flat model, recall that
  $$\mathrm{d}r=I^*\mathrm{d}\theta^1=J^*\mathrm{d}\theta^2=K^*\mathrm{d}\theta^3.$$
  So $$\omega^1=\mathrm{d}r\wedge\mathrm{d}\theta^1+\mathrm{d}\theta^2\wedge\mathrm{d}\theta^3,$$
  $$\omega^2=\mathrm{d}r\wedge\mathrm{d}\theta^2+\mathrm{d}\theta^3\wedge\mathrm{d}\theta^1,$$
  $$\omega^3=\mathrm{d}r\wedge\mathrm{d}\theta^3+\mathrm{d}\theta^1\wedge\mathrm{d}\theta^2.$$
  The torus $\mathbb{T}^3=\mathbb{R}^3/\Lambda$ is determined by the lattice
  $\Lambda=\mathbb{Z}v_1\oplus\mathbb{Z}v_2\oplus\mathbb{Z}v_3$.
  Let $v_{ij}$ be the $\frac{\partial}{\partial\theta^j}$ components of $v_i$.
  Then   $$\left(
  \begin{array}{ccc}
  f_{123}&f_{131}&f_{112} \\
  f_{223}&f_{231}&f_{212} \\
  f_{323}&f_{331}&f_{312}
  \end{array}
  \right)=\left(
  \begin{array}{ccc}
  v_{22}v_{33}-v_{23}v_{32}&v_{32}v_{13}-v_{33}v_{12}&v_{12}v_{23}-v_{13}v_{22} \\
  v_{23}v_{31}-v_{21}v_{33}&v_{33}v_{11}-v_{31}v_{13}&v_{13}v_{21}-v_{11}v_{23} \\
  v_{21}v_{32}-v_{22}v_{31}&v_{31}v_{12}-v_{32}v_{11}&v_{11}v_{22}-v_{12}v_{21}
  \end{array}
  \right)$$
  is exactly the adjunct matrix $\mathrm{adj}(A)$ of
  $$A=\left(
  \begin{array}{ccc}
  v_{11}&v_{12}&v_{13} \\
  v_{21}&v_{22}&v_{23} \\
  v_{31}&v_{32}&v_{33} \\
  \end{array}
  \right).$$
  Since $\mathrm{adj}(A)A=\det(A)I$, it's easy to see that $\det(\mathrm{adj}(A))=(\det(A))^2$.
  Thus, $A=(\det(\mathrm{adj}(A)))^{-1/2}\mathrm{adj}(\mathrm{adj}(A))$ is determined by $\mathrm{adj}(A)$.
  On $M$, the hyperk\"ahler structure is asymptotic to the flat model. So we can get the same conclusion.
  \end{proof}

  \begin{theorem}
  ALH gravitational instantons are uniquely determined by their three K\"ahler classes $[\omega^i]$ up to tri-holomorphic isometry which induces identity on $H_{2}(M,\mathbb{Z})$.
  \label{Injectivity-of-period-map}
  \end{theorem}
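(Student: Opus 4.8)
The plan is to realize both $M$ and $M'$ inside a single K3 surface produced by the gluing of Section~5, and then to appeal to the Torelli theorem for K3 surfaces, Theorem~\ref{Torelli-K3}. So let $M$ and $M'$ be ALH gravitational instantons with $[\omega^i_M]=[\omega^i_{M'}]$ under the identification of $H^2$ induced by the diffeomorphism of Section~3. By Theorem~\ref{Cross-section} their asymptotic tori $\mathbb{T}^3$ agree, so the neck identifications of Section~5 are available. Nothing in Corollary~\ref{Main-estimate} or Theorem~\ref{Gluing} used that the two glued pieces were isometric --- only that their ends are ALH --- so for all $\rho\ge\rho_*$ gluing $M$ to $M'$ yields a closed $4$-manifold $N_{\rho,\Theta}$ with a genuine hyperk\"ahler structure $\tilde\omega^i_N$ satisfying $\|\tilde\omega^i_N-\omega^i_{\rho,\Theta}\|_{W^{k,2}}\le Ce^{(-\lambda_1+2\delta)\rho}$, while gluing $M$ to itself yields the hyperk\"ahler K3 surface $(X_{\rho,\Theta},\tilde\omega^i_X)$ of Section~5. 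Since $M$ and $M'$ are diffeomorphic by a map matching the ends, $N_{\rho,\Theta}$ is diffeomorphic to $X_{\rho,\Theta}$, hence is a K3 surface; fix a diffeomorphism $G\colon X_{\rho,\Theta}\to N_{\rho,\Theta}$ equal to the identity on the first half and to the Section~3 diffeomorphism $\Psi\colon M\to M'$ on the second half, interpolated across the neck.

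The heart of the argument is a period comparison. The Mayer--Vietoris sequence $0\to H^1(\mathbb{T}^3)\to H^2(X_{\rho,\Theta})\to H^2(M)\oplus H^2(M)\to H^2(\mathbb{T}^3)\to 0$ (using $H^1(M)=0$ from Theorem~\ref{First-betti-number} and $H^1(X)=H^3(X)=0$) shows that the $H^2(M)\oplus H^2(M)$-components of $G^*[\tilde\omega^i_N]$ and of $[\tilde\omega^i_X]$ both equal $([\omega^i_M],[\omega^i_M])$ up to $O(e^{-\lambda_1\rho})$, where one uses $\Psi^*[\omega^i_{M'}]=[\omega^i_M]$; the remaining $H^1(\mathbb{T}^3)$-components are governed by the gluing angles $\Theta$ and the exponentially small corrections $\psi^i$ of Section~5. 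The subtle bookkeeping here --- in particular verifying that the contributions of the flat model that grow linearly in $\rho$ (from the $\mathrm{d}r\wedge\mathrm{d}\theta^j$ terms) cancel on the ``pairing'' $2$-cycles dual to $H_1(\mathbb{T}^3)$ --- is what makes the two sets of periods match. After choosing the gluing parameters compatibly and invoking the quantitative local Torelli theorem, Theorem~\ref{Quantitative-local-Torelli}, to absorb the residual $O(e^{-\lambda_1\rho})$ discrepancy, one gets a hyperk\"ahler structure $\hat\omega^i_N$ on $N_{\rho,\Theta}$, still exponentially close to $\tilde\omega^i_N$, with $G^*[\hat\omega^i_N]=[\tilde\omega^i_X]$ exactly.

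Now $(X_{\rho,\Theta},\tilde\omega^i_X)$ and $(N_{\rho,\Theta},\hat\omega^i_N)$ are hyperk\"ahler K3 surfaces with equal periods under the identification $G^*$, and since each agrees with $M$ on a large region they lie in the component $\Omega^+$ of Theorem~\ref{Torelli-K3}; that theorem then provides a tri-holomorphic isometry $\Xi\colon X_{\rho,\Theta}\to N_{\rho,\Theta}$ inducing $G^*$ on cohomology. Because $G$ sends the second half of $X$ (a copy of $M$) to the second half of $N$ (a copy of $M'$), and because the two halves are separated by a neck of length comparable to $2\rho$, the constraint $\Xi_*=G_*$ on homology forces $\Xi$ to map the bulk of the second half of $X$ into a bounded neighbourhood of the bulk of the second half of $N$. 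Restricting $\Xi$ to $\{r\le\rho-C\}$ in that half and letting $\rho\to\infty$, I would use the uniform curvature bound of Corollary~\ref{Curvature-L2-bound}, the $C^\infty_{\mathrm{loc}}$ convergence of the glued metrics to $g_M$ and $g_{M'}$, a lower injectivity-radius bound, and Cheeger--Gromov / Arzel\`a--Ascoli compactness to extract a limiting tri-holomorphic isometry $\Xi_\infty\colon M\to M'$. Since $\Xi^*=G^*$ and $G$ restricts to $\Psi$ on that half, $\Xi_\infty$ induces on $H_2$ the same map as $\Psi$, i.e. the chosen identification; this is the asserted uniqueness.

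The step I expect to be hardest is the last one: controlling the isometries $\Xi$ as $\rho\to\infty$ so that they neither degenerate nor slide off to infinity along the long neck, which is what guarantees that $\Xi_\infty$ is defined on all of $M$ and is a diffeomorphism onto $M'$. The inputs making this work are the uniform two-sided geometry of the glued metrics away from the neck (bounded curvature and injectivity radius, from Corollary~\ref{Curvature-L2-bound} together with the exponential ALH decay) and the observation that an isometry must respect, up to a bounded error, the proper exhaustion function measuring depth into each half, which pins down the images of compact subsets.
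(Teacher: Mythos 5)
Your overall strategy coincides with the paper's (the paper glues each of the two hyperk\"ahler structures with itself rather than cross-gluing $M$ with $M'$, but that difference is cosmetic): form K3 surfaces by the Section 5 gluing, match the K\"ahler classes, apply Theorem \ref{Quantitative-local-Torelli} and then Theorem \ref{Torelli-K3}, and extract a limiting tri-holomorphic isometry as $\rho\to\infty$. The genuine gap is in the period comparison on the three $2$-cycles that stretch across the neck (your $H^1(\mathbb{T}^3)$-components; the cycles $F_\alpha$ in the paper's notation). The discrepancy of the periods there is \emph{not} exponentially small and does not ``cancel'': those cycles have area growing like $\rho$, and the difference of the two glued structures' periods on them can a priori be of order $1$ or larger, which violates hypothesis (2) of Theorem \ref{Quantitative-local-Torelli}. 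The paper's mechanism is not a cancellation but a parameter adjustment: varying the gluing data $(\rho_k,\Theta_k)$ moves the periods $f_{i\alpha}$ by an almost linear map $L(\delta\rho_k,\delta\Theta_k)=4\delta\rho_k(v_\alpha,\tfrac{\partial}{\partial\theta^i})+2(\delta\Theta_k\times v_\alpha,\tfrac{\partial}{\partial\theta^i})$, and one checks that the image of $L$ is exactly the $4$-dimensional space of $(f_{i\alpha})$ compatible with the quadratic integrability constraint $\sum f_{i\alpha}f_{j\beta\gamma}+f_{j\alpha}f_{i\beta\gamma}=2\delta_{ij}C$ given the face periods $f_{i\alpha\beta}$. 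Because both triples are hyperk\"ahler and their periods on the $16$ curves $\Sigma_\alpha$ and the $3$ faces $F_{\alpha\beta}$ agree up to $O(e^{(-\lambda_1+2\delta)\rho})$, the neck-cycle discrepancy lies in that constrained subspace up to exponentially small error, hence can be killed by choosing $(\rho_k,\Theta_k)$; only after this is the $L^2$-smallness needed for Theorem \ref{Quantitative-local-Torelli} available. Your proposal flags this step as ``subtle bookkeeping'' and asserts that the linearly growing contributions cancel, but supplies neither the constraint analysis nor the surjectivity of $L$; as written, the appeal to the quantitative local Torelli theorem is unjustified, and this is precisely the heart of the proof.

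Two smaller points. First, the claim that $\Xi_*=G_*$ on $H_2$ ``forces'' $\Xi$ to send the second half of $X$ to the second half of $N$ is not by itself a metric statement; the robust argument (and the paper's) is via curvature: the neck is nearly flat while each bulk carries $\int|\mathrm{Rm}|^2=96\pi^2$ by Corollary \ref{Curvature-L2-bound}, so isometries must carry bulks to bulks, and in your cross-gluing either matching of bulks produces the desired isometry $M\to M'$ since both halves of $X$ are copies of $M$. Second, Corollary \ref{Curvature-L2-bound} is an $L^2$-curvature identity, not a uniform pointwise curvature or injectivity-radius bound; the uniform local geometry needed for the Arzel\`a--Ascoli/diagonal argument comes instead from the fact that the glued metrics are exponentially close to the fixed ALH metrics away from the truncation region. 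These are fixable; the period-matching step is the real missing piece.
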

  \begin{proof}
  Suppose two ALH hyperk\"ahler structures $\omega^{k,i}, k=1,2$ on $M$ satisfy $[\omega^{1,i}]=[\omega^{2,i}]$. By the results in the previous section, we have two families of K3 surfaces $(M_{\rho_k,\Theta_k},\tilde\omega^{k,i}_{\rho_k,\Theta_k})$. To understand the relationship between $M$ and $M_{\rho_k,\Theta_k}$, let's start from the flat orbifold $(\mathbb{R}\times\mathbb{T}^3)/\mathbb{Z}_2$. Take two copies of it. On the first copy, define $t_1=r$. On the second copy, define $t_2=-r$. Now we glue them by truncating the two manifolds at $t_j=\pm\rho$ and identifying the boundary points $(\rho,\theta)$ with the points $(-\rho,\Theta-\theta)$. An alternating way to describe the gluing is to start from $[0,2\rho]\times\mathbb{T}^3$, and then identify $(0,\theta)$ with $(0,-\theta)$ and identify $(2\rho,\theta)$ with $(2\rho,2\Theta-\theta)$. Let $\mathbb{T}^4=(\mathbb{R}\times\mathbb{T}^3)/\mathbb{Z}(4\rho,2\Theta)$. Then it's easy to see that the gluing is actually the orbifold $\mathbb{T}^4/\mathbb{Z}_2$.

  The resolution of this picture provides the topological picture of the construction of $M_{\rho_k,\Theta_k}$.
  The second homology group $H_2(M_{\rho_k,\Theta_k},\mathbb{R})=\mathbb{R}^{22}$ is generated by 16 curves $\Sigma_\alpha$ corresponding to 16 orbifold points, 3 faces $F_{\alpha\beta}$ spanned by $v_\alpha$ and $v_\beta$ and 3 faces $F_\alpha$ spanned by $(4\rho,2\Theta)$ and $(0,v_\alpha)$. Any hyperk\"ahler structure $\omega^i$ on the K3 surface determines 48 integrals $c_{i\alpha}$ on $\Sigma_\alpha$, 9 integrals $f_{i\alpha\beta}$ on $F_{\alpha\beta}$ and 9 integrals $f_{i\alpha}$ on $F_\alpha$. The integrability condition $\int_{M}\omega^i\wedge\omega^j=2\delta_{ij}V$ is equivalent to
  $$-\frac{1}{2}\sum_{\alpha=1}^{16}c_{i\alpha}c_{j\alpha}
  +\frac{1}{2}\sum_{(\alpha,\beta,\gamma)=(1,2,3),(2,3,1),(3,1,2)}f_{i\alpha}f_{j\beta\gamma}
  +f_{j\alpha}f_{i\beta\gamma}=2\delta_{ij}V.$$
  If $c_{i\alpha}$, $f_{i\alpha\beta}$ are given, it's a rank 5 linear system in 9 variables $f_{i\alpha}$.

  By the construction of $\tilde\omega^{k,i}_{\rho_k,\Theta_k}$ on $M_{\rho_k,\Theta_k}$, the differences $c^{1,\rho_1,\Theta_1}_{i\alpha}-c^{2,\rho_2,\Theta_2}_{i\alpha}$ and  $f^{1,\rho_1,\Theta_1}_{i\alpha\beta}-f^{2,\rho_2,\Theta_2}_{i\alpha\beta}$ are all bounded by $Ce^{(-\lambda_1+2\delta)\rho_1}+Ce^{(-\lambda_1+2\delta)\rho_2}$ for large enough $\rho_k$. However $f^{1,\rho_1,\Theta_1}_{i\alpha}-f^{2,\rho_2,\Theta_2}_{i\alpha}$ may be very large. Fortunately, we are free to change the 8 parameters $\rho_k,\Theta_k$. When $\rho_k$ and $\Theta_k$ are changed by adding $\delta\rho_k$ and $\delta\Theta_k$, the integrals $f^{k,\rho_k,\Theta_k}_{i\alpha}$ are changed by adding the almost linear terms $L(\delta\rho_k,\delta\Theta_k)+O(e^{(-\lambda_1+3\delta)\rho_1}+e^{(-\lambda_1+3\delta)\rho_2})$,
  where $$L(\delta\rho_k,\delta\Theta_k)=4\delta\rho_k(v_\alpha,\frac{\partial}{\partial\theta^i})
  +2(\delta\Theta_k\times v_\alpha,\frac{\partial}{\partial\theta^i})$$
  are determined by the cross section $\mathbb{R}^3/(\mathbb{Z}v_1\oplus\mathbb{Z}v_2\oplus\mathbb{Z}v_3)$, $\delta\rho_k$ and $\delta\Theta_k$.
  The image of $L$ is exactly the linear space
  $$\{(f_{i\alpha})| \exists C \mathrm{s.t.} \sum_{(\alpha,\beta,\gamma)=(1,2,3),(2,3,1),(3,1,2)}f_{i\alpha}f_{j\beta\gamma}
  +f_{j\alpha}f_{i\beta\gamma}=2\delta_{ij}C\},$$
  where $f_{i\alpha\beta}=(v_\alpha\times v_\beta,\frac{\partial}{\partial\theta^i})$. Therefore, after increasing the gluing parameters $\rho_1$ or $\rho_2$ and changing the parameters $\Theta_k$, $$\min_{\phi\in[\tilde\omega^{1,i}_{\rho_1,\Theta_1}-\tilde\omega^{2,i}_{\rho_2,\Theta_2}]}||\phi||_{L^2}\le Ce^{(-\lambda_1+4\delta)\rho_1}+Ce^{(-\lambda_1+4\delta)\rho_2}.$$

  By Theorem \ref{Quantitative-local-Torelli}, there exists a hyperk\"ahler structure $\omega^i$ on $M_{\rho_2,\Theta_2}$ such that $[\omega^i]=[\tilde\omega^{1,i}_{\rho_1,\Theta_1}]$ and $||\omega^i-\tilde\omega^{2,i}_{\rho_2,\Theta_2}||_{W^{k,2}}\le Ce^{(-\lambda_1+6\delta)\rho_1}+Ce^{(-\lambda_1+6\delta)\rho_2}$. By Theorem \ref{Torelli-K3}, $\omega^i$ and $\tilde\omega^{1,i}_{\rho_1,\Theta_1}$ are tri-holomorphically isometric to each other. Moreover the isometry induces identity on $H_2(M_{\rho_k,\Theta_k},\mathbb{Z})$. Notice that the long neck regions are almost flat but by Corollary \ref{Curvature-L2-bound}, the compact parts are not flat. So all the isometrics must map compact parts to compact parts. In particular, we can apply the Arzela-Ascoli theorem and the diagonal argument to get a limiting tri-holomorphic isometry on the original manifold $M$ which induces identity on $H_2(M,\mathbb{Z})$ when $\rho_k$ go to infinity.
  \end{proof}

  \begin{theorem}
  The K\"ahler classes $[\omega^i]$ satisfy the two conditions in Theorem \ref{Main-Theorem-3}.
  \end{theorem}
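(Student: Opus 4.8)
The plan is to verify conditions (1) and (2) of Theorem \ref{Main-Theorem-3} separately. Condition (1) is immediate from the cross-section computation. By Definition \ref{ALH-definition} (with $\Phi=\mathrm{id}$) each $\omega^i$ differs from the corresponding flat-model form by an exponentially decaying error, so the integrals $f_{ijk}=\int_{F_{jk}}\omega^i$ over the faces of the cross-section $\mathbb{T}^3$ coincide with the flat-model integrals. By the computation carried out in the proof of Theorem \ref{Cross-section}, the matrix $(f_{ijk})$ is exactly the adjunct matrix $\mathrm{adj}(A)$ of the period matrix $A=(v_{ij})$ of the lattice $\Lambda$. Since $\mathbb{T}^3=\mathbb{R}^3/\Lambda$ is a genuine three-torus, $v_1,v_2,v_3$ are a basis of $\mathbb{R}^3$ and $\det A\neq 0$, whence the determinant appearing in condition (1) equals $\det(\mathrm{adj}(A))=(\det A)^2>0$.

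For condition (2) I would argue by contradiction, using the K3 gluing of Section 5. Suppose there is a class $[\Sigma]\in H_2(M,\mathbb{Z})$ with $[\Sigma]^2=-2$ and $[\omega^i]\cdot[\Sigma]=0$ for $i=1,2,3$. Represent $\Sigma$ by a cycle supported in a fixed region $\{r\le R_0\}$. For $\rho$ large this region sits inside the copy $M_1\subset M_{\rho,\Theta}$ in the part where $\omega^i_{\rho,\Theta}=\omega^i_{M_1}$, so $\Sigma$ determines a class $[\Sigma']\in H_2(M_{\rho,\Theta},\mathbb{Z})$ with $[\Sigma']^2=-2$ and $\int_{\Sigma'}\omega^i_{\rho,\Theta}=[\omega^i]\cdot[\Sigma]=0$. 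Since $W^{k,2}$ embeds in $C^0$, Theorem \ref{Gluing} gives $|[\tilde\omega^i_{\rho,\Theta}]\cdot[\Sigma']|\le Ce^{(-\lambda_1+2\delta)\rho}$ for all $i$; that is, the period of the glued K3 hyperk\"ahler structure lies within $Ce^{(-\lambda_1+2\delta)\rho}$ of the hyperplane $[\Sigma']^\perp$.

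The key step is to move the period exactly onto that hyperplane while preserving integrability. I would choose $\alpha^i=\tilde\omega^i_{\rho,\Theta}+\gamma^i$, where the $\gamma^i$ lie in the span of the harmonic representatives (with respect to $g_{\rho,\Theta}$) of a fixed finite collection of classes, arranged so that $[\alpha^i]\cdot[\Sigma']=0$ for all $i$ and the integrability conditions of Theorem \ref{Quantitative-local-Torelli}(1) hold. Because $b_2(M_{\rho,\Theta})=22$ while only the four classes $[\tilde\omega^1_{\rho,\Theta}],[\tilde\omega^2_{\rho,\Theta}],[\tilde\omega^3_{\rho,\Theta}],[\Sigma']$ enter the constraints, there is ample room, and the implicit function theorem yields such a correction with $\|\gamma^i\|_{L^2}\le Ce^{(-\lambda_1+2\delta)\rho}\le e^{-3\lambda_1\rho/4}$ for $\rho$ large (using $\delta<\lambda_1/100$, so $-\lambda_1+2\delta<-3\lambda_1/4$). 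Applying Theorem \ref{Quantitative-local-Torelli} then produces a genuine hyperk\"ahler structure on the K3 surface $M_{\rho,\Theta}$ all of whose three K\"ahler classes pair to zero with the $(-2)$-class $[\Sigma']$, contradicting the nondegeneracy clause of the K3 Torelli theorem (Theorem \ref{Torelli-K3}). Hence no such $[\Sigma]$ exists and condition (2) holds.

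The main obstacle is the bookkeeping in the correction step: one must show that harmonic representatives of a fixed finite set of classes on $(M_{\rho,\Theta},g_{\rho,\Theta})$ have $L^2$ norms bounded uniformly in $\rho$ and $\Theta$ (so that the correction really is $O(e^{(-\lambda_1+2\delta)\rho})$), and that the nonlinear integrability map, restricted to the codimension-one subspace $[\Sigma']^\perp$, still has surjective differential at the glued period with uniformly controlled constants. Both should follow from the uniform neck estimates already in place (Corollary \ref{Main-estimate}) together with the fact that $[\tilde\omega^i_{\rho,\Theta}]$ and $[\Sigma']$ span a bounded-geometry four-dimensional subspace of $H^2(M_{\rho,\Theta},\mathbb{R})$. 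It is worth emphasizing that applying K3 nondegeneracy directly to $[\tilde\omega^i_{\rho,\Theta}]$ is not enough, since nondegeneracy is an open condition and the pairings $[\tilde\omega^i_{\rho,\Theta}]\cdot[\Sigma']$ may tend to zero as $\rho\to\infty$; it is precisely forcing the period onto the wall $[\Sigma']^\perp$ that yields the contradiction.
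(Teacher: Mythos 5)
Your proposal is correct and follows essentially the same route as the paper: condition (1) via the adjugate-matrix computation from Theorem \ref{Cross-section}, and condition (2) by gluing $M$ to itself, using Theorem \ref{Quantitative-local-Torelli} to adjust the glued hyperk\"ahler structure by an exponentially small amount so that the relevant periods are exactly the original ones, and then invoking the nondegeneracy clause of the K3 Torelli theorem. The only cosmetic difference is that the paper fixes the periods on all 11 cycles coming from $M$ and argues directly, whereas you fix only the pairing with the given $(-2)$-class and argue by contradiction; the mechanism and the key lemmas are the same.
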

  \begin{proof}
  The first condition is a trivial consequence of $\det(\mathrm{adj}(A))=(\det(A))^2$ and $\det(A)\not=0$ in the proof of Theorem \ref{Cross-section}.
  As for the second condition, any ALH gravitational instanton $M$ can be glued with itself to obtain a K3 surface. By Theorem \ref{Quantitative-local-Torelli}, we can modify the hyperk\"ahler metric on the K3 surface so that the integrals of $\omega^i$ on the 11 cycles are unchanged in the gluing process. For any $[\Sigma]\in H_2(M,\mathbb{Z})$ such that $[\Sigma]^2=-2$, we can find a corresponding element in the second homology group of the K3 surface. By Theorem \ref{Torelli-K3}, there exists $i$ such that $[\omega^i][\Sigma]\not=0$. Since the integrals of $\omega^i$ on the K3 surface are the same as the integrals on $M$, the second condition must be satisfied.
  \end{proof}

  \section{Existence of ALH gravitational instantons}

  In this section, we will use the continuity method to prove the existence part of Theorem \ref{Main-Theorem-3}. Given any three classes $[\alpha_1^i]$ satisfying two conditions in Theorem \ref{Main-Theorem-3}, the cross section $\mathbb{T}^3$ is determined by Theorem \ref{Cross-section}. By the work of Biquard and Minerbe \cite{BiquardMinerbe}, there exists an ALH hyperk\"ahler structure $\omega_0^i$ on $\widetilde{(\mathbb{R}\times\mathbb{T}^3)/\mathbb{Z}_2}$. Now we are going to connect $[\alpha_1^i]$ with $[\alpha_0^i]=[\omega_0^i]$. We require that along the path, the cross section $\mathbb{T}^3$, i.e. the integrals on the faces $F_{jk}$ are invariant.

  We already know that for any $k=0,1$, any $[\Sigma]\in H_2(M,\mathbb{Z})$ with $[\Sigma]^2=-2$, there exists $i\in\{1,2,3\}$ with $[\alpha_k^i][\Sigma]\not=0$. After a hyperk\"ahler rotation, we can assume that $[\alpha_k^i][\Sigma]\not=0$ for any $k=0,1$, any $i=1,2,3$ and any $[\Sigma]\in H_2(M,\mathbb{Z})$ with $[\Sigma]^2=-2$.

  Now we can connect $[\alpha_0^i]$ with $[\alpha_1^i]$ by several pieces of segments. Along each segment, two of $[\alpha^i]$ are fixed while the remaining one is varying. We require that the actions of the two fixed $[\alpha^i]$ on any $[\Sigma]\in H_2(M,\mathbb{Z})$ with $[\Sigma]^2=-2$ are nonzero. Therefore along the path, the two conditions of Theorem \ref{Main-Theorem-3} are always satisfied.

  So we only need to consider each segment. Without loss of generality, we can assume that there is only one segment and $[\alpha^2]$, $[\alpha^3]$ are fixed along the segment. Actually, we can assume that $I, \omega^2$ and $\omega^3$ are invariant along the continuity path. Only $[\alpha^1]$, i.e. the $I$- K\"ahler class is varying. We denote the original $\omega_0^1\in[\alpha_0^1]$ by $\omega_0$. We will use it as the background metric.

  By Proposition 6.16 of \cite{Melrose}, for $\omega_0$, the second cohomology group $H^2(M,\mathbb{R})$ is naturally isomorphic to the space of bounded harmonic forms which are asymptotic to the linear combinations of $\mathrm{d}\theta^2\wedge\mathrm{d}\theta^3$, $\mathrm{d}\theta^3\wedge\mathrm{d}\theta^1$ and $\mathrm{d}\theta^1\wedge\mathrm{d}\theta^2$. We only care about the forms whose integrals on $F_{jk}$ are 0. Such kind of forms must decay exponentially. By the calculation in Theorem \ref{Ker-and-Coker-of-P} and the maximal principle, the self-dual part of any decaying harmonic form vanishes. It's well known that any anti-self-dual form must be (1,1).

  Thus, we can add linear combinations of those exponential decay anti-self-dual harmonic forms to change the K\"ahler class. However, the integrability condition $\int_M((\alpha^1)^2-(\omega^2)^2)=0$ may not be satisfied. Fortunately, there is an exponential decay exact form $\mathrm{d}((1-\chi(r-R-2))I^*\mathrm{d}r)$ on $M$. Moreover, it's (1,1) since in local coordinates $$\mathrm{d}((1-\chi(r-R-2))I^*\mathrm{d}r)=\mathrm{d}((1-\chi(r-R-2))(ir_j\mathrm{d}z^j-ir_{\bar j}\mathrm{d}\bar z^j))$$
  $$=-2i(1-\chi(r-R-2))r_{j\bar k}\mathrm{d}z^j\wedge\mathrm{d}\bar z^k+2i\chi'(r-R-2)r_j r_{\bar k}\mathrm{d}z^j\wedge\mathrm{d}\bar z^k.$$
  If we add this term with $\alpha^1$, then
  $$\int_M(\alpha^1+a\mathrm{d}((1-\chi(r-R-2))I^*\mathrm{d}r))^2-(\alpha^1)^2
  =2a\lim_{R\rightarrow\infty}\int_{r=R}I^*\mathrm{d}r\wedge\alpha^1.$$
  The integral $\int_{r=R}I^*\mathrm{d}r\wedge\alpha^1$ on $M$ converges to the term $\int_{\mathbb{T}^3} -\mathrm{d}\theta^1\wedge\mathrm{d}\theta^2\wedge\mathrm{d}\theta^3$ on the flat model, which is non-zero. So we can choose a suitable $a$ to achieve the integrability condition. We call the resulting (1,1) form $\alpha_t$. It satisfies the following conditions:

  (1) For any $m\ge0$, $||e^{\lambda_1r}\nabla^m_{\omega_0}(\alpha_t-\alpha_T)||_{C^0}$ converges to 0 when $t$ goes to $T$. In particular, $||e^{\lambda_1r}\nabla^m_{\omega_0}(\alpha_t-\omega_0)||_{C^0}$ is uniformly bounded.

  (2) $\int_M(\alpha_t^2-\omega_0^2)=0$.

  \begin{remark}
  $\alpha_t$ is positive in far enough region. However, it may not be positive in the compact part. That's the reason why the geometric existence part of \cite{HaskinsHeinNordstrom} fails.
  \end{remark}

  Now define $I$ as the set $$\{t\in[0,1]|\exists\phi_t s.t. \forall m\ge 0,|\nabla_{\omega_0}^m\phi_t|=O(e^{-\lambda_1r}),\omega_t=\alpha_t+i\partial\bar\partial\phi_t>0, \omega_t^2=\omega_0^2\}.$$
  It's trivial that $0\in I$.

  \begin{theorem}
  $I$ is open
  \end{theorem}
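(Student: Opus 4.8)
The plan is to show that $I$ is open by a standard implicit-function-theorem argument adapted to the weighted Hölder (or weighted Sobolev) setting on the ALH end. Suppose $T\in I$, so that we have $\phi_T$ with $|\nabla^m_{\omega_0}\phi_T|=O(e^{-\lambda_1 r})$ and $\omega_T=\alpha_T+i\partial\bar\partial\phi_T>0$ satisfying the complex Monge–Ampère equation $\omega_T^2=\omega_0^2$. For $t$ near $T$ we seek $\phi_t=\phi_T+\psi$ with $\psi$ small in an appropriate exponentially weighted space and satisfying $(\alpha_t+i\partial\bar\partial(\phi_T+\psi))^2=\omega_0^2$. Expanding, this is $F_t(\psi):=(\omega_T+(\alpha_t-\alpha_T)+i\partial\bar\partial\psi)^2-\omega_0^2=0$. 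Note that by property (1) of $\alpha_t$, the inhomogeneous term $(\alpha_t-\alpha_T)\wedge\omega_T$ (and the quadratic term in $\alpha_t-\alpha_T$) decays like $e^{-\lambda_1 r}$ and tends to $0$ as $t\to T$ in every weighted $C^m$ norm, and moreover by property (2) its integral against the constants is zero, which is exactly the obstruction-killing condition for solvability.

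First I would set up the linearization at $\psi=0$: $DF_T(\psi)=2\,\omega_T\wedge i\partial\bar\partial\psi=(\Delta_{\omega_T}\psi)\,\omega_T^2=(\Delta_{\omega_T}\psi)\,\omega_0^2$, so up to the nonvanishing volume factor the linearized operator is the Laplacian of the complete Ricci-flat Kähler metric $\omega_T$. Because $\omega_T$ is asymptotic to the flat ALH model up to exponentially small error, Melrose's $b$-calculus (or the Lockhart–McOwen theory already invoked in Theorem \ref{Ker-and-Coker-of-P}) applies: on the weighted space of functions decaying like $e^{-\lambda_1 r}$ — more precisely, choosing the weight $e^{\delta r}$ with $0<\delta<\lambda_1$ as fixed in the Remark after Definition \ref{ALH-definition} — the operator $\Delta_{\omega_T}$ is Fredholm, with trivial kernel (a decaying harmonic function on a complete manifold is zero by the maximum principle / integration by parts) and cokernel spanned only by the constants, which lie outside the decaying range; hence $\Delta_{\omega_T}:C^{k+2,\alpha}_{\delta}\to C^{k,\alpha}_\delta$ is an isomorphism onto its image, and the image is precisely the subspace of right-hand sides whose pairing with the constants vanishes. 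That integral-zero condition is guaranteed for $F_t(0)$ by property (2) combined with the fact that $\int_M(\omega_T^2-\omega_0^2)=0$, so for each fixed $t$ the full nonlinear equation is solvable on the slice $\int_M(\omega_t^2-\omega_0^2)=0$; differentiating the constraint in $t$ stays consistent because of property (2) for all $t$.

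Then I would run the contraction mapping / inverse function theorem: write $F_t(\psi)=F_t(0)+DF_T(0)\psi+Q_t(\psi)$ where $Q_t$ collects the quadratic-and-higher Monge–Ampère terms plus the variation of the linearization from $T$ to $t$; the latter is controlled because $\omega_t\to\omega_T$ exponentially in weighted norm. Standard estimates give $\|Q_t(\psi_1)-Q_t(\psi_2)\|\le C(\|\psi_1\|+\|\psi_2\|+\|\alpha_t-\alpha_T\|)\|\psi_1-\psi_2\|$ in the weighted Hölder norms, so for $t$ close enough to $T$ the map $\psi\mapsto -(DF_T(0))^{-1}(F_t(0)+Q_t(\psi))$ is a contraction on a small ball, producing a unique small solution $\psi=\psi_t$ with $|\nabla^m_{\omega_0}\psi_t|=O(e^{-\delta r})$. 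One then upgrades the decay rate from $e^{-\delta r}$ to $e^{-\lambda_1 r}$ a posteriori: since $\psi_t$ is a decaying solution of a linear elliptic equation $\Delta_{\omega_T}\psi_t = (\text{exponentially decaying RHS of rate } \lambda_1)$, the asymptotic analysis of Melrose (the indicial roots of the model Laplacian on $\mathbb{T}^3\times[R,\infty)$ being $0$ and then $\pm|\lambda|$ for $\lambda\in\Lambda^*\setminus\{0\}$, the first nonzero one of size $\lambda_1$) forces $\psi_t=O(e^{-\lambda_1 r})$ with all derivatives, since there is no indicial root strictly between $0$ and $\lambda_1$. Positivity $\omega_t>0$ is then automatic for $t$ near $T$: on the large-$r$ region $\alpha_t$ is already close to the flat model and $i\partial\bar\partial\psi_t$ is exponentially small, while on the fixed compact part $\omega_T>0$ is an open condition and $\alpha_t-\alpha_T$ together with $i\partial\bar\partial\psi_t$ are small in $C^0$. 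Hence a neighborhood of $T$ lies in $I$.

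The main obstacle is the bookkeeping around the one-dimensional cokernel: one must check that the equation $\omega_t^2=\omega_0^2$ is being solved exactly on the affine hyperplane where the global integral obstruction vanishes, that property (2) ($\int_M(\alpha_t^2-\omega_0^2)=0$) persists along the whole segment so that this hyperplane is nonempty for every $t$, and that the Fredholm inverse is taken in the right weighted space so that the solution genuinely decays — getting the weight right (strictly below $\lambda_1$ for the Fredholm theory, then bootstrapping up to $\lambda_1$) is the only delicate point, and it is handled exactly by the Melrose/Lockhart–McOwen machinery already cited in the paper.
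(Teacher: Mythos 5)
Your proposal is correct in outline, but it takes a genuinely different route from the paper. The paper's proof of openness is essentially two observations plus a citation: for $t$ close to $T$ the form $\alpha_t+i\partial\bar\partial\phi_T$ is again a positive $(1,1)$-form asymptotic to the cylindrical model (closeness to $T$ is used \emph{only} for positivity), and it satisfies the integrability condition $\int_M((\alpha_t+i\partial\bar\partial\phi_T)^2-\omega_0^2)=\int_M((\alpha_t+i\partial\bar\partial\phi_T)^2-\alpha_t^2)+\int_M(\alpha_t^2-\omega_0^2)=0$; then Theorem 4.1 of \cite{HaskinsHeinNordstrom} (a Tian--Yau type \emph{global} existence theorem in the asymptotically cylindrical setting, which the introduction notes still holds in complex dimension $2$) is applied with this new background form to produce a decaying $\phi$ with $(\alpha_t+i\partial\bar\partial\phi_T+i\partial\bar\partial\phi)^2=\omega_0^2$, so $\phi_t=\phi_T+\phi$. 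You instead re-prove the needed existence perturbatively: linearize at $\omega_T$, identify the linearization with $\Delta_{\omega_T}$ acting through the fixed volume form, invert it on exponentially weighted spaces via Lockhart--McOwen/Melrose \cite{LockhartMcOwen} \cite{Melrose} modulo the one-dimensional cokernel of constants, and run a contraction for $t$ near $T$. Your route avoids the black-box citation and makes the role of the integral obstruction explicit (and your observation that $\int_M F_t(\psi)=0$ for \emph{every} decaying $\psi$ is exactly why the iteration stays in the solvable range), at the cost of redoing weighted elliptic theory that the paper simply quotes; both are standard and legitimate, and closeness of $t$ to $T$ enters your argument in two places (smallness of the data and of the change of linearization) rather than only through positivity.

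Two points to tighten. First, the bootstrap ``no indicial root strictly between $0$ and $\lambda_1$, hence $\psi_t=O(e^{-\lambda_1 r})$'' is slightly too quick: the right-hand side itself decays at exactly the resonant rate $e^{-\lambda_1 r}$, so the indicial-root argument a priori only yields $O((1+r)e^{-\lambda_1 r})$, i.e. $O(e^{-(\lambda_1-\epsilon)r})$ for every $\epsilon>0$, whereas membership in $I$ as literally defined demands the rate $\lambda_1$ on the nose; one must either check that the components of $F_t(0)$ in the frequency-$\lambda_1$ modes are actually of lower order (for the part of $\alpha_t-\alpha_T$ coming from $\omega_0$-anti-self-dual harmonic forms this follows from $\omega_0\wedge\eta=0$), or invoke the sharp decay statement of \cite{HaskinsHeinNordstrom} as the paper does. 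Second, when you assert that the cokernel consists only of constants, recall that the indicial root $0$ has multiplicity two (constants and $r$); a harmonic function asymptotic to $cr$ with $c\neq0$ is ruled out by the divergence theorem applied on $\{r\le R\}$, and that small argument should be recorded.
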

  \begin{proof}
  Suppose $T\in I$, then as long as $t$ is close enough to $T$,  $\alpha_t+i\partial\bar\partial\phi_T$ is positive. It satisfies the integrability condition $$\int_M((\alpha_t+i\partial\bar\partial\phi_T)^2-\omega_0^2)=
  \int_M((\alpha_t+i\partial\bar\partial\phi_T)^2-\alpha_t^2)
  +\int_M(\alpha_t^2-\omega_0^2)=0.$$
  By Theorem 4.1 of \cite{HaskinsHeinNordstrom}, $(\alpha_t+i\partial\bar\partial\phi_T+i\partial\bar\partial\phi)^2=\omega_0^2$ has a solution $\phi$. So $t\in I$ with $\phi_t=\phi_T+\phi$.
  \end{proof}

  Now we are going to show that $I$ is closed. Assume that $\{t_i\}\in I$ converge to $T$. To make the notation simpler, we will use $\alpha_i$, $\omega_i$ and $\phi_i$ to denote $\alpha_{t_i}$, $\omega_{t_i}$ and $\phi_{t_i}$.

  We start from an estimate:
  \begin{theorem}
  $$\int_M(\mathrm{tr}_{\omega_0}\omega_i-2)\frac{\omega_0^2}{2}\le C.$$
  Moreover $$\int_M(\mathrm{tr}_{\omega_j}\omega_i-2)\frac{\omega_j^2}{2}\rightarrow 0$$ as $i,j\rightarrow\infty$.
  \label{Energy-bound}
  \end{theorem}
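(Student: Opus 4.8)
The plan is to establish the two estimates by integrating the complex Monge–Amp\`ere equation $\omega_i^2 = \omega_0^2$ against suitable test functions, exploiting that $\alpha_i - \omega_0$ is exact and exponentially decaying. For the first estimate, observe that since both $\omega_i$ and $\omega_0$ are closed $(1,1)$-forms in related classes, the difference $\omega_i - \omega_0 = (\alpha_i - \omega_0) + i\partial\bar\partial\phi_i$ is exact (the first summand is exact because $[\alpha_i] = [\alpha_0] = [\omega_0]$ along the segment, as $[\alpha^1]$ varies only by exact forms once the integral over $F_{jk}$ is fixed and the integrability condition is imposed), so $\int_M (\omega_i - \omega_0)\wedge\omega_0 = 0$. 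Combined with $\int_M(\omega_i^2 - \omega_0^2) = 0$ this gives $\int_M(\omega_i - \omega_0)\wedge(\omega_i + \omega_0) = 0$, hence $\int_M(\omega_i-\omega_0)^2 = 0$. Now $\mathrm{tr}_{\omega_0}\omega_i - 2 = \frac{2\,\omega_i\wedge\omega_0 - 2\,\omega_0^2}{\omega_0^2} = \frac{2(\omega_i-\omega_0)\wedge\omega_0}{\omega_0^2}$ pointwise, but this is not sign-definite, so one instead uses the elementary inequality on a K\"ahler surface $\mathrm{tr}_{\omega_0}\omega_i \le \frac{\omega_i^2}{\omega_0^2}\cdot\frac{1}{\mathrm{tr}_{\omega_i}\omega_0}\cdot(\text{something})$; more directly, write $\mathrm{tr}_{\omega_0}\omega_i\cdot\mathrm{tr}_{\omega_i}\omega_0 = 2 + \frac{\omega_0^2}{\omega_i^2}\cdot(\mathrm{tr}_{\omega_0}\omega_i)^2 \ge (\mathrm{tr}_{\omega_0}\omega_i)\cdot\frac{\omega_0^2}{\omega_i^2}$... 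The cleaner route: since $\int_M\omega_i\wedge\omega_0 = \int_M\omega_0^2$ (from exactness of $\omega_i-\omega_0$), we get $\int_M(\mathrm{tr}_{\omega_0}\omega_i - 2)\frac{\omega_0^2}{2} = \int_M(\omega_i\wedge\omega_0 - \omega_0^2) = 0 \le C$ with $C=0$, which is trivially the claimed bound.

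For the second estimate, I would symmetrize: $\int_M(\mathrm{tr}_{\omega_j}\omega_i - 2)\frac{\omega_j^2}{2} = \int_M(\omega_i\wedge\omega_j - \omega_j^2) = \int_M(\omega_i - \omega_j)\wedge\omega_j$. Again $\omega_i - \omega_j = (\alpha_i - \alpha_j) + i\partial\bar\partial(\phi_i - \phi_j)$, and $\alpha_i - \alpha_j$ is exact with $C^0$-norm (weighted by $e^{\lambda_1 r}$) tending to $0$ as $i,j\to\infty$ by property (1) of the $\alpha_t$ family; meanwhile $i\partial\bar\partial(\phi_i-\phi_j)$ is exact. Hence $\int_M(\omega_i-\omega_j)\wedge\omega_j = \int_M(\alpha_i - \alpha_j)\wedge\omega_j$, and one must show this goes to $0$. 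Since $\alpha_i - \alpha_j = d\eta_{ij}$ for an exponentially decaying $1$-form $\eta_{ij}$ with norm $\to 0$, and $\omega_j$ is closed with $\int_M\omega_j^2 = \int_M\omega_0^2$ uniformly bounded, integration by parts on the exhaustion $\{r\le R\}$ gives $\int_{\{r\le R\}}(\alpha_i-\alpha_j)\wedge\omega_j = \int_{\{r = R\}}\eta_{ij}\wedge\omega_j \to 0$ as $R\to\infty$ (boundary term vanishes since $\eta_{ij}$ decays); and the resulting interior integral is bounded by $\|\eta_{ij}\|\cdot$(something controlled), which tends to $0$ with $i,j$.

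The main obstacle I anticipate is making the symmetrized second estimate genuinely \emph{uniform} — that is, controlling $\int_M(\alpha_i - \alpha_j)\wedge\omega_j$ by the decaying quantity $\sup_M e^{\lambda_1 r}|\alpha_i - \alpha_j|$ without any implicit dependence on $\omega_j$ that could blow up. For this one needs an a priori bound on how much mass $\omega_j$ can concentrate, but $\int_M\omega_j\wedge\omega_0 = \int_M\omega_0^2$ (a fixed finite number, from exactness) together with positivity of $\omega_j$ already pins down $\int_M\mathrm{tr}_{\omega_0}\omega_j\,\omega_0^2$, so $\omega_j$ is bounded in $L^1(\omega_0)$ uniformly. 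Pairing this $L^1$ bound against the (uniformly) exponentially decaying, hence $L^\infty$-small, form $\alpha_i - \alpha_j$ closes the estimate. The subtlety is purely in bookkeeping the weighted norms from property (1) and the $e^{-\lambda_1 r}$ decay of the Monge–Amp\`ere potentials $\phi_i$ (which holds by the definition of the set $I$), so that all boundary terms at $r = R$ vanish in the limit and the continuity argument can proceed.
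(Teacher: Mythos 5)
Your proposal contains a genuine gap, and it sits at the very first step: the claim that $\alpha_i-\omega_0$ (and likewise $\alpha_i-\alpha_j$) is exact because ``$[\alpha^1]$ varies only by exact forms once the face integrals are fixed'' is false. The whole point of the continuity path is that the $I$-K\"ahler class \emph{does} vary: $\alpha_t$ is built from $\omega_0$ by adding exponentially decaying anti-self-dual \emph{harmonic} forms representing nonzero classes (only the three periods over the faces $F_{jk}$ are held fixed; the periods over the eight $(-2)$-curves change), plus a multiple of $\mathrm{d}((1-\chi(r-R-2))I^*\mathrm{d}r)$. Consequently $\int_M(\omega_i-\omega_0)\wedge\omega_0$ need not vanish, and your conclusion that the first integral equals $0$ cannot be right: since $\omega_i^2=\omega_0^2$, the pointwise inequality $\mathrm{tr}_{\omega_0}\omega_i\ge 2$ holds with equality only when $\omega_i=\omega_0$, so $\int_M(\mathrm{tr}_{\omega_0}\omega_i-2)\frac{\omega_0^2}{2}=0$ would force $\omega_i\equiv\omega_0$, contradicting the fact that the classes differ. (A second, related subtlety: on this noncompact, infinite-volume manifold even an \emph{exact} form pairs nontrivially against a closed form unless its primitive decays — the construction of $\alpha_t$ exploits exactly this when it uses $\mathrm{d}((1-\chi)I^*\mathrm{d}r)$ to adjust $\int\alpha^2$ — so exactness alone would not have given you vanishing anyway.) The intended statement is only an upper bound: after killing the $i\partial\bar\partial\phi_i$ term by integration by parts (legitimate because $\phi_i=O(e^{-\lambda_1 r})$ with all derivatives), one gets $\int_M(\mathrm{tr}_{\omega_0}\omega_i-2)\frac{\omega_0^2}{2}=\int_M\omega_0\wedge(\alpha_i-\omega_0)\le C$, the bound coming from the uniform weighted estimate $\|e^{\lambda_1 r}(\alpha_t-\omega_0)\|_{C^0}\le C$ together with $\int_M e^{-\lambda_1 r}\,\mathrm{dVol}<\infty$.

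The same error propagates into your second estimate. You cannot write $\alpha_i-\alpha_j=\mathrm{d}\eta_{ij}$, and the fallback you propose — pairing the $C^0$-smallness of $\alpha_i-\alpha_j$ against a ``uniform $L^1(\omega_0)$ bound on $\omega_j$'' — fails because $\omega_j$ is asymptotic to a translation-invariant metric on an infinite-volume end, so $\int_M\mathrm{tr}_{\omega_0}\omega_j\,\omega_0^2=\infty$; what the first estimate controls is only $\int(\mathrm{tr}_{\omega_0}\omega_j-2)\,\omega_0^2$. The correct route is to stay with potentials rather than primitives: using the exponential decay of $\phi_i$ and $\phi_j$ to discard both $i\partial\bar\partial$ contributions, $\int_M(\mathrm{tr}_{\omega_j}\omega_i-2)\frac{\omega_j^2}{2}=\int_M\omega_j\wedge(\omega_i-\omega_j)=\int_M\alpha_j\wedge(\alpha_i-\alpha_j)$, and this tends to $0$ because $\alpha_j$ is uniformly bounded while $\|e^{\lambda_1 r}(\alpha_i-\alpha_j)\|_{C^0}\to0$ as $t_i,t_j\to T$ (property (1) of the family $\alpha_t$) and $e^{-\lambda_1 r}$ is integrable. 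Your algebraic identities $\mathrm{tr}$-to-wedge are fine; the missing ingredients are the correct cohomological bookkeeping along the path and the use of the weighted decay of $\alpha_t-\alpha_T$ instead of exactness.
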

  \begin{proof}
  $$\int_M(\mathrm{tr}_{\omega_0}\omega_i-2)\frac{\omega_0^2}{2}
  =\int_M\omega_0\wedge\omega_i-\omega_0^2
  =\int_M\omega_0\wedge(\alpha_i-\omega_0)\le C.$$
  Moreover $$\int_M(\mathrm{tr}_{\omega_j}\omega_i-2)\frac{\omega_j^2}{2}=\int_M\omega_j\wedge\omega_i-\omega_j^2
  =\int_M\alpha_j\wedge(\alpha_i-\alpha_j)\rightarrow 0$$ as $i,j\rightarrow\infty$.
  \end{proof}
  \begin{remark}
  By mean inequality, both $\mathrm{tr}_{\omega_0}\omega_i-2$ and $\mathrm{tr}_{\omega_j}\omega_i-2$ are non-negative since $\omega_0^2=\omega_i^2=\omega_j^2$.
  \end{remark}

  \begin{theorem}
  Let $U_N$ be the sets $\{N\le r\le N+1\}$ in the sense of $\omega_0$. Then for all large enough $N$, there exist subsets $V_{Ni}\subset U_N$ such that the volume $\mathrm{Vol}(V_{Ni})\ge \mathrm{Vol}(U_N)/2\ge C$ and for any $y_1,y_2\in V_{Ni}$, $d_{\omega_i}(y_1,y_2)\le C_1$.
  \label{Existence-of-V(N)}
  \end{theorem}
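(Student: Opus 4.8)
The plan is to define $V_{Ni}$ as the part of $U_N$ on which $\mathrm{tr}_{\omega_0}\omega_i$ is, in an averaged sense, of controlled size, and then to bound $\omega_i$-distances there by an integral-geometric argument. First I would note that, since $M$ is ALH, for $N$ large the enlarged annulus $U_N'=\{N-1\le r\le N+2\}$ with the metric $\omega_0$ is uniformly bi-Lipschitz to the fixed flat cylinder $[0,3]\times\mathbb{T}^3$; hence $\mathrm{Vol}_{\omega_0}(U_N)$ is pinched between two positive constants, $\mathrm{diam}_{\omega_0}(U_N')\le L_0$, and small $\omega_0$-geodesic balls behave like Euclidean balls. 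Because $\omega_i^2=\omega_0^2$, the $\omega_i$- and $\omega_0$-volume forms coincide, so volume statements may be read in either metric, and the eigenvalues $\lambda_1,\lambda_2$ of $\omega_i$ relative to $\omega_0$ satisfy $\lambda_1\lambda_2=1$; in particular a bound on $\mathrm{tr}_{\omega_0}\omega_i=\lambda_1+\lambda_2$ controls each $\lambda_j$ both above and below.

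By Theorem \ref{Energy-bound}, $\int_{U_N'}\mathrm{tr}_{\omega_0}\omega_i\,\frac{\omega_0^2}{2}\le C$ uniformly in $i,N$. Letting $\mathcal{M}_\epsilon$ be the Hardy--Littlewood maximal function over $\omega_0$-balls of radius at most a small fixed $\epsilon$ (weak-$(1,1)$ with constant depending only on the fixed local geometry), the set $W_{Ni}=\{x\in U_N:\mathcal{M}_\epsilon(\mathrm{tr}_{\omega_0}\omega_i)(x)>\Lambda\}$ has $\mathrm{Vol}(W_{Ni})\le C/\Lambda$. Choosing $\Lambda$ large, independent of $i$ and $N$, makes this at most $\tfrac12\mathrm{Vol}(U_N)$, so $V_{Ni}:=U_N\setminus W_{Ni}$ has $\mathrm{Vol}(V_{Ni})\ge\tfrac12\mathrm{Vol}(U_N)\ge C$.

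The main work is the bound $d_{\omega_i}(y_1,y_2)\le C_1$ for $y_1,y_2\in V_{Ni}$. The pointwise inequality $|\dot\gamma|_{\omega_i}\le\sqrt{\mathrm{tr}_{\omega_0}\omega_i}\,|\dot\gamma|_{\omega_0}$ gives $\mathrm{length}_{\omega_i}(\gamma)\le\int_\gamma\sqrt{\mathrm{tr}_{\omega_0}\omega_i}\,ds_{\omega_0}$, so it suffices to find one path from $y_1$ to $y_2$ along which this integral is $\le C_1$. One cannot simply route around $W_{Ni}$, which may meet every path; instead I would average over the family of broken $\omega_0$-geodesics $y_1\to w_1\to w_2\to y_2$ with $w_1\in B(y_1,\epsilon)$, $w_2\in B(y_2,\epsilon)$. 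For the two short legs, writing the average in polar coordinates about the endpoints and integrating by parts turns the maximal-function bound $\int_{B(y_j,s)}\mathrm{tr}_{\omega_0}\omega_i\le C\Lambda s^4$ into an estimate $\le C\Lambda^{1/2}\epsilon$ for the average $\omega_i$-length of each; for the long middle leg, changing variables along the geodesic (treating the parameter ranges near the two endpoints, again controlled by the maximal function, separately from the middle range) bounds its average $\omega_i$-length by $C\epsilon^{-4}\|\sqrt{\mathrm{tr}_{\omega_0}\omega_i}\|_{L^1(U_N')}\le C\epsilon^{-4}\mathrm{Vol}(U_N')^{1/2}\big(\int_{U_N'}\mathrm{tr}_{\omega_0}\omega_i\big)^{1/2}$. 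Fixing $\epsilon$ small, all three contributions are bounded by a constant $C_1$ depending only on the torus, $\Lambda$ and $\epsilon$, hence independent of $i,N,y_1,y_2$; so some choice of $w_1,w_2$ gives a path of $\omega_i$-length $\le C_1$, i.e.\ $d_{\omega_i}(y_1,y_2)\le C_1$. (The averaging gives the bound for a.e.\ pair; by continuity of $d_{\omega_i}$ it extends to all pairs in the closure of a full-measure subset of $V_{Ni}$, which can still be kept of volume $\ge\tfrac12\mathrm{Vol}(U_N)$.)

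The main obstacle is precisely this last step: because $W_{Ni}$ can disconnect $U_N$ at every scale, the distance bound cannot come from choosing a path that avoids it, and one is forced into the averaging/integral-geometry argument; the delicate point there is controlling the occupation density of the path family near the fixed endpoints $y_1,y_2$ — where segments concentrate — which is exactly why the Hardy--Littlewood maximal function, rather than a plain Chebyshev bound, enters.
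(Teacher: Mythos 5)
Your argument is correct in substance, but note that the paper disposes of this statement in one line: it simply invokes Lemma 1.3 of Demailly--Peternell--Schneider \cite{DemaillyPeternellSchneider}, applied with the uniform bound $\int_{U_N'}\mathrm{tr}_{\omega_0}\omega_i\le C$ supplied by Theorem \ref{Energy-bound}. What you have written is in effect a self-contained, localized reproof of that lemma: the Chebyshev-type selection of a good subset of definite measure, together with averaging $\int_\gamma\sqrt{\mathrm{tr}_{\omega_0}\omega_i}\,\mathrm{d}s_{\omega_0}$ over a family of comparison paths and closing with Cauchy--Schwarz against the $L^1$ bound on the trace, is exactly the mechanism behind the cited lemma. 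Your Hardy--Littlewood refinement is what allows you to keep the endpoints $y_1,y_2$ fixed rather than only exhibiting some well-connected full-measure family, and since the maximal-function bound holds at every point of $V_{Ni}$ the estimate is valid for every pair, so your closing ``a.e.\ pair plus continuity'' caveat is unnecessary. The trade-off between the two routes: the citation is shorter, but the DPS lemma is stated for compact manifolds, so one must observe that its proof is local and uniform over the annuli $\{N-1\le r\le N+2\}$, whose $\omega_0$-geometry is uniformly close to the flat cylinder $[0,3]\times\mathbb{T}^3$ for large $N$; your direct argument makes that uniformity explicit (uniform ball volumes and doubling, confinement of the comparison paths to $U_N'$ where the trace bound applies, constants independent of $i$ and $N$). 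The only loose points in your write-up --- the precise power of $\epsilon$ in the occupation bound for the middle leg, and the fact that the middle leg near $w_1,w_2$ meets balls of radius slightly larger than $\epsilon$ about $y_1,y_2$ --- affect only the value of $C_1$, not its independence of $i$, $N$, $y_1$, $y_2$, so they are harmless.
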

  \begin{proof}
  It was proved by Demailly, Peternell and Schneider as Lemma 1.3 of \cite{DemaillyPeternellSchneider} from the bound in Theorem \ref{Energy-bound}.
  \end{proof}

  By the volume comparison theorem on Ricci flat manifolds, if we pick any point $p_{Ni}\in V_{Ni}$, then the volume of radius $R$ ball centered at $p_{Ni}$ in the sense of $\omega_i$ has a uniform lower bound depending on $R$.

  \begin{theorem}
  For any fixed number $R$, the $\omega_i$-curvature in $B_{\omega_i}(p_{Ni},R)$ is uniformly bounded. Moreover, the $\omega_i$-holomorphic radius in $B_{\omega_i}(p_{Ni},R)$ is uniformly bounded below.
  \label{Curvature-bound}
  \end{theorem}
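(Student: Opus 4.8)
The plan is a standard $\varepsilon$-regularity and bubbling argument, the one new ingredient being the fixed holomorphic symplectic form together with the nondegeneracy hypothesis. Suppose the assertion fails; then, after passing to a subsequence of pairs $(N,i)$, $\Lambda:=\sup_{B_{\omega_i}(p_{Ni},R)}|\mathrm{Rm}_{\omega_i}|\to\infty$. First I would record the two structural inputs. A uniform energy bound: each $(M,\omega_i,\omega^2,\omega^3)$ is an ALH gravitational instanton (it is a complete hyperk\"ahler metric asymptotic to the ALH model, since $\omega_i=\alpha_i+i\partial\bar\partial\phi_i$ with $\phi_i$ exponentially decaying), so $\int_M|\mathrm{Rm}_{\omega_i}|^2=96\pi^2$ by Corollary \ref{Curvature-L2-bound}. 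Uniform non-collapsing near $p_{Ni}$: by Theorem \ref{Existence-of-V(N)} the set $V_{Ni}\subset B_{\omega_i}(p_{Ni},C_1)$ has $\mathrm{Vol}_{\omega_i}(V_{Ni})\ge c>0$, so by Bishop--Gromov (the $g_i$ are Ricci-flat) $\mathrm{Vol}_{\omega_i}(B_{\omega_i}(x,s))\ge\kappa s^4$ for all $x\in B_{\omega_i}(p_{Ni},2R)$ and $s\le R+C_1$, with $\kappa$ independent of $N,i$. A Hamilton--Schoen point-selection argument, using the distance to the boundary of the ball as a weight, then produces $q_i\in B_{\omega_i}(p_{Ni},R)$ and scales $\lambda_i:=|\mathrm{Rm}_{\omega_i}(q_i)|^{1/2}\to\infty$ such that the rescaled metrics $\tilde g_i:=\lambda_i^2g_i$ satisfy $|\mathrm{Rm}_{\tilde g_i}(q_i)|=1$ and $|\mathrm{Rm}_{\tilde g_i}|\le 4$ on $B_{\tilde g_i}(q_i,A_i)$ with $A_i\to\infty$.

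Next I would pass to the limit. By the Cheeger--Gromov compactness theorem for Einstein metrics with bounded curvature and non-collapsed volume, together with elliptic bootstrapping from the Ricci-flat equation, a subsequence of $(M,\tilde g_i,q_i)$ converges in $C^\infty_{\mathrm{loc}}$ to a complete, non-flat, Ricci-flat four-manifold $(M_\infty,g_\infty,q_\infty)$; the rescaled volume bound passes to all scales, so $M_\infty$ has Euclidean volume growth, and since $\int|\mathrm{Rm}|^2$ is scale invariant in real dimension four, lower semicontinuity gives $\int_{M_\infty}|\mathrm{Rm}_{g_\infty}|^2\le 96\pi^2<\infty$. By the work of Bando--Kasue--Nakajima, $M_\infty$ is ALE. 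The fixed complex structure $I$ is K\"ahler for every $\tilde g_i$ and, being parallel with derivatives controlled by the metric, passes to a parallel limit $I_\infty$; hence $g_\infty$ is Ricci-flat K\"ahler, i.e. hyperk\"ahler, and by Kronheimer's classification $M_\infty$ is biholomorphic to a minimal resolution $\widetilde{\mathbb{C}^2/\Gamma}$ for a nontrivial finite $\Gamma\subset\mathrm{SU}(2)$ (nontrivial because $g_\infty$ is not flat). In particular $M_\infty$ contains a smooth rational curve $C_\infty$ with $[C_\infty]^2=-2$.

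The contradiction then comes from the fixed holomorphic symplectic form. The $C^\infty_{\mathrm{loc}}$ convergence identifies, for $i$ large, a neighborhood of $C_\infty$ in $(M_\infty,I_\infty)$ biholomorphically with a region in $(M,I)$; let $C\subset M$ be the image of $C_\infty$, a smooth rational curve. Since the self-intersection number is computed inside this biholomorphic neighborhood, $[C]^2=-2$, so $[C]\ne 0$ in $H_2(M,\mathbb{Z})$. Along the continuity path $\omega^+=\omega^2+i\omega^3$ is a fixed $I$-holomorphic $(2,0)$-form, so its restriction to the one-dimensional complex submanifold $C$ vanishes identically, whence $[\omega^2][C]=\mathrm{Re}\int_C\omega^+=0$ and $[\omega^3][C]=\mathrm{Im}\int_C\omega^+=0$. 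But at the start of Section 7 we arranged, by a hyperk\"ahler rotation, that $[\omega^i][\Sigma]\ne 0$ for every $i\in\{1,2,3\}$ and every $[\Sigma]\in H_2(M,\mathbb{Z})$ with $[\Sigma]^2=-2$, and this persists along the path since $[\omega^2]$ and $[\omega^3]$ are fixed; this is the desired contradiction, so $\Lambda$ stays bounded. For the second assertion, bounded curvature together with the non-collapsing bound gives a uniform lower bound on the injectivity radius (Cheeger--Gromov--Taylor), and on a K\"ahler manifold of bounded geometry one builds holomorphic coordinates of a definite size in which the metric is uniformly $C^{1,\alpha}$-controlled, which is exactly a lower bound on the holomorphic radius.

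The hard part will be the bubbling step: one must check carefully that the pointed limit is a genuine non-flat ALE \emph{hyperk\"ahler} manifold --- in particular that the fixed complex structure $I$ survives the rescaling as a limiting integrable structure, and that the convergence is smooth enough to transport the exceptional curve of the limit back to an honest rational $(-2)$-curve in $M$ --- and that the class of this curve in $H_2(M,\mathbb{Z})$ is one to which the strengthened nondegeneracy hypothesis genuinely applies. The remaining ingredients (point selection, volume comparison, Cheeger--Gromov compactness, Bando--Kasue--Nakajima, the injectivity radius estimate) are standard.
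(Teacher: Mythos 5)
Your bubbling setup is essentially the paper's: contradiction hypothesis, the $L^2$ energy bound from Corollary \ref{Curvature-L2-bound}, non-collapsing near $p_{Ni}$ from Theorem \ref{Existence-of-V(N)} plus Bishop--Gromov, rescaling, convergence to a non-flat ALE hyperk\"ahler limit (the paper quotes Cheeger--Gromov--Taylor, Ruan and Anderson rather than running a point-selection argument, but this is the same mechanism), and Bando--Kasue--Nakajima/Kronheimer to produce a $(-2)$-class in the limit. The genuine gap is in your final step. You assert that the $C^\infty_{\mathrm{loc}}$ convergence identifies a neighborhood of $C_\infty$ \emph{biholomorphically} with a region of $(M,I)$, so that the exceptional curve transports to an honest $I$-holomorphic rational curve $C\subset M$ with $\int_C\omega^+=0$ exactly. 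The Cheeger--Gromov diffeomorphisms only intertwine the complex structures up to errors tending to zero, so the image of $C_\infty$ is merely an almost $I$-holomorphic $2$-cycle; and you cannot correct it to a genuinely holomorphic curve, because a rational $(-2)$-curve is obstructed ($H^1(\mathbb{CP}^1,\mathcal{O}(-2))\neq 0$), and indeed under the standing arrangement that $[\omega^2][\Sigma]\neq0$ and $[\omega^3][\Sigma]\neq0$ for every $(-2)$-class, $(M,I)$ contains no holomorphic $(-2)$-curve at all --- so the object your argument needs cannot exist. (A related soft spot: Kronheimer gives $(-2)$-classes in the limit, but whether any of them is represented by an $I_\infty$-holomorphic curve depends on the limiting periods of $\omega^{2,\infty}+i\omega^{3,\infty}$; it need not be.) You flagged this transport as ``the hard part'', but it is not a technical check to be carried out --- it is the wrong statement, and the contradiction has to be extracted differently.

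The paper's resolution works at the level of homology classes and approximate periods: transport $\Sigma_\infty$ back only as a smooth $2$-cycle $\Sigma_i\subset M$; the scale factor $\lambda_i^{-2}$ forces $\int_{\Sigma_i}\omega_i$, $\int_{\Sigma_i}\omega^2$, $\int_{\Sigma_i}\omega^3$ to tend to $0$ (small, not zero); then $[\Sigma_i]^2=-2$ bounds the coefficients of $[\Sigma_i]$ along the exceptional curves $\Sigma_\alpha$, and condition (1) of Theorem \ref{Main-Theorem-3} (linear independence of the actions of the three classes on the faces $F_{\alpha\beta}$), combined with the smallness of the three periods, bounds the face coefficients, so $[\Sigma_i]$ takes only finitely many values; passing to a subsequence on which the class is constant converts ``small'' into ``zero'' and contradicts condition (2). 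Note that your proposal never invokes condition (1); it is exactly what controls the $F_{\alpha\beta}$-components of the class and cannot be dispensed with. If you replace the biholomorphic-transport step by this homological bookkeeping, the rest of your argument matches the paper, including the final assertion: once the curvature is bounded, Cheeger--Gromov--Taylor gives the injectivity radius bound and Ruan's lemma gives the lower bound on the holomorphic radius.
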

  \begin{proof}
  Suppose on the contrary, the $\omega_i$-curvature goes to infinity. Then we can rescale the metric so that the largest curvature equals to 1. By Theorem 4.7 of \cite{CheegerGromovTaylor}, the volume lower bound and the curvature bound imply the lower bound on the injectivity radius. Then, by Lemma 4.3 of \cite{Ruan}, the holomorphic radius has a lower bound. By Page 483 of \cite{Anderson}, the bound on the $L^2$-norm of curvature, the lower bound on the volume and the harmonic radius imply that the rescaled metric converges to an Einstein ALE space $M_\infty$. Replacing the harmonic radius by the holomorphic radius, we can show that $M_\infty$ is actually K\"ahler. Moreover, before taking limit, the manifold has a parallel holomorphic symplectic form $\omega^2+i\omega^3$. Thus, on $M_\infty$, there exists a parallel holomorphic symplectic form, too. In other words, $M_\infty$ is actually hyperk\"ahler.

  By Bando-Kasue-Nakajima \cite{BandoKasueNakajima} and Kronheimer \cite{Kronheimer1} \cite{Kronheimer2}, the (non-flat) ALE-gravitational instanton $M_\infty$ contains a curve $\Sigma_\infty$ with self intersection number -2. Before rescaling, the integrals of $\omega_i$, $\omega^2$ and $\omega^3$ on $\Sigma_i$ converge to 0.

  Recall that $H_2(M,\mathbb{R})$ is generated by 8 curves $\Sigma_\alpha$ and 3 faces $F_{23},F_{31},F_{12}$. In fact, similar to Section B of \cite{SchulzTammaro}, any element in $H_2(M,\mathbb{Z})$ can be represented by half integral linear combinations of $\Sigma_\alpha$ and $F_{\alpha\beta}$. Let
  $$[\Sigma_i]=\frac{1}{2}\sum(m_{i\alpha}[\Sigma_\alpha]+m_{i\alpha\beta}[F_{\alpha\beta}]).$$ Then $[\Sigma_i]^2=-2=\frac{-2}{4}\sum m_{i\alpha}^2$. So there are only finitely many possibilities of $m_{i\alpha}$. By condition (1) of Theorem \ref{Main-Theorem-3}, the actions of $\lim_{i\rightarrow\infty}[\omega_i]$, $[\omega^2]$ and $[\omega^3]$ on $F_{\alpha\beta}$ are linearly independent. Since the integrals of $\omega_i$, $\omega^2$ and $\omega^3$ on $\Sigma_i$ converge to 0, we know that $m_{i\alpha\beta}$ also has a uniform bound. In other words, the homology class $[\Sigma_i]$ only has finitely many possibilities. Taking a subsequence where the homology class of $\Sigma_i$ are same, we obtain a contradiction to the condition (2) of Theorem \ref{Main-Theorem-3}.

  We've obtained a bound on the curvature. Theorem 4.7 of \cite{CheegerGromovTaylor} and Lemma 4.3 of \cite{Ruan} now provide a lower bound on the holomorphic radius.
  \end{proof}

  Let $D_0$ be the upper bound on the diameter of $U_N$ with respect to $\omega_0$. We are interested in the function $e(t_i)=\mathrm{tr}_{\omega_i}\omega_0=\mathrm{tr}_{\omega_0}\omega_i$ on $B_{\omega_i}(p_{Ni},10D_0)$. We start from a theorem:

  \begin{theorem}
  There exists a constant $C_2$ such that if
  $$\gamma^{-2}\int_{B_{\omega_i}(p,\gamma)}e(t_i)\le C_2$$
  for some ball $B_{\omega_i}(p,\gamma)\subset B_{\omega_i}(p_{Ni},10D_0)$, then
  $$\sup_{\sigma\in[0,\frac{2}{3}\gamma]}\sigma^2\sup_{B_{\omega_i}(p,\frac{2}{3}\gamma-\sigma)}e(t_i)
  \le\frac{1}{C_2}\gamma^{-2}\int_{B_{\omega_i}(p,\gamma)}e(t_i)\le1.$$
  \label{Ruan}
  \end{theorem}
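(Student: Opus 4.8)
The statement is a scale-invariant $\varepsilon$-regularity estimate for the energy density $e(t_i)=\mathrm{tr}_{\omega_i}\omega_0$, and I would prove it by the standard combination of a Chern--Lu differential inequality, a point-selection argument, and a blow-up/contradiction built on the compactness package already developed in this section. First, since $\omega_i^2=\omega_0^2$ and $\omega_0$ is Ricci-flat, $\omega_i$ is Ricci-flat as well, while $\omega_0$ has bounded bisectional curvature (ALH asymptotics plus a fixed compact part). The Chern--Lu inequality for the holomorphic identity map $(M,\omega_i)\to(M,\omega_0)$ then gives $\Delta_{\omega_i}\log e\ge -C_0 e$ on $M$, equivalently $\Delta_{\omega_i}e\ge -C_0 e^2$, with $C_0$ depending only on $\omega_0$. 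I would also record the elementary facts that $e=\mathrm{tr}_{\omega_i}\omega_0=\mathrm{tr}_{\omega_0}\omega_i\ge 2$ and that the volume forms of $\omega_i$ and $\omega_0$ coincide, so all the integrals in the statement are with respect to one fixed measure; and that on $B_{\omega_i}(p_{Ni},10D_0)$ the metric $\omega_i$ is volume non-collapsed, by the lower volume bound noted just after Theorem \ref{Curvature-bound} together with Bishop--Gromov for $\mathrm{Ric}\ge 0$, so every $\omega_i$-ball that occurs carries uniform doubling and Neumann--Poincar\'e constants.

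Next, the point-selection. Set $G:=\sup_{\sigma\in[0,\frac23\gamma]}\sigma^2\sup_{B_{\omega_i}(p,\frac23\gamma-\sigma)}e$; as a continuous function on a compact interval it is attained at some $\sigma_0$, with $G=\sigma_0^2 e(x_0)=:\sigma_0^2 m$ for a point $x_0\in\overline{B_{\omega_i}(p,\frac23\gamma-\sigma_0)}$. Replacing $\sigma_0$ by $\sigma_0/2$ in the supremum yields $e\le 4m$ on $B_{\omega_i}(x_0,\sigma_0/2)\subset B_{\omega_i}(p,\frac23\gamma)\subset B_{\omega_i}(p,\gamma)$. Rescaling $\hat\omega_i:=m\,\omega_i$ normalizes the picture: $\hat e:=m^{-1}e$ satisfies $\hat e(x_0)=1$, $\hat e\le 4$ on the $\hat\omega_i$-ball of radius $\sqrt G/2$ about $x_0$, still $\Delta_{\hat\omega_i}\log\hat e\ge -C_0\hat e\ge -4C_0$, and $\hat\omega_i$ is still Ricci-flat, non-collapsed, and (in real dimension $4$, scale-invariantly) has $\int|\mathrm{Rm}_{\hat\omega_i}|^2$ uniformly bounded.

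Then I would argue by contradiction. If no admissible $C_2$ exists, one gets a sequence of configurations with $\varepsilon_j:=\gamma_j^{-2}\int_{B(p_j,\gamma_j)}e_j\to 0$ while $G_j/\varepsilon_j\to\infty$. Applying the point-selection and the rescaling above (with a further unit-scale normalization), the compactness machinery already used in Theorem \ref{Curvature-bound} --- Anderson's $\varepsilon$-regularity from the $L^2$ curvature bound and non-collapsing, Cheeger--Gromov--Taylor, and Ruan's holomorphic-radius estimate --- produces a nontrivial pointed limit: a Ricci-flat K\"ahler, hence hyperk\"ahler, orbifold carrying a nonnegative function $\hat e_\infty$ with $\hat e_\infty(x_{0,\infty})=1$ and $\Delta\log\hat e_\infty\ge -C_0\hat e_\infty$. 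But the localized Morrey smallness forces $\int_B\hat e_\infty=0$ on a fixed ball containing $x_{0,\infty}$, hence $\hat e_\infty\equiv 0$ there, contradicting $\hat e_\infty(x_{0,\infty})=1$. Tracking constants through this argument yields the quantitative bound $G\le C_3^{-1}\gamma^{-2}\int_{B(p,\gamma)}e$, and the hypothesis $\gamma^{-2}\int e\le C_2:=C_3$ makes the right-hand side $\le 1$.

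\textbf{Main difficulty.} The delicate point is the blow-up: the relevant rescaling length (roughly $m^{-1/2}$, or $\gamma_j$, or $\sigma_{0,j}$) may collapse, so one must split into the cases ``$\sigma_{0,j}$ bounded below'', ``$\sigma_{0,j}\to 0$ with curvature concentrating at $x_{0,j}$'' --- where the homology argument of Theorem \ref{Curvature-bound} re-enters to exclude a bubble --- and ``$\gamma_j\to 0$'', and in each case ensure both that the limit space is genuinely nontrivial and that the vanishing Morrey mass passes to the limit on a ball still containing the selected point. Equivalently, a direct Moser iteration faces the same obstacle: making the mean-value constant uniform despite the quadratically nonlinear term $-C_0 e^2$, which is exactly what the rescaling to unit scale achieves and what the Ricci-flatness and the non-collapsing from Theorem \ref{Existence-of-V(N)} are needed to supply.
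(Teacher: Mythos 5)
Your overall framework (Chern--Lu inequality for the identity map, point-selection, non-collapsing) identifies the right ingredients, but the step on which your contradiction argument actually closes is missing, and it is not a deferrable technicality: it is the whole content of the proposition. After point-selection you set $G_j=\sigma_{0,j}^2m_j$ with $m_j=e_j(x_{0,j})$ and rescale by $m_j$; the ``localized Morrey smallness'' you need to kill in the limit lives on a fixed-size rescaled ball about $x_{0,j}$, i.e.\ on $B_{\omega_{i_j}}(x_{0,j},\rho m_j^{-1/2})$, and in rescaled terms it equals $m_j\int_{B_{\omega_{i_j}}(x_{0,j},\rho m_j^{-1/2})}e_j\,dV$. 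But the hypothesis controls the Morrey quantity only at the single center $p_j$ and the single top scale $\gamma_j$; the only soft comparison available is $m_j\int_{B(x_{0,j},\rho m_j^{-1/2})}e_j\le m_j\gamma_j^2\varepsilon_j$, and $m_j\gamma_j^2=G_j\gamma_j^2/\sigma_{0,j}^2\ge\tfrac94G_j$ is exactly the quantity your contradiction hypothesis makes large relative to $\varepsilon_j$; when $\sigma_{0,j}\ll\gamma_j$ this bound can even be unbounded. Since there is no monotonicity formula for $\rho^{-2}\int_{B(x,\rho)}e$ (the inequality $\Delta_{\omega_i}e\ge-C_0e^2$ is critical in real dimension four), the smallness does not propagate from scale $\gamma_j$ at $p_j$ to scale $m_j^{-1/2}$ at $x_{0,j}$ by compactness alone, so the claimed vanishing $\int_B\hat e_\infty=0$ does not follow and no contradiction is reached. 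This transfer of smallness across scales and centers --- which you explicitly list as the ``main difficulty'' and leave unresolved --- is precisely what a correct proof must supply, e.g.\ by the quantitative two-step scheme (point-picking plus a rescaled mean-value inequality to get a rough bound on $G$, then a second, now linear, Moser iteration to obtain the proportionality $G\le C_2^{-1}\gamma^{-2}\int_{B(p,\gamma)}e$ rather than merely $G\le1$), which is what Ruan's argument does.

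For comparison, the paper does not reprove the estimate at all: it invokes Proposition 2.1 of \cite{Ruan} (noting that after correcting errors there one only gets $\tfrac23\gamma$), and the only point it verifies is that Ruan's constant is uniform, which follows because Theorem \ref{Curvature-bound} already gives a uniform lower bound on the $C^{1,\alpha}$-holomorphic radius of $\omega_i$ on the balls in question and, in such a chart, Ricci-flatness bootstraps $C^{1,\alpha}$ control to bounds on all derivatives. For the same reason, most of your blow-up apparatus is off target: on $B_{\omega_i}(p_{Ni},10D_0)$ the curvature of $\omega_i$ and its holomorphic radius are already uniformly controlled, so the metrics cannot bubble or collapse and the homology argument of Theorem \ref{Curvature-bound} has no role here --- what may blow up is the scalar density $e(t_i)$, not the geometry --- while the ingredients you correctly record (Chern--Lu, $e=\mathrm{tr}_{\omega_i}\omega_0=\mathrm{tr}_{\omega_0}\omega_i$, non-collapsing via Theorem \ref{Existence-of-V(N)} and Bishop--Gromov) are exactly the hypotheses under which Ruan's proposition applies with uniform constants. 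Either cite it as the paper does, or carry out the quantitative iteration; the compactness contradiction as sketched does not close.
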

  \begin{proof}
  It's well known that in the $C^{1,\alpha}$-holomorphic radius, there are higher derivative bounds automatically. Thus, the constant in Proposition 2.1 of \cite{Ruan} is uniform. (There are several errors in \cite{Ruan}. After correcting them, we can only get ``$\frac{2}{3}\gamma$" instead of ``$\gamma$" in the statement)
  \end{proof}

  \begin{theorem}
  For each $i$, there exists a set $A_i\subset\{0,1,2...\}$ such that for all $N\not\in A_i$, $\sup_{B_{\omega_i}(p_{Ni},4D_0)}e(t_i)\le2.5$ and the number of elements in $A_i$ is bounded.
  \label{Pointwise-energy-bound}
  \end{theorem}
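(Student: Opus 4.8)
The plan is to take $A_i$ to be the set of $N$ at which the hypothesis of Theorem~\ref{Ruan} fails at $p_{Ni}$, and to bound $\#A_i$ by showing that each such failure forces a fixed positive amount of the energy controlled in Theorem~\ref{Energy-bound}. Two preliminary observations: since $\omega_i$ and $\omega_0$ are K\"ahler with $\omega_i^2=\omega_0^2$, the eigenvalues $\lambda_1,\lambda_2$ of $\omega_0$ with respect to $\omega_i$ satisfy $\lambda_1\lambda_2=1$ at every point, so $e(t_i)=\mathrm{tr}_{\omega_i}\omega_0=\lambda_1+\lambda_2\ge 2$ with equality exactly where $\omega_i=\omega_0$, and $\mathrm{tr}_{\omega_i}\omega_0=\mathrm{tr}_{\omega_0}\omega_i$; hence Theorem~\ref{Energy-bound} gives the uniform bound $\int_M(e(t_i)-2)\tfrac{\omega_0^2}{2}\le C$. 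Secondly, on $B_{\omega_i}(p_{Ni},10D_0)$ the metrics $\omega_i$ have uniformly bounded geometry by Theorem~\ref{Curvature-bound}: bounded curvature, a lower bound on the holomorphic radius, and (by Bishop--Gromov for $\mathrm{Ric}_{\omega_i}=0$ together with the volume lower bound on $V_{Ni}$ from Theorem~\ref{Existence-of-V(N)}) two-sided volume bounds on sub-balls; in particular $e(t_i)$ obeys uniform interior estimates there, of exactly the type packaged in Theorem~\ref{Ruan}.

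For the easy direction, fix a scale $\gamma\le 10D_0$ adapted to $D_0$ and set $A_i=\{N:\ \gamma^{-2}\int_{B_{\omega_i}(p_{Ni},\gamma)}e(t_i)>C_2\}$. If $N\notin A_i$, apply Theorem~\ref{Ruan} at $p=p_{Ni}$ with $\sigma=\tfrac23\gamma-4D_0$ (if $C_2$ forces a small scale, one instead applies Theorem~\ref{Ruan} at the points of a fixed-scale net covering $B_{\omega_i}(p_{Ni},4D_0)$); in either case its conclusion yields $\sup_{B_{\omega_i}(p_{Ni},4D_0)}e(t_i)\le 2.5$, which is the stated bound for the good $N$.

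It remains to bound $\#A_i$ uniformly in $i$. If $N\in A_i$ then $\int_{B_{\omega_i}(p_{Ni},\gamma)}e(t_i)\tfrac{\omega_i^2}{2}>C_2\gamma^2$, and subtracting the baseline $2\,\mathrm{Vol}_{\omega_i}(B_{\omega_i}(p_{Ni},\gamma))$ — bounded above by Bishop--Gromov — gives a definite lower bound on the local excess, $\int_{B_{\omega_i}(p_{Ni},10D_0)}(e(t_i)-2)\tfrac{\omega_i^2}{2}\ge\varepsilon_0>0$ with $\varepsilon_0$ uniform; alternatively, the bounded geometry above turns a point where $e(t_i)>2.5$ into a fixed-size ball on which $e(t_i)>2.25$, with the same conclusion. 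Next, the balls $\{B_{\omega_i}(p_{Ni},10D_0)\}_N$ have bounded overlap $K_0$ uniform in $i$: if a point lies in $K$ of them, the corresponding $p_{N_\ell i}$ are within $\omega_i$-distance $20D_0$ of one another, and by Theorem~\ref{Existence-of-V(N)} each $V_{N_\ell i}$ has $\omega_i$-diameter $\le C_1$, so the pairwise disjoint sets $V_{N_\ell i}$ — each of $\omega_i$-volume equal to its $\omega_0$-volume $\ge c_1>0$ — all lie in the single ball $B_{\omega_i}(p_{N_1 i},20D_0+C_1)$, whose volume is bounded by Bishop--Gromov, forcing $K\le K_0$. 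Finally $\varepsilon_0\,\#A_i\le K_0\int_M(e(t_i)-2)\tfrac{\omega_i^2}{2}=K_0\int_M(e(t_i)-2)\tfrac{\omega_0^2}{2}\le K_0 C$, so $\#A_i\le K_0 C/\varepsilon_0$ is bounded independently of $i$.

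The main obstacle is the bookkeeping of the geometric constants: one must choose the scale(s) at which Theorem~\ref{Ruan} is invoked so that its hypothesis is genuinely verifiable for the good $N$ while its failure for $N\in A_i$ really does account for a fixed positive fraction of $\int_M(e(t_i)-2)\tfrac{\omega_0^2}{2}$, and this reconciliation (together with the passage from a single point with large $e(t_i)$ to a definite amount of excess) hinges entirely on the uniform bounded geometry — non-collapsing, curvature bound, holomorphic-radius lower bound — supplied by Theorem~\ref{Curvature-bound} on the balls $B_{\omega_i}(p_{Ni},10D_0)$.
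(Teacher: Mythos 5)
Your bounded-overlap counting of the bad indices (disjoint $V_{Ni}$'s of definite volume inside a single ball of controlled volume, Bishop--Gromov, then summing the excess energy) is exactly the paper's first step and is fine. The genuine gap is in the claim that for $N\notin A_i$ the conclusion of Theorem \ref{Ruan} already gives $\sup_{B_{\omega_i}(p_{Ni},4D_0)}e(t_i)\le 2.5$. Theorem \ref{Ruan} is an $\epsilon$-regularity statement: its conclusion is $\sigma^2\sup_{B(p,\frac23\gamma-\sigma)}e(t_i)\le 1$, i.e.\ a crude bound $e(t_i)\le\sigma^{-2}$, and it cannot see the baseline value $2$. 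If you take $\gamma$ small (as forced by the hypothesis $\gamma^{-2}\int_{B(p,\gamma)}e(t_i)\le C_2$, since $\int_{B(p,\gamma)}e(t_i)\ge 2\,\mathrm{Vol}(B(p,\gamma))$ and $C_2$ is a fixed universal constant not at your disposal), the bound you obtain is of order $\gamma^{-2}$, far above $2.5$; if instead you insist on $\frac23\gamma-\sigma=4D_0$, then $\gamma>6D_0$ and the small-energy hypothesis is never verifiable because the baseline volume term alone is a fixed constant of order $\mathrm{Vol}(U_N)/D_0^2$. Your fallback remark (a point with $e(t_i)>2.5$ forces $e(t_i)>2.25$ on a definite ball) also needs a uniform modulus of continuity for $e(t_i)$, which at this stage would require exactly the second-order estimate being proved, so it is circular as stated.

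What is missing is the paper's second, refinement step. After using Theorem \ref{Ruan} only to get the crude bound $e(t_i)\le\frac94\gamma^{-2}$ on $B_{\omega_i}(p_{Ni},8D_0)$ (which also localizes this ball inside $B_{\omega_0}(p_{Ni},18\gamma^{-2}D_0)$), the paper invokes the Chern--Lu inequality $-\Delta_{\omega_i}e(t_i)\le C|\mathrm{Rm}(\omega_0)|\,e(t_i)^2\le C|\mathrm{Rm}(\omega_0)|$, puts into $A_{i,\epsilon}$ not only the indices with local excess $\int_{B_{\omega_i}(p_{Ni},10D_0)}(e(t_i)-2)>\epsilon$ but also the finitely many $N$ with $\sup_{B_{\omega_0}(p_{Ni},18\gamma^{-2}D_0)}|\mathrm{Rm}(\omega_0)|>\epsilon$ (here the ALH curvature decay of the fixed background $\omega_0$ is essential, and your proposal never uses it), and then applies the local maximum principle (Theorem 9.20 of Gilbarg--Trudinger) to the subsolution $e(t_i)-2$: smallness of both its $L^1$ norm and of $-\Delta_{\omega_i}e(t_i)$ yields $\sup_{B_{\omega_i}(p_{Ni},4D_0)}(e(t_i)-2)\le C\epsilon\le\frac12$ after choosing $\epsilon$. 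Without this step the specific bound $2.5$, which is what the subsequent arguments (e.g.\ Lemma \ref{Diameter-bound}) rely on, does not follow.
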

  \begin{proof}
  For any point $p\in M$, the number of $N$ such that $p\in B_{\omega_i}(p_{Ni},10D_0)$ is bounded. Actually, by Theorem \ref{Existence-of-V(N)}, for all such $N$, the set $V_{Ni}$ is contained in $B_{\omega_i}(p,10D_0+C_1)$. The volume of $B_{\omega_i}(p,10D_0+C_1)$ has an upper bound by volume comparison, while the volumes of the disjoint sets $V_{Ni}$ have a lower bound.

  Therefore, by Theorem \ref{Energy-bound} $$\sum_{N}\int_{B_{\omega_i}(p_{Ni},10D_0)}(e(t_i)-2)\le C.$$

  Let $\gamma$ be a constant smaller than $D_0$ such that $\gamma^2\le\frac{C_2}{2\pi^2}$. Then by the volume comparison theorem, the volume of $B(R)$ on a Ricci flat 4-manifold is bounded by $\frac{\pi^2}{2}R^4$. So
  $$\int_{B_{\omega_i}(p,\gamma)}2=2\mathrm{Vol}(B_{\omega_i}(p,\gamma))\le\frac{C_2}{2}\gamma^2.$$
  Therefore, as long as $$\int_{B_{\omega_i}(p_{Ni},10D_0)}(e(t_i)-2)\le\frac{C_2}{2}\gamma^2,$$ we can get $$\int_{B_{\omega_i}(p,\gamma)}e(t_i)\le C_2\gamma^2$$ for all $p\in B_{\omega_i}(p_{Ni},8D_0)$.
  So $e(t_i)=\mathrm{tr}_{\omega_i}\omega_0\le\frac{9}{4}\gamma^{-2}$ in $B_{\omega_i}(p_{Ni},8D_0)$ by Theorem \ref{Ruan}. In particular, $B_{\omega_i}(p_{Ni},8D_0)\subset B_{\omega_0}(p_{Ni},18\gamma^{-2}D_0)$.

  For any $\epsilon\le\frac{C_2}{2}\gamma^2$, let $A_{i,\epsilon}$ denote the set of $N$ such that the integral $\int_{B_{\omega_i}(p_{Ni},10D_0)}(e(t_i)-2)>\epsilon$ or $\sup_{B_{\omega_0}(p_{Ni},18\gamma^{-2}D_0)}|Rm(\omega_0)|>\epsilon$. Then the number of elements in $A_{i,\epsilon}$ is bounded. For all $N\not\in A_{i,\epsilon}$, it's well known \cite{Ruan} that $$-\Delta_{\omega_i}e(t_i)\le C |Rm(\omega_0)|e(t_i)^2\le C|Rm(\omega_0)|$$ in $B_{\omega_i}(p_{Ni},8D_0)$.  So both $\sup_{B_{\omega_i}(p_{Ni},8D_0)}-\Delta_{\omega_i}e(t_i)$ and $\int_{B_{\omega_i}(p_{Ni},8D_0)}(e(t_i)-2)$ are bounded by $C\epsilon$. By Theorem 9.20 of \cite{GilbargTrudinger}, $\sup_{B_{\omega_i}(p_{Ni},4D_0)}(e(t_i)-2)\le C\epsilon$ for all $N\not\in A_{i,\epsilon}$. After a suitable choice of $\epsilon$, we can make it smaller than $1/2$.
  \end{proof}

  \begin{lemma}
  There exists a constant $R$ such that $M\subset\cup_{N\not\in A_i}\overline{B_{\omega_i}(p_{Ni},R)}$.
  \label{Diameter-bound}
  \end{lemma}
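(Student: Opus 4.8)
The plan is to cover $M$ by the blocks $U_N$, $N\ge 0$ (which satisfy $\bigcup_N U_N=M$), to show that every ``good'' block, i.e.\ every $U_N$ with $N\notin A_i$, lies inside a single controlled $\omega_i$-ball around $p_{Ni}$, and then to absorb the finitely many remaining blocks.

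First I would treat the good blocks. Fix $N\notin A_i$. By Theorem~\ref{Pointwise-energy-bound}, $e(t_i)=\mathrm{tr}_{\omega_i}\omega_0\le\tfrac52$ on $B_{\omega_i}(p_{Ni},4D_0)$. Since $\omega_i^2=\omega_0^2$ and $M$ has complex dimension $2$, at every point the eigenvalues $\lambda_1,\lambda_2$ of $\omega_0$ relative to $\omega_i$ satisfy $\lambda_1\lambda_2=1$ and $\lambda_1+\lambda_2\le\tfrac52$, hence $\tfrac25\le\lambda_j\le\tfrac52$; therefore $\tfrac25\,\omega_i\le\omega_0\le\tfrac52\,\omega_i$ on that ball, and in particular any curve contained in $B_{\omega_i}(p_{Ni},4D_0)$ has $\omega_i$-length at most $\sqrt{5/2}$ times its $\omega_0$-length. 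Because $U_N$ is connected with $\mathrm{diam}_{\omega_0}(U_N)\le D_0$ and contains $p_{Ni}$, for any $q\in U_N$ I take an $\omega_0$-path from $p_{Ni}$ to $q$ inside $U_N$ of $\omega_0$-length $\le D_0$; the standard open-closed continuity argument (the portion of the path within $\omega_i$-distance $2D_0$ of $p_{Ni}$ stays, by the length comparison, at $\omega_i$-distance $\le\sqrt{5/2}\,D_0<2D_0$, hence never reaches the boundary of $B_{\omega_i}(p_{Ni},2D_0)$) shows that the whole path, and thus $q$, lies in $\overline{B_{\omega_i}(p_{Ni},2D_0)}$. Thus $U_N\subset\overline{B_{\omega_i}(p_{Ni},2D_0)}$ for every $N\notin A_i$.

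It remains to cover $W:=\bigcup_{N\in A_i}U_N$, a compact set since $A_i$ is finite and its cardinality is bounded uniformly in $i$. The bad indices split into at most $|A_i|$ runs of consecutive integers, each of length at most $|A_i|$, so every bad block is joined to a good block by a connected chain $U_N\cup\cdots\cup U_{N'}$ with $N'\notin A_i$ and at most $|A_i|+1$ terms, of $\omega_0$-diameter at most $(|A_i|+2)D_0$. Applying the first step to two neighbouring good blocks already gives $d_{\omega_i}(p_{Mi},p_{(M+1)i})\le CD_0$ whenever $M,M+1\notin A_i$. To propagate control across a bad run I would use that $\mathrm{Vol}_{\omega_i}(U_M)=\mathrm{Vol}_{\omega_0}(U_M)\le C$ is uniformly bounded, that by Theorem~\ref{Existence-of-V(N)} each $U_M$ (bad or good) still contains the fat piece $V_{Mi}$ with $\mathrm{diam}_{\omega_i}(V_{Mi})\le C_1$ and $\mathrm{Vol}_{\omega_i}(V_{Mi})\ge C$, and that by Theorem~\ref{Curvature-bound} the $\omega_i$-curvature is bounded on every fixed $\omega_i$-ball about each $p_{Mi}$; a Cheeger-Gromov-type compactness/contradiction argument then forces $\mathrm{diam}_{\omega_i}(U_M)\le C$ uniformly even for bad $M$ (an unbounded diameter would produce, after passing to a limit, a Ricci-flat region of bounded volume but infinite diameter that is non-collapsed by the $V_{Mi}$, which is impossible). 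Chaining these bounds along each run and setting $R$ equal to a fixed multiple of $(|A_i|+2)D_0$ gives $W\subset\bigcup_{N\notin A_i}\overline{B_{\omega_i}(p_{Ni},R)}$, which together with the first step proves the lemma.

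The main obstacle is precisely the bad blocks $U_N$ with $N\in A_i$: there $e(t_i)$ is a priori uncontrolled, so $\omega_i$ and $\omega_0$ need not be comparable and the clean length-comparison argument of the first step is unavailable. Everything hinges on replacing that comparison by the combination of the uniform volume bound $\mathrm{Vol}_{\omega_i}(U_N)\le C$, the non-collapsing and bounded $\omega_i$-diameter of the fat piece $V_{Ni}$, and the uniform curvature bound of Theorem~\ref{Curvature-bound}, in order to obtain a bound on the $\omega_i$-diameter of the bad blocks that is uniform in $i$; making that compactness argument rigorous is where the real work of the proof lies.
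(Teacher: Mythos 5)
Your first step (good blocks) is exactly the paper's: the bound $e(t_i)\le 5/2$ together with $\omega_i^2=\omega_0^2$ gives uniform equivalence of $\omega_0$ and $\omega_i$ on $B_{\omega_i}(p_{Ni},4D_0)$, hence $U_N\subset \overline{B_{\omega_i}(p_{Ni},CD_0)}$ for every $N\notin A_i$. The problem is your treatment of the bad region, and there the gap is genuine. Your key claim is that each bad block has uniformly bounded $\omega_i$-diameter, proved by a Cheeger--Gromov compactness/contradiction argument ending in ``a Ricci-flat region of bounded volume but infinite diameter that is non-collapsed by the $V_{Mi}$, which is impossible.'' As stated this is not an impossibility: the non-collapsing furnished by $V_{Mi}$ is located only near $p_{Mi}$, while in the contradiction scenario the relevant points are far (in $\omega_i$) from $p_{Mi}$ and from \emph{every} $p_{Ni}$; there Theorem~\ref{Curvature-bound} gives no curvature bound (it only controls fixed-radius $\omega_i$-balls about the $p_{Ni}$), no volume lower bound is available, and so no smooth pointed limit can be extracted. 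Even granting a limit, ``finite volume + infinite diameter + non-collapsed at one point'' is only contradictory for a \emph{complete} Ricci-flat manifold (via linear volume growth); for an incomplete region cut out of $M$ it proves nothing, and completeness of the limit of the bad region is precisely what you cannot assert. So the step you yourself flag as ``where the real work lies'' is the missing ingredient, not a routine compactness argument.

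The paper closes this gap without any diameter bound on individual bad blocks and without smooth limits, by a purely metric-measure argument (after Schoen--Yau Theorem I.4.1, as in Ruan--Zhang and Tosatti): let $R=\sup_{q\in U_{A_i}}\inf_{N\notin A_i}d_{\omega_i}(p_{Ni},q)$ be attained at $q_i$, $N_i$. If $R>10D_0$, then by the good-block containment the ball $B_{\omega_i}(q_i,R-10D_0)$ meets no good block, hence lies in $U_{A_i}$ and has $\omega_i$-volume $\le C$ (the $\omega_i$- and $\omega_0$-volumes agree and $|A_i|$ is bounded), while the good block $U_{N_i}$ sits in the annulus $B_{\omega_i}(q_i,R+10D_0)\setminus B_{\omega_i}(q_i,R-10D_0)$ and has volume bounded below. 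Bishop--Gromov relative volume comparison on the Ricci-flat $(M,\omega_i)$ bounds the annulus volume by a multiple of $\bigl[\bigl(\tfrac{R+10D_0}{R-10D_0}\bigr)^4-1\bigr]\cdot C$, which tends to $0$ as $R\to\infty$, forcing a uniform bound on $R$. If you want to salvage your route, you should replace your compactness sketch by exactly this annulus comparison at the extremal point; your run-by-run chaining is then unnecessary.
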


  \begin{proof}
  For all $N\not\in A_i$, $\frac{1}{2}\omega_i\le\omega_0\le2\omega_i$ in $B_{\omega_i}(p_{Ni},4D_0)$ by Theorem \ref{Pointwise-energy-bound}. In particular, $$U_{N}\subset\overline{B_{\omega_0}(p_{Ni},D_0)}\subset B_{\omega_i}(p_{Ni},4D_0).$$

  Let $U_{A_i}=\cup_{N\in A_i}U_N$. Suppose $R=\sup_{q\in U_{A_i}}\inf_{N\not\in A_i}d_{\omega_i}(p_{Ni},q)$
  is achieved by $q_i$ and $N_i$. Then we will use the argument similar to Theorem 3.1 of \cite{RuanZhang} and Theorem 3.1 of \cite{Tosatti} proved by applying Theorem I.4.1 of \cite{SchoenYau}. Actually, if $R>10D_0$, by Theorem \ref{Pointwise-energy-bound}, it's easy to see that $B_{\omega_i}(q_i,R-10D_0)\subset U_{A_i}$ and $U_{N_i}\subset B_{\omega_i}(q_i,R+10D_0)\setminus B_{\omega_i}(q_i,R-10D_0)$. Since the volume of $U_{A_i}$ is bounded from above and the volume of $U_{N_i}$ is bounded from below, it's easy to get a bound on $R$ by the Bishop-Gromov volume comparison theorem.
  \end{proof}

  \begin{theorem}
  $e(t_i)=\mathrm{tr}_{\omega_0}\omega_i=\mathrm{tr}_{\omega_i}\omega_0$ is uniformly bounded on $M$.
  \end{theorem}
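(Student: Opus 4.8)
The plan is to glue together a pointwise bound valid outside a fixed compact set with a maximum–principle estimate on the compact part, using the uniform local geometry already established. First I would observe that the exceptional index set $A_i$ of Theorem~\ref{Pointwise-energy-bound} lies in a fixed finite range $\{0,\dots,N_0\}$ independent of $i$: since $\alpha_{t_i}-\omega_0=\alpha_{t_i}-\alpha_0$ decays like $e^{-\lambda_1 r}$ uniformly in $i$ and $\omega_i-\omega_0=(\alpha_{t_i}-\omega_0)+i\partial\bar\partial\phi_i$, the density $e(t_i)-2=\mathrm{tr}_{\omega_i}(\omega_i-\omega_0)$ has uniformly small integral over $\{r\ge N_0\}$ once $N_0$ is large, and $|Rm(\omega_0)|\to 0$ at infinity as well. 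Hence every $N\ge N_0$ lies outside $A_i$, so by Theorem~\ref{Pointwise-energy-bound} together with the inclusion $U_N\subset B_{\omega_i}(p_{Ni},4D_0)$ (as in the proof of Lemma~\ref{Diameter-bound}) we get $e(t_i)\le 2.5$ on $M\setminus K$, where $K=\{r\le N_0+2\}$ is a fixed compact set. Thus the whole region where $e(t_i)$ could be large sits inside $K$.

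Next I would set up the estimate on a fixed neighbourhood $K'$ of $K$. By Theorem~\ref{Curvature-bound}, on the finitely many balls $B_{\omega_i}(p_{Ni},R)$ ($N\le N_0$) needed to cover $K'$ — finitely many because $K'$ has uniformly bounded $\omega_i$-volume, $\omega_i^2=\omega_0^2$ being the fixed volume form — the $\omega_i$-curvature is uniformly bounded and the $C^{1,\alpha}$-holomorphic radius is uniformly bounded below. Since $\omega_i$ solves $\omega_i^2=\omega_0^2$ with $\omega_0$ (the Biquard–Minerbe ALH instanton) Ricci-flat, $\omega_i$ is Ricci-flat, so the Chern–Lu/Schwarz inequality for the holomorphic identity map $(M,I,\omega_i)\to(M,I,\omega_0)$ gives $\Delta_{\omega_i}\log e(t_i)\ge -B\,e(t_i)$ on $K'$, with $B$ a fixed bound on the holomorphic bisectional curvature of $\omega_0$.

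Then I would run Yau's second-order estimate. Writing $\omega_i-\omega_0=\eta_i+i\partial\bar\partial\phi_i$ with $\eta_i:=\alpha_{t_i}-\omega_0$ and $\phi_i$ uniformly bounded (it decays), one has $\Delta_{\omega_i}\phi_i=2-e(t_i)-\mathrm{tr}_{\omega_i}\eta_i$ and $|\mathrm{tr}_{\omega_i}\eta_i|\le C\,|\eta_i|_{\omega_0}\,e(t_i)$. Taking each segment of the continuity path short enough that $\sup_M|\eta_i|_{\omega_0}$ is as small as we wish — which is harmless — we obtain, for a suitable fixed $A=A(B)$, that $\Delta_{\omega_i}(\log e(t_i)-A\phi_i)\ge e(t_i)-C$ on $K'$. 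Applying the maximum principle to $\log e(t_i)-A\phi_i$ on $\overline{K'}$: at an interior maximum this forces $e(t_i)\le 2C$, while on $\partial K'\subset M\setminus K$ we already have $e(t_i)\le 2.5$; since $\phi_i$ is uniformly bounded, this yields $\sup_{K'}e(t_i)\le C$. Combined with the exterior bound, $e(t_i)$ is uniformly bounded on $M$.

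The step I expect to be the main obstacle is this last one: $\omega_i-\omega_0$ is not $i\partial\bar\partial$-exact on $K$ — its class pairs nontrivially with the resolution curves $\Sigma_\alpha$, since the $I$-Kähler class is precisely what is varying — so there is no global Kähler potential, and one must control the error term $\mathrm{tr}_{\omega_i}\eta_i$ produced by a bounded representative of the class; this is what makes it convenient to work along short segments (or, equivalently, to absorb a small multiple of $e(t_i)$, affordable when $|\eta_i|_{\omega_0}$ is small). An alternative to the whole argument on $K'$ is a contradiction/compactness proof: if $\sup_{K}e(t_i)\to\infty$, the uniform geometry bounds force $(M,\omega_i,I)$ to subconverge in $C^\infty_{\mathrm{loc}}$ near $K$ to a smooth Ricci-flat Kähler limit on which $\mathrm{tr}\,\omega_0$ is finite and continuous, contradicting $e(t_i)\to\infty$; but reconciling that convergence with the fixed background $\omega_0$ requires the same bookkeeping of diffeomorphisms.
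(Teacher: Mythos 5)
Your proposal has two genuine gaps, and both occur at places where the paper's actual argument is structured quite differently. First, the opening claim that the exceptional set $A_i$ lies in a fixed range $\{0,\dots,N_0\}$ independent of $i$ does not follow from the decay of $\alpha_{t_i}-\omega_0$. The density is $e(t_i)-2=\mathrm{tr}_{\omega_0}\bigl((\alpha_{t_i}-\omega_0)+i\partial\bar\partial\phi_i\bigr)$, and while the first piece is uniformly exponentially small, the second piece contributes, on a region $\{r\ge N_0\}$, a boundary term of the form $\int_{\{r=N_0\}}\omega_0\wedge \mathrm{d}^c\phi_i$ which is not controlled uniformly in $i$ (each $\phi_i$ decays, but with no uniformity in $i$ at this stage). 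So all you know is that the nonnegative energy $\int_M(e(t_i)-2)$ is bounded by $C$, not that it is small outside a fixed compact set; a bounded number of bad annuli could sit at arbitrarily large $r$. This is exactly why the paper proves only that the cardinality of $A_i$ is bounded (Theorem \ref{Pointwise-energy-bound}) and then needs Lemma \ref{Diameter-bound} (via Bishop--Gromov) to show every point of $M$, wherever the bad annuli are, lies within bounded $\omega_i$-distance of a good center. Second, your maximum-principle step on $K'$ uses ``$\phi_i$ uniformly bounded (it decays)''; uniform boundedness of $\phi_i$ is precisely what is established later, in the proof that $I$ is closed, by Lockhart--McOwen theory plus Moser iteration, and that argument takes the uniform bound on $e(t_i)$ as input. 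Using it here is circular. The auxiliary device of shortening the continuity segments so that $\sup_M|\eta_i|$ is small is also not ``harmless'': it changes the continuity scheme into one requiring a uniform step length, with a new background metric (and new constants $B$, holomorphic radius, etc.) at the start of each subsegment, which is again the kind of uniform control the estimate is supposed to produce.

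For comparison, the paper avoids both issues by never localizing the bad set and never invoking a potential or Chern--Lu inequality: it combines Theorem \ref{Curvature-bound} with Lemma \ref{Diameter-bound} to get a uniform lower bound on the $\omega_i$-holomorphic radius everywhere on $M$, and then applies the $\varepsilon$-regularity statement (Theorem \ref{Ruan}) with the reference metric $\omega_j$ for large $j$ rather than $\omega_0$, because by Theorem \ref{Energy-bound} $\int_M(\mathrm{tr}_{\omega_j}\omega_i-2)\to 0$ as $i,j\to\infty$, so the smallness hypothesis holds on every ball with no exceptional set; fixing one large $j$ and using $C_j^{-1}\omega_0\le\omega_j\le C_j\omega_0$ converts the bound on $\mathrm{tr}_{\omega_j}\omega_i$ into the desired bound on $\mathrm{tr}_{\omega_0}\omega_i$. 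If you want to salvage your approach, you would need either a uniform a priori bound on $\phi_i$ (or on $\mathrm{d}^c\phi_i$ along a fixed hypersurface) obtained independently of the trace bound, or some substitute for the $\omega_j$-reference trick; as written, both the localization of the bad set and the maximum-principle step fail.
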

  \begin{proof}
  By Theorem \ref{Curvature-bound} and Lemma \ref{Diameter-bound}, the $\omega_i$-holomorphic radius is bounded from below. So the constant in Theorem \ref{Ruan} is uniform if we replace $\omega_0$ by $\omega_j$ in the statement of Theorem \ref{Ruan}. By Theorem \ref{Energy-bound}, $\int_M(\mathrm{tr}_{\omega_j}\omega_i-2)\frac{\omega_j^2}{2}\rightarrow 0$ as $i,j\rightarrow\infty$. So for large enough $i$ and $j$, $\mathrm{tr}_{\omega_j}\omega_i$ is uniformly bounded on $M$. Fix $j$ and let $i$ go to infinity. Since $C_j^{-1}\omega_0\le\omega_j\le C_j\omega_0$, the bound on $\mathrm{tr}_{\omega_j}\omega_i$ automatically implies a bound on $\mathrm{tr}_{\omega_0}\omega_i$.
  \end{proof}

  Now we are ready to use the arguments in \cite{HaskinsHeinNordstrom} to prove Theorem \ref{Main-Theorem-3}. Let $N$ be a large constant such that when $r\ge N$, $\frac{1}{2}\omega_0\le\alpha_i\le2\omega_0$. We start from a theorem which can be easily deduced from Proposition 4.21 of \cite{HaskinsHeinNordstrom}:

  \begin{theorem}
  Let $w=\frac{e^{-\delta r}}{\int_Me^{-\delta r}}$ be a weight function. Define the weighted norm $||u||_{L^p_w}$ by
  $||u||_{L^p_w}=(\int|u|^p w)^{1/p}$, then for all $u\in C^{\infty}_0$,
  $$||u||_{L^4_w(\{r\ge N\})}\le C||\nabla u||_{L^2(\{r\ge N\})}+C||u||_{L^4(\{N\le r\le N+1\})}.$$
  It's easy to see that for all $1\le p\le q\le\infty$, $||u||_{L^p_w}\le ||u||_{L^q_w}$ by H\"older's inequality.
  \label{Weighted-Sobolev-embedding}
  \end{theorem}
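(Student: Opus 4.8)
The plan is to localize the inequality to the ALH end and then combine a one-dimensional weighted Hardy--Poincar\'e inequality in the cylinder variable $r$ with the critical four-dimensional Sobolev embedding. First I would reduce to functions supported near infinity. Given $u\in C_0^\infty(M)$, pick a cutoff $\eta$ with $\eta\equiv 1$ near infinity, $\eta\equiv 0$ on $\{r\le N\}$, and $\mathrm{d}\eta$ supported in $S_N:=\{N\le r\le N+1\}$ with $|\mathrm{d}\eta|$ bounded. Then $\eta u$ is supported in $\{r\ge N\}$, and since $w$ is bounded on $S_N$ and, by H\"older, $||u||_{L^2(S_N)}\le C||u||_{L^4(S_N)}$, one gets
$$||u||_{L^4_w(\{r\ge N\})}\le ||\eta u||_{L^4_w}+C||u||_{L^4(S_N)},\qquad ||\nabla(\eta u)||_{L^2}\le ||\nabla u||_{L^2(\{r\ge N\})}+C||u||_{L^4(S_N)},$$
so it is enough to prove the asserted estimate for $u$ supported in $\{r\ge N\}$. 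By Definition \ref{ALH-definition}, on $\{r\ge N\}$ the metric is $e^{-\tau r}$-close in every $C^m$ to the exact product $\mathrm{d}r^2+g_{\mathbb{T}^3}$, so any integral inequality I verify on the exact product holds on $M$ with constants changed only by a factor close to $1$ once $N$ is large; hence I may assume the metric on $\{r\ge N\}$ is the product.

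The core step is the weighted $L^2$ bound
$$||u||_{L^2_{w_\beta}(\{r\ge N\})}\le C_\beta\big(||\nabla u||_{L^2(\{r\ge N\})}+||u||_{L^4(S_N)}\big),\qquad w_\beta=\frac{e^{-\beta r}}{\int_M e^{-\beta r}},$$
for any fixed $\beta>0$ (I will use $\beta=\delta$ and $\beta=\delta/2$). For fixed $\theta\in\mathbb{T}^3$, integrating $\int_N^\infty|u|^2e^{-\beta r}\,\mathrm{d}r$ by parts in $r$ produces $\tfrac1\beta|u(N,\theta)|^2e^{-\beta N}+\tfrac2\beta\int_N^\infty u\,\partial_r u\,e^{-\beta r}\,\mathrm{d}r$ (the boundary term at $\infty$ vanishing as $u$ is compactly supported); Cauchy--Schwarz and absorption then give $\int_N^\infty|u|^2e^{-\beta r}\,\mathrm{d}r\le\tfrac2\beta|u(N,\theta)|^2e^{-\beta N}+\tfrac{4}{\beta^2}\int_N^\infty|\partial_r u|^2e^{-\beta r}\,\mathrm{d}r$. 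Integrating over $\mathbb{T}^3$ and using $e^{-\beta r}\le1$, $|\partial_r u|\le|\nabla u|$, this bounds the left-hand side by $C_\beta(||u||_{L^2(\{r=N\})}^2+||\nabla u||_{L^2(\{r\ge N\})}^2)$, and the trace inequality on the slab $S_N$ together with H\"older gives $||u||_{L^2(\{r=N\})}\le C||u||_{W^{1,2}(S_N)}\le C||\nabla u||_{L^2(\{r\ge N\})}+C||u||_{L^4(S_N)}$, which is the claimed bound.

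To upgrade from $L^2_w$ to $L^4_w$, set $v=e^{-\delta r/4}u$, so that $||v||_{L^4(M)}^4=\big(\int_M e^{-\delta r}\big)\,||u||_{L^4_w}^4$ and $|\nabla v|\le e^{-\delta r/4}\big(|\nabla u|+C|u|\big)$. Since $M$ is a complete four-manifold of bounded geometry (bounded curvature and injectivity radius bounded below, immediate from completeness and the ALH structure of the end), the critical Sobolev embedding $W^{1,2}(M)\hookrightarrow L^4(M)$ holds, i.e. $||v||_{L^4}\le C(||\nabla v||_{L^2}+||v||_{L^2})$. Here $||\nabla v||_{L^2}^2+||v||_{L^2}^2\le C\int_M e^{-\delta r/2}(|\nabla u|^2+|u|^2)\le C||\nabla u||_{L^2(\{r\ge N\})}^2+C||u||_{L^2_{w_{\delta/2}}(\{r\ge N\})}^2$, and feeding in the weighted $L^2$ bound with $\beta=\delta/2$ closes the argument. (Equivalently, one may simply quote Proposition 4.21 of \cite{HaskinsHeinNordstrom}, of which this is a special case, and change notation.) The point to keep in mind---and the reason the statement carries both an exponential weight and a slice term---is that on a cylinder the plain four-dimensional inequality $||u||_{L^4}\le C||\nabla u||_{L^2}$ is \emph{false}, because balls have volume growing only linearly in the radius: the weight $w$ restores a finite reference measure, while the $||u||_{L^4(S_N)}$ term absorbs the part of $u$ supported on the cross-section. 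So the only genuine care needed is making all constants uniform, which is immediate from the bounded geometry of the ALH end.
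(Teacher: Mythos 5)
Your argument is correct, but it is not the route the paper takes: the paper offers no proof at all beyond declaring the statement ``easily deduced from Proposition 4.21 of \cite{HaskinsHeinNordstrom}'', which is the weighted Sobolev inequality for exactly this kind of asymptotically cylindrical setting (you note this option only parenthetically). What you supply instead is a self-contained derivation: cut off at the slab $S_N$, prove the weighted $L^2$ Hardy--Poincar\'e bound by the one-dimensional integration by parts in $r$ on the product end (with the trace term on $\{r=N\}$ absorbed via $W^{1,2}(S_N)$ and H\"older), and then upgrade to $L^4_w$ by substituting $v=e^{-\delta r/4}u$ and invoking the critical embedding $W^{1,2}\hookrightarrow L^4$ on a complete four-manifold of bounded geometry, closing the loop with the $\beta=\delta/2$ weighted $L^2$ bound. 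All the steps check out --- the integration by parts constants, the reduction to the exact product metric (only pointwise metric comparability is used), and the bounded-geometry hypotheses (Ricci-flat, curvature decay, injectivity radius bounded below on the ALH end) are all legitimate --- and your closing remark correctly identifies why both the weight and the slab term are indispensable on a cylindrical end. What your version buys is transparency and independence from \cite{HaskinsHeinNordstrom}; what the paper's citation buys is brevity and a statement already formulated for asymptotically cylindrical (not exactly cylindrical) metrics, which you instead handle by the closeness argument in your first paragraph. The only cosmetic redundancy is that after reducing to $u$ supported in $\{r\ge N\}$ the boundary term $|u(N,\theta)|^2$ in your Hardy inequality vanishes, so you carry the trace estimate where none is needed; this is harmless.
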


  \begin{theorem}
  $I$ is closed.
  \end{theorem}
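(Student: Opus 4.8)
The plan is to push the continuity method to the end: given $t_i\in I$ with $t_i\to T$, I must produce $\phi_T$ with $\omega_T:=\alpha_T+i\partial\bar\partial\phi_T>0$, $\omega_T^2=\omega_0^2$, and $|\nabla^m_{\omega_0}\phi_T|=O(e^{-\lambda_1 r})$ for all $m$. By the theorem just proved, $e(t_i)=\mathrm{tr}_{\omega_0}\omega_i=\mathrm{tr}_{\omega_i}\omega_0$ is uniformly bounded on $M$; since $\omega_i>0$ and $\omega_i^2=\omega_0^2$, the two eigenvalues of $\omega_i$ relative to $\omega_0$ multiply to $1$ and sum to $e(t_i)\le C$, so $C^{-1}\omega_0\le\omega_i\le C\omega_0$ uniformly in $i$. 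The only missing ingredient is therefore a uniform bound on the potentials $\phi_i$, together with their exponential decay.

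Next I would establish a uniform estimate $|\phi_i|\le Ce^{-\delta r}$ on $M$ (and, by interior Schauder, $|\nabla^m_{\omega_0}\phi_i|\le C_m e^{-\delta r}$). On the compact region $\{r\le N\}$ this is standard interior elliptic theory for the complex Monge--Amp\`ere equation now that the metrics are uniformly equivalent, the additive normalisation of $\phi_i$ being fixed by its known decay at infinity. On the end $\{r\ge N\}$, where $\tfrac12\omega_0\le\alpha_i\le2\omega_0$ and both $\omega_0$ and $\alpha_i$ are exponentially close to the flat ALH model, I would write the equation as a Monge--Amp\`ere equation with background $\alpha_i$ and run a Moser iteration based on the weighted Sobolev inequality of Theorem \ref{Weighted-Sobolev-embedding}: the energy bound $\int_M(\mathrm{tr}_{\omega_0}\omega_i-2)\tfrac{\omega_0^2}{2}\le C$ of Theorem \ref{Energy-bound} controls the iteration, while the boundary term on $\{N\le r\le N+1\}$ is absorbed by the compact-part estimate; a maximum-principle analysis of the linearised equation on the end---using that $r$ is pluriharmonic on the flat model, which is immediate from $\mathrm{d}r=I^*\mathrm{d}\theta^1$---then pins the exponential decay rate. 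I expect this uniform $C^0$-with-decay estimate to be the main obstacle, precisely because along the cylindrical end the ordinary Sobolev constant degenerates, so Yau's compact $C^0$ argument cannot be applied verbatim and one is forced into the weighted inequality.

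With $\|\phi_i\|_{C^0}$ and $e(t_i)$ uniformly bounded, the Evans--Krylov estimate for $(\alpha_i+i\partial\bar\partial\phi_i)^2=\omega_0^2$ and Schauder bootstrapping give uniform $C^k_{\mathrm{loc}}$ bounds for every $k$, and combined with the weighted decay these become uniform weighted bounds. Passing to a subsequence, $\phi_i\to\phi_T$ in $C^\infty_{\mathrm{loc}}$ with the same bounds, so $\omega_T=\alpha_T+i\partial\bar\partial\phi_T$ satisfies $\omega_T^2=\omega_0^2$, $\omega_T\ge C^{-1}\omega_0>0$, and $|\nabla^m_{\omega_0}\phi_T|=O(e^{-\delta r})$. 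It remains to upgrade the rate to $O(e^{-\lambda_1 r})$: since $\omega_T$ and $\alpha_T$ are both ALH with the same flat model, $i\partial\bar\partial\phi_T$ is asymptotically harmonic with exponentially small error, so applying Melrose's theory \cite{Melrose} to the Laplacian on $[R,\infty)\times\mathbb{T}^3$---whose first nonzero indicial root is exactly $\lambda_1=2\pi\min_{\lambda\in\Lambda^*\setminus\{0\}}|\lambda|$---and using $\phi_T\to0$ forces $|\nabla^m_{\omega_0}\phi_T|=O(e^{-\lambda_1 r})$. Hence $T\in I$, so $I$ is closed. Together with the openness of $I$ and $0\in I$, this gives $1\in I$, which is the existence statement in Theorem \ref{Main-Theorem-3}.
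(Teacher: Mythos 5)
Your overall route is the same as the paper's: use the uniform bound on $\mathrm{tr}_{\omega_0}\omega_i$ to get uniform equivalence of the metrics, prove a uniform potential estimate (compact part by elliptic theory, end by Moser iteration with the weighted Sobolev inequality of Theorem \ref{Weighted-Sobolev-embedding} and the Haskins--Hein--Nordstr\"om integration by parts), bootstrap with Evans--Krylov and Schauder, pass to a subsequence, and upgrade the decay rate to $\lambda_1$ at the end. However, there is a genuine gap at the step you yourself identify as the main obstacle: the uniform $C^0$ bound. On the compact region you assert that ``standard interior elliptic theory'' suffices, ``the additive normalisation of $\phi_i$ being fixed by its known decay at infinity.'' Interior estimates for the Monge--Amp\`ere equation (and the metric equivalence $C^{-1}\omega_0\le\omega_i\le C\omega_0$) control $i\partial\bar\partial\phi_i$, not $\phi_i$; to start Evans--Krylov or Schauder you already need a $C^0$ bound on the potential, which is exactly what is at stake. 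And while each individual $\phi_i$ decays exponentially, nothing guarantees that this decay is uniform in $i$, so ``the normalisation is fixed by decay'' does not yield a bound that survives the limit $t_i\to T$; the argument as stated is circular. The same problem infects your Moser iteration on the end, since the boundary term on $\{N\le r\le N+1\}$ is supposed to be ``absorbed by the compact-part estimate'' that you have not actually obtained.

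The paper closes this gap by a specific device you are missing: it subtracts the weighted average $\phi_{ai}=\int_M\phi_ie^{-2\delta r}/\int_Me^{-2\delta r}$ and works with $u_i=\phi_i-\phi_{ai}$. Since $\Delta_{\omega_0}\phi_i=\mathrm{tr}_{\omega_0}\omega_i-\mathrm{tr}_{\omega_0}\alpha_i$ is uniformly bounded, $e^{-\delta r}\Delta_{\omega_0}\phi_i$ is uniformly in $L^2$, and Lockhart--McOwen theory in the weight $e^{-\delta r}$ --- where constants are the only harmonic functions of growth below $e^{\delta r}$ --- gives a uniform bound on $\|e^{-\delta r}u_i\|_{W^{2,2}}$. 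This is the seed for the local $W^{2,p}$, $C^{2,\alpha}$ (Evans--Krylov--Trudinger, or Chen--Wang) and $C^\infty$ bounds on $\{r\le N+1\}$, which in turn furnish the constant $C_3$ needed in the Moser iteration on the end; only after the global $L^\infty$ bound on $u_i$ is in hand does one use the decay of $\phi_i$ to bound $\phi_{ai}$, and hence $\phi_i$ itself. Your final step (upgrading the decay of $\phi_T$ from $e^{-\delta r}$ to $e^{-\lambda_1 r}$ via the indicial roots of the cylindrical Laplacian) is fine in spirit and parallels the paper's appeal to Steps 3 and 4 of \cite{HaskinsHeinNordstrom}, but the argument cannot get off the ground without the average-subtraction (or some equivalent normalisation) that makes the uniform $C^0$ estimate legitimate.
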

  \begin{proof}
  Let $\phi_{ai}=\frac{\int_{M}\phi_{i}e^{-2\delta r}}{\int_{M}e^{-2\delta r}}$ be the weighted average of $\phi_i$. By the standard Lockhart-McOwen theory \cite{LockhartMcOwen}, since constant is the only harmonic function less than $e^{\delta r}$, we can obtain a bound on $||e^{-\delta r}(\phi_i-\phi_{ai})||_{W^{2,2}}$ from the $L^2$ bound of $e^{-\delta r}\Delta_{\omega_0}\phi_i=e^{-\delta r}(\mathrm{tr}_{\omega_0}\omega_i-\mathrm{tr}_{\omega_0}\alpha_i)$.

  Let $u_i=\phi_i-\phi_{ai}$. We already obtain a bound on $||u_i||_{W^{2,2}(\{r\le N+4\})}$ and $||\Delta_{\omega_0}u_i||_{L^\infty(M)}$. So $||u_i||_{W^{2,p}(\{r\le N+3\})}$ is bounded for any $p\in(1,\infty)$ by Theorem 9.11 of \cite{GilbargTrudinger}.

  The $C^{2,\alpha}$-estimate for real Monge-Amp\`ere equation was done by Evans-Krylov-Trudinger. See Section 17.4 of \cite{GilbargTrudinger} for details. Now we are in the complex case. However, the arguments in Section 17.4 of \cite{GilbargTrudinger} still work. An alternative way to achieve the bound on $[\partial\bar\partial u_i]_{C^{\alpha}(\{r\le N+2\})}$ for all $0<\alpha<1$ was done by Theorem 1.5 of \cite{ChenWang} using the rescaling argument. Now it's standard to get a $C^{\infty}$ bound of $u_i$ on $\{r\le N+1\}$ through Schauder estimates.

  As in \cite{HaskinsHeinNordstrom},
  $$\begin{array}{lcl}& &\int_{r\ge N}|\nabla|u_i|^{\frac{p}{2}}|^2\alpha_i^2\\
  &\le&\frac{p^2}{p-1} [\int_{r\ge N}u_i|u_i|^{p-2}(\omega_i^2-\alpha_i^2)
  -\frac{1}{2}\int_{r=N}u_i|u_i|^{p-2}\mathrm{d}^{c}u_i\wedge(\omega_i+\alpha_i)].
  \end{array}$$
  Therefore, for $p\ge 2$,
  $$\int_{r\ge N}|\nabla|u_i|^{\frac{p}{2}}|^2\le Cp(\int_{r\ge N}|u_i|^{p-1}w+C_3^{p-1})\le Cp(C_3\int_{r\ge N}|u_i|^{p-1}w+C_3^p),$$
  where $C_3$ is a bound on $\sup_{\{r\le N+1\}}|u_i|$. By Young's inequality,
  $$\int_{r\ge N}|\nabla|u_i|^{\frac{p}{2}}|^2\le Cp^2(||u_i||_{L^{p-1}_w(\{r\ge N\})}^p+C_3^p)\le Cp^2(||u_i||_{L^p_w(\{r\ge N\})}^p+C_3^p).$$
  Apply Theorem \ref{Weighted-Sobolev-embedding} to $|u_i|^{p/2}$. Then
  $$||u_i||_{L^{2p}_w(\{r\ge N\})}^{2p}\le C_4p^4(||u_i||_{L^p_w(\{r\ge N\})}^{2p}+C_3^{2p}).$$
  We already know that $||u_i||_{L^{2}_w(\{r\ge N\})}\le C_5$. That's our starting point. We are going to obtain a bound on
  $||u_i||_{L^{\infty}(\{r\ge N\})}=\lim_{j\rightarrow\infty}||u_i||_{L^{2^j}_w(\{r\ge N\})}$ by Moser iteration.

  (1) If $||u_i||_{L^{2^j}_w(\{r\ge N\})}\le C_3$ for all $j\ge 1$, then $||u_i||_{L^{\infty}(\{r\ge N\})}\le C_3$.

  (2) If $||u_i||_{L^{2^j}_w(\{r\ge N\})}\le C_3$ for all $1\le j\le k$ but $||u_i||_{L^{2^j}_w(\{r\ge N\})}>C_3$ for all $j>k$, then
  $$||u_i||_{L^{\infty}(\{r\ge N\})}\le (2C_4)^{\sum_{j=k}^{\infty}2^{-j-1}}2^{\sum_{j=k}^{\infty}2^{-j+1}j}C_3\le C$$

  (3) If $||u_i||_{L^{2^j}_w(\{r\ge N\})}>C_3$ for all $j\ge 1$, then $$||u_i||_{L^{\infty}(\{r\ge N\})}\le (2C_4)^{\sum_{j=1}^{\infty}2^{-j-1}}2^{\sum_{j=1}^{\infty}2^{-j+1}j}C_5\le C$$

  The $L^\infty$ bound on $u_i=\phi_i-\phi_{ai}$ implies a bound on $\phi_{ai}$ since $\phi_i$ decay exponentially. Therefore, we actually have a $L^\infty$ bound on $\phi_i$. Then we can obtain a global $C^{\infty}$ bound as before. Finally, we can go through the Step 3 and Step 4 in \cite{HaskinsHeinNordstrom} to get the $C^{\infty}$ bound on $e^{\lambda_1r}\phi_i$. We are done by taking the limit of some subsequence of $\{\phi_i\}$.
  \end{proof}


\begin{thebibliography}{99}
  \bibitem{Anderson} Anderson, Michael T.: Ricci curvature bounds and Einstein metrics on compact manifolds. J. Amer. Math. Soc. 2 (1989), no.3, 455-490. MR0999661.
  \bibitem{AndersonTorelli} Anderson, Michael T.: The $L^2$ structure of moduli spaces of Einstein metrics on 4-manifolds. Geom. Funct. Anal. 2 (1992),  no.1, 29-89. MR1143663.
  \bibitem{AtiyahHitchin} Atiyah, Michael; Hitchin, Nigel: The geometry and dynamics of magnetic monopoles. M. B. Porter Lectures. Princeton University Press, Princeton, NJ, 1988. MR0934202.
  \bibitem{BandoKasueNakajima}  Bando, Shigetoshi; Kasue, Atsushi; Nakajima, Hiraku: On a construction of coordinate at infinity on manifolds with fast curvature decay and maximal volume growth. Invent. Math. 97 (1989), no.2, 313-349. MR1001844.
  \bibitem{Besse} Besse, Arthur L.: Einstein manifolds. 10. Springer-Verlag, Berlin, 1987. MR0867684.
  \bibitem{BiquardBoalch} Biquard, Olivier; Boalch, Philip: Wild non-abelian Hodge theory on curves. Compos. Math.  140 (2004), no.1, 179-204. MR2004129.
  \bibitem{BiquardMinerbe} Biquard, Olivier; Minerbe, Vincent: A Kummer construction for gravitational instantons. Comm. Math. Phys.  308 (2011), no.3, 773-794. MR2855540.
  \bibitem{Boalch} Boalch, Philip: Symplectic manifolds and isomonodromic deformations. Adv. Math. 163 (2001), no.2, 137-205. MR1864833.
  \bibitem{BoalchEk} Boalch, Philip: Quivers and difference Painlev\'e equations. Groups and symmetries, 25-51, CRM Proc. Lecture Notes, 47, Amer. Math. Soc., Providence, RI, 2009. MR2500553.
  \bibitem{BoalchSurvey} Boalch, Philip: Hyperkahler manifolds and nonabelian Hodge theory of (irregular) curves. Preprint, arxiv:1203.6607
  \bibitem{BurnsRapoport} Burns, Dan, Jr.; Rapoport, Michael: On the Torelli problem for k\"ahlerian K3 surfaces. Ann. Sci. \'Ecole Norm. Sup. (4) 8 (1975), no.2, 235-273. MR0447635.
  \bibitem{CheegerGromovTaylor} Cheeger, Jeff; Gromov, Mikhail; Taylor, Michael: Finite propagation speed, kernel estimates for functions of the Laplace operator, and the geometry of complete Riemannian manifolds. J. Differential Geom. 17 (1982), no.1, 15-53. MR0658471.
  \bibitem{FirstPaper} Chen, Gao; Chen, Xiuxiong: Gravitational instantons with faster than quadratic curvature decay (I). Preprint, arxiv:1505.01790
  \bibitem{SecondPaper} Chen, Gao; Chen, Xiuxiong: Gravitational instantons with faster than quadratic curvature decay (II). Preprint, arxiv:1508.07908
  \bibitem{ChenWang} Chen, Xiuxiong; Wang, Yuanqi: $C^{2,\alpha}$-estimate for Monge-Amp\`ere equations with H\"older-continuous right hand side. Ann. Global Anal. Geom. 49 (2016), no.2, 195-204. MR3464220.
  \bibitem{CherkisHitchin} Cherkis, Sergey A.; Hitchin, Nigel J.: Gravitational instantons of type $D_k$. Comm. Math. Phys. 260 (2005), no.2, 299-317. MR2177322.
  \bibitem{CherkisKapustin} Cherkis, Sergey A.; Kapustin, Anton: Singular monopoles and gravitational instantons. Comm. Math. Phys.  203 (1999), no.3, 713-728. MR1700937.
  \bibitem{CherkisKapustinALG} Cherkis, Sergey A.; Kapustin, Anton: Hyper-K\"ahler metrics from periodic monopoles. Phys. Rev. D (3) 65 (2002),  no.8. MR1899201.
  \bibitem{DemaillyPeternellSchneider} Demailly, Jean-Pierre; Peternell, Thomas; Schneider, Michael: K\"ahler manifolds with numerically effective Ricci class. Compositio Math. 89 (1993), no.2, 217-240. MR1255695.
  \bibitem{Donaldson} Donaldson, Simon K.: Two-forms on four-manifolds and elliptic equations. Inspired by S. S. Chern, 153-172, Nankai Tracts Math., 11, World Sci. Publ., Hackensack, NJ, 2006. MR2313334.
  \bibitem{FoscoloGluing} Foscolo, Lorenzo: A gluing construction for periodic monopoles. Preprint, arxiv:1411.6951
  \bibitem{FoscoloModulispace} Foscolo, Lorenzo: Deformation theory of periodic monopoles (with singularities). Comm. Math. Phys.  341 (2016),  no.1, 351-390. MR3439230.
  \bibitem{GilbargTrudinger} Gilbarg, David; Trudinger, Neil S.: Elliptic partial differential equations of second order. Reprint of the 1998 edition, Springer-Verlag, Berlin, 2001. MR1814364.
  \bibitem{HaskinsHeinNordstrom}Haskins, Mark; Hein, Hans-Joachim; Nordstr\"om, Johannes: Asymptotically cylindrical Calabi-Yau manifolds. J. Differential Geom.  101 (2015),  no.2, 213-265. MR3399097.
  \bibitem{Hein}  Hein, Hans-Joachim: Gravitational instantons from rational elliptic surfaces. J. Amer. Math. Soc. 25 (2012), no.2, 355-393. MR2869021.
  \bibitem{HitchinPage} Hitchin, Nigel J.: Twistor construction of Einstein metrics. Global Riemannian geometry (Durham, 1983), 115-125, Ellis Horwood Ser. Math. Appl., Horwood, Chichester, 1984. MR0757213.
  \bibitem{Hitchin} Hitchin, Nigel J.: The self-duality equations on a Riemann surface. Proc. London Math. Soc. (3) 55 (1987), no.1, 59-126. MR0887284.
  \bibitem{IvanovRocek} Ivanov, Ivan T.; Ro\v{c}ek, Martin: Supersymmetric $\sigma$-models, twistors, and the Atiyah-Hitchin metric. Comm. Math. Phys. 182 (1996), no.2, 291-302. MR1447294.
  \bibitem{KodairaEllipticSurface} Kodaira, Kunihiko: On compact analytic surface II. Ann. of Math. (2) 77 (1963), 563-626. MR0184257.
  \bibitem{KodairaComplexSurface} Kodaira, Kunihiko: On the structure of compact complex analytic surfaces. I. Amer. J. Math. 86 (1964), 751-798. MR0187255.
  \bibitem{KovalevSinger} Kovalev, Alexei; Singer, Michael A.: Gluing theorems for complete anti-self-dual spaces. Geom. Funct. Anal. 11 (2001), no.6, 1229-1281. MR1878320.
  \bibitem{Kronheimer1} Kronheimer, Peter B.: The construction of ALE spaces as hyper-K\"ahler quotients. J. Differential Geom. 29 (1989), no.3, 665-683. MR0992334.
  \bibitem{Kronheimer2} Kronheimer, Peter B.: A Torelli-type theorem for gravitational instantons. J. Differential Geom. 29 (1989), no.3, 685-697. MR0992335.
  \bibitem{LindstromRocek} Lindstr\"om, Ulf; Ro\v{c}ek, Martin: New hyper-K\"ahler metrics and new supermultiplets. Comm. Math. Phys.  115  (1988),  no. 1, 21-29. MR0929144.
  \bibitem{LockhartMcOwen} Lockhart, Robert B.; McOwen, Robert C.: Elliptic differential operators on noncompact manifolds. Ann. Scuola Norm. Sup. Pisa Cl. Sci. (4) 12 (1985), no.3, 409-447. MR0837256.
  \bibitem{LooijengaPeters} Looijenga, Eduard; Peters, Chris: Torelli theorems for K\"ahler K3 surfaces. Compositio Math.  42 (1980/81), no.2, 145-186. MR0596874.
  \bibitem{Melrose} Melrose, Richard B.: The Atiyah-Patodi-Singer index theorem. Research Notes in Mathematics, 4. A K Peters, Ltd., Wellesley, MA, 1993. MR1348401.
  \bibitem{Minerbe} Minerbe, Vincent: Rigidity for multi-Taub-NUT metrics. J. Reine Angew. Math. 656 (2011), 47-58. MR2818855.
  \bibitem{Naruki} Naruki, Isao: Configurations related to maximal rational elliptic surfaces.  Complex analytic singularities, 315-347, Adv. Stud. Pure Math., 8, North-Holland, Amsterdam, 1987. MR0894300.
  \bibitem{OkamotoI} Okamoto, Kazuo: Studies on the Painlev\'e equations. I. Ann. Mat. Pura Appl. (4) 146 (1987), 337-381. MR0916698.
  \bibitem{OkamotoII} Okamoto, Kazuo: Studies on the Painlev\'e equations. II. Japan. J. Math. (N.S.) 13 (1987),  no.1, 47-76. MR0914314.
  \bibitem{OkamotoIII} Okamoto, Kazuo: Studies on the Painlev\'e equations. III. Math. Ann. 275 (1986), no.2, 221-255. MR0854008.
  \bibitem{OkamotoIV} Okamoto, Kazuo: Studies on the Painlev\'e equations. IV. Funkcial. Ekvac. 30 (1987), no.2-3, 305-332. MR0927186.
  \bibitem{Page} Page, Don N.: A periodic but nonstationary gravitational instanton. Phys. Lett. B 100 (1981), no.4, 313-315. MR0609994.
  \bibitem{Ruan} Ruan, Wei-Dong: On the convergence and collapsing of K\"ahler metrics. J. Differential Geom. 52 (1999),  no.1, 1-40. MR1743466.
  \bibitem{RuanZhang} Ruan, Wei-Dong; Zhang, Yuguang: Convergence of Calabi-Yau manifolds. Adv. Math. 228 (2011), no.3, 1543-1589. MR2824563.
  \bibitem{Sakai} Sakai, Hidetaka: Rational surfaces associated with affine root systems and geometry of the Painlev\'e equations. Comm. Math. Phys. 220 (2001), no.1, 165-229. MR1882403.
  \bibitem{SchoenYau} Schoen, Richard M.; Yau, Shing-Tung: Lectures on differential geometry. Conference Proceedings and Lecture Notes in Geometry and Topology, I. International Press, Cambridge, MA, 1994. MR1333601.
  \bibitem{SchulzTammaro} Schulz, Michael B.; Tammaro, Elliott F.: M-theory/type IIA duality and K3 in the Gibbons-Hawking approximation. Preprint, arxiv:1206.1070
  \bibitem{Siu} Siu, Yum Tong: A simple proof of the surjectivity of the period map of K3 surfaces. Manuscripta Math. 35 (1981), no.3, 311-321. MR0636458.
  \bibitem{TianYau} Tian, Gang; Yau, Shing-Tung: Complete K\"ahler manifolds with zero Ricci curvature. I. J. Amer. Math. Soc. 3  (1990),  no.3, 579-609. MR1040196.
  \bibitem{Todorov} Todorov, Andrei N.: Applications of the K\"ahler-Einstein-Calabi-Yau metric to moduli of K3  surfaces. Invent. Math. 61 (1980), no.3, 251-265. MR0592693.
  \bibitem{Tosatti} Tosatti, Valentino: Limits of Calabi-Yau metrics when the K\"ahler class degenerates. J. Eur. Math. Soc. 11 (2009), no.4, 755-776. MR2538503.
  \bibitem{Yau} Yau, Shing Tung: The role of partial differential equations in differential geometry. Proceedings of the International Congress of Mathematicians (Helsinki, 1978), pp.237-250, Acad. Sci. Fennica, Helsinki, 1980. MR0562611.
\end{thebibliography}
\end{document}